\documentclass[11pt, oneside]{article}   	
\usepackage{geometry}               		
\geometry{letterpaper, margin=1in}                   		
\usepackage{graphicx}				
\setlength{\parindent}{0cm}

\usepackage{amssymb}
\usepackage{fancyhdr}
\usepackage{amsmath}
\usepackage{mathtools}
\usepackage{changepage}
\usepackage{dsfont}
\usepackage{bm}
\usepackage{amsthm}
\usepackage{subcaption}
\usepackage{float}
\usepackage{color}
\usepackage{enumerate} 
\usepackage{authblk}
\usepackage{braket}
\usepackage{algorithm}
\usepackage{algpseudocode}
\usepackage{hyperref}

\fancyhf{}

\usepackage{tikz}
\usetikzlibrary{shapes}
\usetikzlibrary{topaths}
\usetikzlibrary{patterns}
\usetikzlibrary{decorations.pathmorphing}

\def\ve#1{\mathchoice{\mbox{\boldmath$\displaystyle\bf#1$}}
{\mbox{\boldmath$\textstyle\bf#1$}}
{\mbox{\boldmath$\scriptstyle\bf#1$}}
{\mbox{\boldmath$\scriptscriptstyle\bf#1$}}}

\DeclareMathOperator{\supp}{supp}
\newcommand{\R}{\mathds{R}}
\newcommand{\Z}{\mathds{Z}}
\newcommand{\pol}{\{\bm{x}\in\R^n:A\bm{x}=\bm{b},\, B\bm{x}\leq \bm{d}\}}

\newcommand{\mc}{\mathcal{C}}
\newcommand{\B}{\mathsf{B}}
\newcommand{\me}{\mathcal{E}}
\newcommand{\mcp}{\mathcal{P}}

\newcommand\vecc{{\ve c}}

\newcommand\vex{{\ve x}}

\DeclareMathOperator{\fmatOp}{FMAT}
\newcommand{\fmat}{{\fmatOp}}

\newcommand{\T}{{\intercal}}

\newtheorem{lem}{Lemma}
\newtheorem{claim}{Claim}
\newtheorem{thm}{Theorem}
\newtheorem{cor}{Corollary}
\newcommand{\C}{\mathcal{C}}
\newtheorem{defn}{Definition}

\newtheorem{prop}{Proposition}

\DeclareMathOperator{\lcm}{lcm}

\DeclareMathOperator{\rk}{rk}

\pagestyle{fancy}
\cfoot{\thepage}

\title{Pivot Rules for Circuit-Augmentation \\
Algorithms  in Linear Optimization}

 \author{Jes\'us A. De Loera$^{*}$, Sean Kafer$^{\star}$, Laura Sanit\`a$^{\star}$}

\date{}							

 \affil{$^{*}$ University of California, Davis, USA. \texttt{deloera@math.ucdavis.edu}\\
 $^{\star}$ Department of Combinatorics and Optimization, University of Waterloo, 
 Canada. \texttt{\{skafer, lsanita\}@uwaterloo.ca}}

\begin{document}
\maketitle

\begin{abstract}
Circuit-augmentation algorithms are generalizations of the Simplex method,
where in each step one is allowed to move along a fixed set of directions, called circuits, 
that is a superset of the edges of a polytope. We show that in the circuit-augmentation 
framework the greatest-improvement and Dantzig pivot rules are NP-hard, already for 0/1-LPs. 
 Differently, the steepest-descent pivot rule can be carried out in polynomial time in the 0/1 setting, 
 and the number of circuit augmentations required to reach an optimal solution according to this rule 
 is strongly-polynomial for 0/1-LPs. 
 
 The number of circuit augmentations has been of interest
 as a proxy for  the number of steps in the Simplex method, and the circuit-diameter of polyhedra 
 has been studied as a lower bound to the combinatorial diameter of polyhedra. Extending prior results, 
 we show that for any polyhedron $P$ the circuit-diameter is bounded by a polynomial in the input bit-size of $P$. 
 This is in contrast with the best bounds for the combinatorial diameter of polyhedra.

Interestingly, we show that the circuit-augmentation framework can be exploited  to make novel conclusions 
about the classical Simplex method itself: In particular, as a byproduct of our circuit results, we prove that 
(i) computing the shortest (monotone) path to an optimal solution on the 1-skeleton of a polytope is NP-hard, 
and hard to approximate within a factor better than 2, and (ii) for $0/1$ polytopes, a monotone path of 
strongly-polynomial length can be constructed using steepest improving edges.
\end{abstract}

\section{Introduction}
Linear Programming (LP) is one of the most powerful mathematical tools for tackling optimization problems.
While various algorithms have been proposed for solving LPs in the past decades, probably the most 
popular method remains the \emph{Simplex method}, introduced by G. B. Dantzig in the 1940's. In what 
follows we assume the input LP is given in the general minimization format 

\begin{equation}\label{lp}
\min\set{\,\bm c^\T\bm x\ :\, A \bm x=\bm b,\, B \bm x\leq \bm d, \, \bm x\in\R^n\,}.
\end{equation}

Here the objective function is minimized over the polyhedron $\mcp=\pol$ of feasible solutions. 
The $1$-skeleton of the polyhedron $\mcp$ is the graph given by the $0$-dimensional faces (vertices) 
and $1$-dimensional faces (edges) of $\mcp$. Given an objective function to minimize on $\mcp$, a path 
on the 1-skeleton is called monotone if every vertex on the path has an associated objective function value 
larger than its subsequent one. The Simplex method starts with  an initial feasible extreme point solution of 
the LP, and in each step, it moves along an improving edge to an adjacent extreme point, until an optimal solution 
is found or unboundedness is detected. Effectively there is a \emph{monotone} path from an initial extreme 
point solution of an LP to an optimal one. 

Despite having been used and studied for more than 70 years, it is still unknown whether there is a rule for 
selecting an improving neighbor extreme point, called a \emph{pivot rule}, that guarantees a polynomial 
upper bound on the number of steps performed by the Simplex method. 
In fact, a longstanding open question in the theory of optimization \cite{terlaky+zhang} states:
{\em is there a version of the Simplex method, i.e., a choice of pivot rule, that can solve LPs in strongly-polynomial time}? 

We contribute the following new results about the behavior of the Simplex method. First, we give a partial answer to the above question for
linear programs whose feasible region  $\mcp$ is a 0/1 polytope (in the rest of the paper we will often refer to them as \emph{0/1-LPs}). 
We are able to exploit their \emph{steepest edges}: At a given vertex $\bm{x}$, a steepest edge is given by the edge minimizing $\frac{\bm c^\T \bm g}{||\bm{g}||_1}$ among all the edges $\bm g$ incident at $\bm{x}$. A steepest-descent pivot rule asks to iteratively select a steepest edge as the improving direction.

\begin{thm}
\label{thm:main1} 

Given an LP in the general form~\emph{(\ref{lp})} whose feasible region 
 $\mcp$ is a 0/1 polytope, then
 \begin{enumerate}[(i)] 
\item One can construct a monotone path between any vertex and the optimum via steepest edges, whose length is strongly-polynomial in the input size of the LP. 
\item When the feasible region $\mcp$ is a non-degenerate 0/1 polytope, the
  Simplex method with a steepest-descent pivot rule reaches an optimal solution in strongly-polynomial time. 
\end{enumerate}

\end{thm}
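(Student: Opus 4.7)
My plan is to combine the strongly-polynomial bound on steepest-descent circuit-augmentation for $0/1$-LPs (established earlier in the paper) with a structural lemma linking steepest edges to steepest circuits in the $0/1$ setting. The key lemma states: at any vertex $\bm x$ of a $0/1$ polytope $\mcp$, every improving feasible direction $\bm d$ admits a \emph{conformal} decomposition $\bm d = \sum_i \lambda_i \bm g_i$ into edges $\bm g_i$ incident to $\bm x$, meaning $\lambda_i \ge 0$ and $\|\bm d\|_1 = \sum_i \lambda_i \|\bm g_i\|_1$. This conformality follows because at any $0/1$ vertex every feasible direction and every edge direction share the same coordinate-wise sign pattern (the $j$-th coordinate must be nonnegative when $x_j = 0$ and nonpositive when $x_j = 1$), and edges of a $0/1$ polytope lie in $\{-1,0,1\}^n$. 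A direct consequence is that the slope $\bm c^\T \bm d / \|\bm d\|_1$ is a weighted average of edge slopes, so the steepest improving edge at $\bm x$ has slope at least as steep as any improving feasible direction, including the steepest improving circuit.

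For part (i), I would transfer the circuit-augmentation bound to edges. The steepest-descent circuit algorithm produces a strongly-polynomially-long sequence of moves converging to the optimum. At each vertex along this sequence, by the lemma, the steepest edge matches the slope of the steepest circuit. Replacing each circuit step by a short monotone sub-path of steepest-edge pivots---of total $\ell_1$-length at most $n$, the $\ell_1$-diameter of a $0/1$ polytope---yields a monotone path on the $1$-skeleton built from steepest edges whose total length is still strongly polynomial.

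For part (ii), non-degeneracy guarantees that every steepest-edge pivot moves to a distinct adjacent vertex with strictly decreasing objective (no zero-length degenerate pivots). The Simplex method with the steepest-descent pivot rule therefore traces out a monotone path of the kind constructed in part (i), giving a strongly-polynomial iteration count. At each vertex, identifying the steepest edge requires evaluating the slope of at most $n$ incident edges (by non-degeneracy), a polynomial-time operation; hence the total running time is strongly polynomial.

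The main obstacle is the structural lemma: establishing the conformal decomposition for \emph{all} improving feasible directions (not just those pointing toward neighboring vertices) requires combining the conic-hull description of the feasible-direction cone at $\bm x$ with the sign-pattern observation specific to $0/1$ polytopes. A secondary delicate point is the transfer from circuit iterations to edge iterations, where one must show that each replacement sub-path consists of genuine steepest-edge pivots at their respective starting vertices without blowing up the iteration count.
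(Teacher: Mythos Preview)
Your structural lemma is correct and is essentially the content of the paper's Lemma~\ref{lem:0/1_steep_is_edge}: at a $0/1$ vertex the feasible cone lies in a single orthant, so any feasible direction decomposes conformally into edge directions, and the steepest improving circuit coincides with the steepest improving edge. But the lemma is stronger than you use it: since a maximal augmentation along an edge from a vertex lands at an adjacent vertex, the steepest-descent \emph{circuit} algorithm, started at a vertex, already walks along the $1$-skeleton following steepest edges. Your ``replacement sub-path'' construction and the ``secondary delicate point'' you flag are therefore unnecessary.

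The real gap is your starting assumption. You invoke ``the strongly-polynomial bound on steepest-descent circuit-augmentation for $0/1$-LPs established earlier in the paper'', but no such bound is independently available: Lemma~\ref{lem:approximation} and Theorem~\ref{thm:steep_aprrox_deep} give only \emph{weakly} polynomial bounds, depending on $\log\big(\bm c^\T(\bm x_0-\bm x_{\min})\big)$, which can be arbitrarily large relative to $n$ and the constraint matrix. The strongly-polynomial statement is Theorem~\ref{thm:simplex_0/1}, which \emph{is} the paper's proof of Theorem~\ref{thm:main1}, so you are assuming what you need to prove. The missing ingredient is the Frank--Tardos preprocessing (Lemma~\ref{lem:replace_obj}): replace $\bm c$ by an integer vector $\bm c'$ with $\log\|\bm c'\|_\infty$ polynomial in $n$, argue that $\bm c'$ induces the same steepest-edge choice at every vertex (this requires observing, as the paper does, that the LP describing $Steep(\mcp,\bm x,\cdot)$ has coefficient size bounded in terms of $n$ alone for a $0/1$ polytope), and only then apply the approximate-greatest-improvement bound to $\bm c'$. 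Without this step the argument is circular.
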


In addition, we consider the following legitimate question: \emph{can one hope to find a pivot rule
that makes the Simplex method use the shortest monotone path?} We here show a negative answer. 

\begin{thm}
\label{thm:main2} Given an LP and an initial feasible solution, finding the shortest (monotone) path to an optimal solution
is NP-hard. Furthermore, unless P=NP, it is hard to approximate within a factor strictly better than two.
\end{thm}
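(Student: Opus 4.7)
The plan is to establish NP-hardness and the factor-$2$ inapproximability simultaneously via a single gap-preserving reduction from a classical NP-hard problem. Because the inapproximability bound is claimed assuming only P$\ne$NP (rather than a stronger hypothesis such as the Unique Games Conjecture), the factor-$2$ gap must be engineered directly in the construction rather than inherited from a known inapproximability threshold. A natural source problem is $3$-SAT, or a covering variant such as Set Cover with bounded element frequency, together with a PCP-style amplification.

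First, I would build a polytope $\mcp$, an objective $\bm c$, and an initial vertex $\bm v_0$ so that the unique optimal vertex $\bm v^{\star}$ is a designated ``sink,'' and the $1$-skeleton of $\mcp$ restricted to monotone paths from $\bm v_0$ to $\bm v^{\star}$ faithfully encodes the source instance. The construction would use one gadget per clause (or ground element); each gadget admits either a \emph{short} traversal, of length roughly $\ell$, when the corresponding clause is satisfied, or a \emph{long} traversal, of length roughly $2\ell$, forced when it is not. Stringing the gadgets together so that every monotone $\bm v_0$-$\bm v^{\star}$ path is obliged to traverse each gadget exactly once, the total path length becomes a linear function of the number of satisfied versus unsatisfied clauses.

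For completeness, a satisfying assignment yields a monotone path that selects the short traversal in every gadget, of total length $L$. For soundness, any monotone path must use the long traversal at least once per unsatisfied clause; if the instance is one in which at least an $\varepsilon$-fraction of the clauses must be left unsatisfied (provided by a PCP-based gap instance of MAX-$3$-SAT, or built in directly by the gadget), every monotone path has length at least $(2 - o(1))L$. Taken together, these two directions give both NP-hardness and the factor-$(2 - \varepsilon)$ inapproximability.

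The main technical obstacle is the adjacency analysis: I must ensure the intended short and long traversals consist of genuine $1$-dimensional faces of $\mcp$, by invoking the standard criterion that two $0/1$ vertices $\bm x, \bm y$ are adjacent iff no other vertex of $\mcp$ lies in $\operatorname{relint} \conv(\bm x, \bm y)$. Equally delicate is choosing $\bm c$ so that exactly the intended traversals are monotone, while ruling out unintended ``shortcut'' monotone edges that would bypass a gadget and collapse the factor-$2$ gap. Once a single gadget with these properties has been designed and verified, chaining the gadgets together and accounting for total path length is routine combinatorial bookkeeping.
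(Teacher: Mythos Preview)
Your plan has a quantitative error that undermines the factor-$2$ conclusion. If in the NO case only an $\varepsilon$-fraction of the $m$ clauses must be left unsatisfied, then the shortest monotone path has length at most $m\ell + \varepsilon m\ell = (1+\varepsilon)L$, not $(2-o(1))L$: each unsatisfied clause converts one short traversal (length $\ell$) into one long traversal (length $2\ell$), contributing only an additive $\ell$. A PCP gap for MAX-$3$-SAT gives $\varepsilon$ bounded away from $1$, so your construction would yield at best a constant inapproximability factor of $1+\varepsilon<2$. To reach factor $2$ along these lines you would need the NO case to force \emph{almost every} gadget into its long traversal, which is a far stronger soundness property than any standard gap instance supplies. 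Beyond this, the proposal remains a high-level sketch: the gadget itself, the adjacency verification, and the exclusion of shortcut edges are all deferred, and each is genuinely nontrivial for an arbitrary hand-built $0/1$ polytope.

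The paper sidesteps all of this with a direct and much shorter argument. The reduction from directed Hamiltonian path to the bipartite-matching polytope, already constructed for Theorem~\ref{thm:oracle2_np_hard}, produces an LP and a starting vertex $\chi^M$ such that an optimal vertex is \emph{adjacent} to $\chi^M$ on the $1$-skeleton if and only if the original digraph has a Hamiltonian $s,t$-path (Corollary~\ref{cor:bestneighborhard}). Hence the shortest monotone path has length exactly $1$ in the YES case and at least $2$ in the NO case. This $1$-versus-$2$ dichotomy immediately gives both NP-hardness and inapproximability within any factor below $2$, with no PCP machinery, no gadget chaining, and no ad hoc adjacency analysis\textemdash the edge structure of the matching polytope is classical.
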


The above theorem implies that for \emph{any} efficiently-computable pivoting rule, an edge-augmentation algorithm 
(like the Simplex method) cannot be guaranteed to reach an optimal solution via a minimum number of augmentations 
(i.e., non-degenerate pivots), unless P=NP.  We remark that these hardness results require the use of degeneracy. 
  The same theorem for non-degenerate polytopes would be a stronger result, and is left as an open question.

\smallskip
The methods we used to prove these two theorems are of independent interest. We arrived to the results above 
while thinking about a much more general family of algorithms that includes the Simplex method.
\emph{Circuit-augmentation} algorithms are extensions of the Simplex method where we have many more choices of 
improving directions available at each step\textemdash more than just the edges of the polyhedron. Our results 
are valid in this more general family of circuit-augmentation algorithms which we now introduce to the reader.

\smallskip
Given a polyhedron, its \emph{circuits} are all potential edges that can arise by translating some of its facets. Circuits are important 
not just in the development of linear optimization \cite{blandthesis,rocka}, but they appear very naturally in other areas of application 
where polyhedra need to be decomposed \cite{muller+regensburger}. Formally:

\begin{defn}
Given a polyhedron of the form $\mcp=\pol$, a non-zero vector $\bm g \in \R^n$ is a circuit if 
\begin{enumerate}[(i)]
	\item $\bm g \in \ker(A)$, and
	\item $B\bm g$ is support-minimal in the collection $\{B \bm y : \bm y \in \ker(A), \bm y \neq 0 \}$.
\end{enumerate}
\end{defn}
Here $\ker(A)$ denotes the kernel of the matrix $A$. As we will see later, from a geometric perspective the circuits can capture the extreme rays of a cone containing any improving direction, and allow one to represent any augmentation as a sum of at most $n$ circuit-augmentations.

To represent the circuits with a finite set, we can normalize them in various ways. 
Following \cite{doi:10.1137/140976868,Borgwardt+Viss,DL15,finhold}, we denote by 
$\mc(A,B)$ the (finite) set of circuits with co-prime integer components. 

\smallskip
Given an initial feasible point of an LP, a circuit-augmentation algorithm at each iteration moves maximally along an improving 
circuit-direction until an optimal solution is found (or unboundedness is detected). Circuits and circuit-augmentation algorithms 
have appeared in several papers and books on linear and integer optimization (see 
\cite{blandflorida,blandedmonds,manycircuitdiameters,doi:10.1137/140976868,Borgwardt+Viss,DHKbook,DL15,GAUTHIERetal2014,Hemmecke+Onn+Weismantel:oracle,doi:10.1137/17M1152115,onn:nonlinear-discrete-monograph,rocka,st96} 
and the many references therein). In particular, the authors of \cite{DL15} considered linear programs in equality form and analyzed in detail three circuit-pivot rules that
guarantee notable bounds on the number of steps performed by a circuit-augmentation algorithm to reach 
an optimal solution.

Given a feasible point $\bm{x} \in \mcp$, the proposed rules are as follows:
\begin{itemize}
\item[(i)] \emph{Greatest-improvement circuit-pivot rule:} select a circuit $\bm{g} \in \mc(A,B)$ 
that maximizes the objective improvement $-\bm c^\T(\alpha\bm g)$, among all circuits $\bm{g}$ and $\alpha \in \mathbb R_{>0}$ such that
$\bm x+\alpha\bm g\in \mcp$.  
\item[(ii)] \emph{Dantzig circuit-pivot rule:} select a circuit $\bm{g} \in \mc(A,B)$ 
that maximizes $-\bm c^\T\bm g$,  among all circuits $\bm{g} $ such that
$\bm x+\varepsilon \bm g\in \mcp$ for some $\varepsilon>0$. 
\item[(iii)] \emph{Steepest-descent circuit-pivot rule:} select a circuit $\bm{g} \in \mc(A,B)$ that 
maximizes $-\frac{\bm c^\T\bm g}{||\bm g||_1}$,  among all circuits $\bm{g}$ such that
$\bm x+\varepsilon \bm g\in \mcp$ for some $\varepsilon>0$.  
\end{itemize}

Note that these circuit-pivot rules are direct extensions to three famous pivot rules proposed 
for the Simplex method. Unfortunately,  
the Simplex method can require an exponential number of edge steps before reaching an optimal solution \cite{GOLDFARB+sit1979,JEROSLOW1973,kleeminty}. 
When all circuits are considered as possible directions to move,  much better bounds can be given. Most notably, 
the greatest-improvement circuit-pivot rule guarantees a \emph{polynomial} bound on the number of steps 
performed by a circuit-augmentation algorithm on LPs in equality form (see  
\cite{DL15,Hemmecke+Onn+Weismantel:oracle,onn:nonlinear-discrete-monograph} and references therein). 
However,  the set of circuits in general can have an exponential cardinality, and therefore selecting the best 
circuit according to the previously mentioned rules is not an easy optimization problem.  
Indeed, the central questions of this paper are the following: 
 
 \begin{itemize}
 \item[$\bullet$]\emph{How hard is it to carry out these three circuit-pivot rules over the exponentially-large set of circuits?}
 \item[$\bullet$] \emph{Can we exploit (approximate) solutions to these circuit-pivot rules to design 
  (strongly-) polynomial time augmentation algorithms?}
 \end{itemize}

  We now describe our results and how they lead to Theorem~\ref{thm:main1} and Theorem~\ref{thm:main2}.
  
\paragraph{Hardness of circuit-augmentation.}  First we settle the computational complexity of the circuit-pivot rules (i) and (ii).

\begin{thm}
\label{thm:hardness}
The greatest-improvement and  Dantzig circuit-pivot rules are NP-hard. 
\end{thm}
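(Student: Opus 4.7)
The plan is to establish NP-hardness for each of the two pivot rules by polynomial-time reductions from a canonical NP-hard problem, aiming (in line with the abstract) for hardness even when $\mcp$ is the feasible region of a 0/1-LP. Natural source problems are SUBSET SUM, INDEPENDENT SET, or MAX-CUT, each of which admits a compact linear-constraint description suitable for embedding into an instance of~(\ref{lp}).

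For the greatest-improvement rule I would build in polynomial time from the source instance an explicit choice of $A$, $\bm b$, $B$, $\bm d$, $\bm c$ together with a feasible extreme point $\bm x^0$, with the property that the circuits $\bm g\in\mc(A,B)$ feasible at $\bm x^0$ (those with $\bm x^0+\varepsilon\bm g\in\mcp$ for some $\varepsilon>0$) are in bijection with feasible solutions of the source problem, and so that the quantity $-\bm c^\T(\alpha\bm g)$, where $\alpha$ is the maximum feasible step, reproduces the source objective of the corresponding solution. An oracle returning the maximum-improvement circuit then returns the optimal combinatorial solution, establishing NP-hardness.

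For the Dantzig rule I would use a parallel construction, but engineered so that $-\bm c^\T\bm g$ itself (rather than $-\bm c^\T(\alpha\bm g)$) encodes the source objective. The subtlety is that circuits in $\mc(A,B)$ are normalized to have coprime integer components, so candidate circuits encoding different combinatorial solutions may have different integer scales, distorting the linear ordering by $-\bm c^\T\bm g$. My plan is to design $A$ so that every circuit arising from a candidate solution is already a $\{0,1\}$-vector (or uniformly scaled integer vector) in its coprime representative form, making the $-\bm c^\T\bm g$ values directly comparable to, and proportional to, the source objective.

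The main obstacle will be controlling the full set $\mc(A,B)$: the support-minimality condition in the definition of a circuit is global, and one must rule out spurious support-minimal kernel vectors outside the intended family that could outscore the circuit encoding the true combinatorial optimum. I would address this by padding the construction with auxiliary variables and facets whose effect is to force any support-minimal element of $\ker(A)$ to decompose along one of the intended encoding directions, and by weighting $\bm c$ so that any improvement gained along an auxiliary direction is strictly dominated by improvements along the encoding circuits. Verifying these two properties — bijection with combinatorial solutions, and absence of improving spurious circuits — is where the bulk of the technical work would lie.
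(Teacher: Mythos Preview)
Your proposal correctly identifies the overall strategy (a polynomial-time reduction from an NP-hard problem) and, more importantly, correctly isolates the two real technical obstacles: (a) controlling the \emph{entire} set $\mc(A,B)$ so that no unintended support-minimal kernel vector outscores the intended ones, and (b) for the Dantzig rule, ensuring that the coprime-integer normalization does not distort the comparison $-\bm c^\T\bm g$ across candidate solutions. However, the proposal does not actually resolve either obstacle: ``padding with auxiliary variables and facets'' and ``weighting $\bm c$'' are stated as intentions, not constructions, and there is no argument that such padding can be done while keeping $\mcp$ a 0/1 polytope and without creating new spurious circuits. As written, this is a plan for a proof rather than a proof, and the hard part---exhibiting a concrete $(A,B,\bm b,\bm d,\bm c,\bm x^0)$ and verifying the circuit structure---is entirely deferred.

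The paper takes a route that sidesteps both obstacles simultaneously by working inside a polytope whose circuits are \emph{already completely characterized combinatorially}: the (fractional) matching polytope. It first proves that the circuits of $\mcp_\fmat(G)$ are exactly the alternating $\pm1$ (and $\pm2$) vectors supported on five simple graph families; on a bipartite $G$ this collapses to $\pm1$ vectors on even cycles and simple paths. This kills obstacle (b) for free---every circuit is a $\{-1,0,1\}$ vector, so the coprime normalization is uniform---and reduces obstacle (a) to a graph-theoretic statement. The reduction is then from directed Hamiltonian $s,t$-path: a gadget replaces each node $v$ by an edge $v_av_b$, producing a bipartite $H$ and a matching $M$ such that improving circuits at $\chi^M$ are $M$-alternating paths, and the greatest-improvement/Dantzig optimum attains a specific threshold value if and only if the path is Hamiltonian. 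Because $\chi^M+\bm g\in\mcp$ with step exactly $1$ for every circuit $\bm g$, the Dantzig and greatest-improvement objectives coincide, so one construction proves both hardness results at once. If you pursue your approach, the key missing ingredient is precisely this: a polytope with a provably complete, uniformly-scaled circuit description.
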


We prove this theorem by showing that computing a circuit, according to both the greatest-improvement 
and the Dantzig circuit-pivot rule, is already hard to do when $\mcp$ is a $0/1$ polytope. In particular, we focus on the case
when $\mcp$ is the matching polytope of a bipartite graph. We first characterize the circuits of the more general 
\emph{fractional} matching polytope, i.e., the  polytope given by the standard LP-relaxation for the matching problem 
on general graphs,  in Section~\ref{sec:frac_matching}. This builds on the known graphical characterization of adjacency 
given in \cite{Sanita18,Behrend13}. Then, we construct a reduction from the NP-hard Hamiltonian path problem in 
Section~\ref{sec:hardness}. The heart of the reduction yields the following interesting corollary.

\begin{cor}
\label{cor:bestneighborhard}
Given a feasible extreme point solution of the bipartite matching polytope and an objective function, it is NP-hard to 
decide whether there is a neighbor extreme point that is optimal. 
 \end{cor}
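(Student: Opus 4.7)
The plan is to reduce from the NP-hard Hamiltonian $s$--$t$ path problem. Given an instance $(H, s, t)$ with $V(H) = \{1, \ldots, n\}$, I construct a bipartite graph $G$ on left vertices $l_0, l_1, \ldots, l_n$ and right vertices $r_0, r_1, \ldots, r_n$, whose edges are: the \emph{diagonal} edges $l_i r_i$ for each $0 \leq i \leq n$; the two \emph{endpoint} edges $l_0 r_s$ and $l_t r_0$; and, for each $\{i, j\} \in E(H)$, the two \emph{bipartite-double} edges $l_i r_j$ and $l_j r_i$. The starting extreme point $M$ is the perfect matching consisting of all diagonal edges. I set costs $c(l_i r_i) = 0$, $c(l_0 r_s) = c(l_t r_0) = -W$ for a large constant $W > n$, and $c(l_i r_j) = -1$ on the bipartite-double edges.

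For $W > n$, every optimal matching $M^*$ must be perfect, must use both endpoint edges, and must include the maximum possible number of bipartite-double edges. This maximum equals $n - 1$ exactly when there is a bijection $f : V(H) \setminus \{t\} \to V(H) \setminus \{s\}$ with $\{v, f(v)\} \in E(H)$ for all $v$; if no such $f$ exists, bipartite matching detects this in polynomial time and the Hamiltonian-path instance is rejected outright. When $f$ exists, the associated functional digraph $v \mapsto f(v)$ decomposes into a single directed $s$--$t$ path together with a (possibly empty) vertex-disjoint collection of directed cycles, since $s$ has in-degree $0$ and $t$ has out-degree $0$ in this digraph.

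Since $M$ and $M^*$ are both perfect matchings of $G$, $M \triangle M^*$ is a disjoint union of alternating cycles; tracing the alternation in $G$ shows that the component containing $l_0$ and $r_0$ corresponds exactly to the $s$--$t$ path of $f$, while each directed cycle of $f$ contributes a separate alternating cycle. By the known characterization of adjacency on the bipartite matching polytope (cf.\ \cite{Sanita18,Behrend13}), $M^*$ is a neighbor of $M$ iff $M \triangle M^*$ is a single alternating cycle, which happens iff $f$ induces a Hamiltonian $s$--$t$ path in $H$. Hence the existence of an optimal neighbor extreme point of $M$ is equivalent to the existence of a Hamiltonian $s$--$t$ path in $H$, proving NP-hardness. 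The main obstacle is the cost-tuning argument forcing every optimal $M^*$ to have the prescribed shape, and the careful verification that the connected-component structure of $M \triangle M^*$ matches the cycle decomposition of $f$'s functional digraph; both are essentially combinatorial bookkeeping once the construction is laid out.
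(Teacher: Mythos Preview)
Your proposal is correct and follows essentially the same approach as the paper. Both reduce from Hamiltonian path by building a bipartite graph in which each vertex of the original graph is split in two, the starting matching pairs up the two copies, and the symmetric difference with an optimal matching traces out the candidate Hamiltonian path; the large weight $W$ forces the two ``endpoint'' edges into every optimum, and adjacency on the matching polytope is then equivalent to connectivity of the symmetric difference. The differences are cosmetic: the paper reduces from the \emph{directed} Hamiltonian path problem (so each vertex gets an ``in'' and an ``out'' copy, and the symmetric difference is a single alternating \emph{path} through leaf nodes $s',t'$), whereas you reduce from the undirected version via the bipartite double cover (so the symmetric difference is a single alternating \emph{cycle} through $l_0,r_0$). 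The paper also routes the argument through its circuit framework---first showing that the greatest-improvement circuit at $\chi^M$ is always an edge-direction, then deducing the corollary---while you argue directly about neighbors; your route is a bit more self-contained for this particular statement. Both proofs need the same ``without loss of generality the optimum equals $-2W-(n-1)$'' preprocessing step that you make explicit and the paper handles in its proof of Theorem~\ref{thm:main2}.
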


With the above corollary, the hardness results stated in Theorem~\ref{thm:main2} can be easily derived. Even more, 
combining this corollary with the characterization of circuits for the fractional matching polytope, we can show that 
the hardness results of Theorem~\ref{thm:main2} hold more generally for \emph{circuit}-paths, 
i.e., paths constructed by a circuit-augmentation algorithm (see Corollary~\ref{cor:cir}).

\paragraph{Approximating pivoting rules.}
We next make a very useful observation: Any polynomial-time $\gamma$-approximation algorithm for the circuit-pivot 
rule optimization problems yields an increase of at most a $\gamma$-factor on the running time of the corresponding 
circuit-augmentation algorithm -- this follows from an extension of the analysis given by \cite{DL15}. This simple 
observation turns out to be quite powerful in combination with the greatest-improvement circuit-pivot rule, and it plays 
a key role in our subsequent results. We therefore formally state its main implication in the next lemma.

\begin{lem}\label{lem:approximation}
Consider an LP in the general form~\emph{(\ref{lp})}. Denote by $\delta$  the maximum absolute value of 
the determinant of any $n \times n$ submatrix of $\binom{A}{B}$. Let $\bm x_0$ be an initial feasible solution,  
and let $\gamma\geq 1$. Using a $\gamma$-approximate greatest-improvement circuit-pivot rule, we can reach an optimal solution 
$\bm x_{\min}$ of \emph{(\ref{lp})} with $O\big(n\gamma\log\big( \delta \,\bm c^\T (\bm x_0-\bm x_{\min}) \big)\big)$ 
 augmentations.
\end{lem}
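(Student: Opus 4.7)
The plan is to extend the standard analysis of exact greatest-improvement circuit-augmentation (as given in \cite{DL15}) by folding the approximation factor $\gamma$ into the multiplicative progress inequality, then combine this with a denominator bound controlled by $\delta$ to conclude termination.

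First I would invoke the conformal (sign-compatible) circuit decomposition from Rockafellar-style results \cite{rocka}: at any iterate $\bm x^{(t)}$ that is not yet optimal, the displacement $\bm x^{(t)} - \bm x_{\min}$ lies in $\ker(A)$ and can be written as
\begin{equation*}
\bm x^{(t)} - \bm x_{\min} \;=\; \sum_{i=1}^{k} \alpha_i \bm g_i, \qquad k \le n,
\end{equation*}
where each $\bm g_i \in \mc(A,B)$, each $\alpha_i > 0$, and each $\bm g_i$ is a feasible direction from $\bm x^{(t)}$ along which one can move by at least $\alpha_i$ inside $\mcp$ (this is the crucial sign-compatibility property, which also guarantees $k \le n$). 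Telescoping the objective gives
\begin{equation*}
\bm c^\T(\bm x^{(t)} - \bm x_{\min}) \;=\; \sum_{i=1}^{k} \alpha_i \bm c^\T \bm g_i,
\end{equation*}
so some index $i^\star$ satisfies $-\bm c^\T(\alpha_{i^\star}\bm g_{i^\star}) \ge \tfrac{1}{n}\bm c^\T(\bm x^{(t)}-\bm x_{\min})$. Because $\alpha_{i^\star}\bm g_{i^\star}$ is feasible from $\bm x^{(t)}$, the exact greatest-improvement circuit-augmentation step improves the objective by at least this amount, and hence a $\gamma$-approximate greatest-improvement step improves the objective by at least $\tfrac{1}{n\gamma}\bm c^\T(\bm x^{(t)}-\bm x_{\min})$.

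This yields the geometric recursion
\begin{equation*}
\bm c^\T(\bm x^{(t+1)} - \bm x_{\min}) \;\le\; \Bigl(1 - \tfrac{1}{n\gamma}\Bigr)\,\bm c^\T(\bm x^{(t)} - \bm x_{\min}),
\end{equation*}
so after $T$ augmentations the remaining gap is at most $\bigl(1-\tfrac{1}{n\gamma}\bigr)^T \bm c^\T(\bm x_0 - \bm x_{\min}) \le \exp(-T/(n\gamma))\,\bm c^\T(\bm x_0 - \bm x_{\min})$.

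The main obstacle, and the place that requires the $\delta$ factor, is converting this multiplicative decrease into a finite stopping criterion. For this I would show that the objective value of any iterate produced by a circuit-augmentation algorithm is rational with denominator bounded by a polynomial in $\delta$ (and the encoding of $\bm b,\bm d,\bm c$). Indeed, after a maximal step from $\bm x^{(t)}$ along a circuit $\bm g$, the new point lies on a face of $\mcp$ whose defining system is a subsystem of $A\bm x=\bm b$, $B\bm x\le \bm d$ of rank $n$; writing $\bm x^{(t+1)}$ via Cramer's rule for this subsystem (tracking one extra auxiliary equation carrying the previous iterate if needed) gives denominators divisible only by subdeterminants of $\binom{A}{B}$, hence bounded by $\delta$ up to fixed integer factors. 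Consequently, any positive objective gap $\bm c^\T(\bm x^{(t)}-\bm x_{\min})$ is at least $1/\delta$ (after clearing integer data in the standard way). Choosing $T = \Theta\bigl(n\gamma \log(\delta\,\bm c^\T(\bm x_0-\bm x_{\min}))\bigr)$ forces the exponential bound below $1/\delta$, so the gap must in fact be zero and $\bm x^{(T)} = \bm x_{\min}$, which is exactly the claimed iteration count.
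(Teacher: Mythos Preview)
Your geometric-convergence argument is correct and matches the paper: the sign-compatible circuit decomposition of $\bm x^{(t)}-\bm x_{\min}$ into at most $n$ feasible circuit steps, followed by averaging and division by $\gamma$, gives exactly the recursion $\bm c^\T(\bm x^{(t+1)}-\bm x_{\min})\le(1-\tfrac{1}{n\gamma})\bm c^\T(\bm x^{(t)}-\bm x_{\min})$ that the paper derives.

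The gap is in your termination argument. You assert that after a maximal circuit step the new iterate lies on a face ``whose defining system is a subsystem of $A\bm x=\bm b$, $B\bm x\le\bm d$ of rank $n$'', so that Cramer's rule bounds its denominator. This is false: a maximal step only guarantees that \emph{one} additional inequality becomes tight, so the rank of the active system increases by at least one, not to $n$. Intermediate iterates of a circuit-augmentation algorithm are typically \emph{not} vertices of $\mcp$, and there is no reason their coordinates should have denominators bounded by $\delta$ (the parenthetical ``tracking one extra auxiliary equation carrying the previous iterate'' does not fix this, since that equation is not a row of $\binom{A}{B}$ and its coefficients may themselves have large denominators accumulated over previous iterations). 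Consequently your claim that any positive gap is at least $1/\delta$ is unjustified.

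The paper circumvents this issue rather than confronting it. It runs the geometric decrease only until the gap is below $1/\delta^2$ (not until it is zero), which takes $O(n\gamma\log(\delta\,\bm c^\T(\bm x_0-\bm x_{\min})))$ steps. Then it invokes a separate lemma (\autoref{cor:snap_to_vert}) to move from the current (non-vertex) iterate to a vertex $\bm x'$ with no worse objective in at most $n$ further augmentations. Only now does Cramer's rule apply: both $\bm x'$ and $\bm x_{\min}$ are vertices, so their entries have denominators dividing subdeterminants $\delta_1,\delta_2\le\delta$, whence any nonzero gap would be at least $1/\mathrm{lcm}(\delta_1,\delta_2)\ge 1/\delta^2$, contradicting the bound already achieved. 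The extra $n$ steps are absorbed into the big-$O$.
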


We exploit Lemma~\ref{lem:approximation} in two ways. First, as an easy corollary, we get a polynomial bound on the \emph{circuit-diameter}
of any rational polyhedron, where the circuit-diameter is the maximum length of a shortest circuit-path between any two 
vertices. To the best of our knowledge, this has not been observed before. We recall that, as usual, the encoding length of a rational number $\frac{p}{q}$ is defined as  $\lceil \log (p+1)\rceil + \lceil \log (q+1)\rceil + 1$.

\begin{cor}\label{cor:circuit_diameter}
There exists a polynomial function $f(m,\alpha)$ that bounds above the circuit-diameter
of any rational polyhedron $\mcp=\pol$ with $m$ row constraints and maximum encoding length among the coefficients in
its description equal to $\alpha$.
\end{cor}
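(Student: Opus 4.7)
The plan is to reduce the circuit-diameter bound to counting circuit-augmentations and then apply \autoref{lem:approximation} with $\gamma = 1$ as a black box. Given any two vertices $\bm{v}_1, \bm{v}_2$ of $\mcp$, I will produce an objective vector $\bm{c}$ for which $\bm{v}_2$ is the \emph{unique} minimizer of $\bm{c}^\T\bm{x}$ over $\mcp$; then the circuit-augmentation path from $\bm{v}_1$ using the exact greatest-improvement rule (which exists in principle even though it is NP-hard to compute) must terminate at $\bm{v}_2$, and its length will give an upper bound on the circuit-distance from $\bm{v}_1$ to $\bm{v}_2$.

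The first step is to construct such a $\bm{c}$ with integer entries whose bit-size is polynomial in $m$ and $\alpha$. The normal cone of $\bm{v}_2$ in $\mcp$ is a rational polyhedral cone whose defining inequalities come from the rows of $A$ and $B$ that are active at $\bm{v}_2$. A standard argument using Cramer's rule on the linear system defining a ray of this cone yields a rational point in its relative interior with encoding length polynomial in $m$ and $\alpha$; a small controlled perturbation, if needed, preserves uniqueness without blowing up bit-size. Clearing denominators gives an integer vector $\bm{c}$ of the required size that makes $\bm{v}_2$ the unique optimum.

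Next, applying \autoref{lem:approximation} to $(\mcp, \bm{c}, \bm{v}_1)$ with $\gamma = 1$ yields a greatest-improvement circuit-path from $\bm{v}_1$ to $\bm{v}_2$ of length at most $O\bigl(n \log(\delta \, \bm{c}^\T(\bm{v}_1 - \bm{v}_2))\bigr)$. It remains to bound each factor polynomially in $m$ and $\alpha$: Hadamard's inequality on $n \times n$ submatrices of $\binom{A}{B}$ gives $\log \delta = O\bigl(n(\alpha + \log n)\bigr)$; vertices of rational polyhedra have coordinate bit-sizes polynomial in $m$ and $\alpha$ (Cramer's rule on their active systems), and $\bm{c}$ was built to have polynomial bit-size, so $\log \bm{c}^\T(\bm{v}_1 - \bm{v}_2)$ is polynomial in $m$ and $\alpha$; finally, since $\mcp$ admits a vertex, $\operatorname{rank}\binom{A}{B} = n$, so $n \leq m$. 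Combining these estimates produces the desired polynomial $f(m, \alpha)$.

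The main potential obstacle is the construction of $\bm{c}$: one must ensure simultaneously that it induces $\bm{v}_2$ as the unique optimum and that its encoding length remains controlled. This is the only nontrivial part, and it is handled by the standard polyhedral bit-complexity toolkit applied to the normal cone of $\bm{v}_2$. Everything else is a direct substitution into \autoref{lem:approximation}.
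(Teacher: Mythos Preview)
Your proposal is correct and follows essentially the same route as the paper: construct an integral objective $\bm c$ making the target vertex the unique minimizer, apply \autoref{lem:approximation} with $\gamma=1$, and bound each factor polynomially in $m$ and $\alpha$. The one place where the paper is more concrete than your sketch is the construction of $\bm c$: rather than invoking the normal-cone bit-complexity toolkit, the paper simply takes $\bm c$ to be (the negative of) the sum of the rows of $A$ and of $B_T$, where $T$ indexes the inequalities tight at the target vertex; this immediately gives $\log\|\bm c\|_\infty = O(\alpha + \log m)$ and makes the target vertex the unique optimum without any perturbation argument.
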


Second, we prove that for an extreme point $\bm{x}$ of a 0/1-polytope $\mathcal P$, a polynomial approximation of a greatest-improvement circuit-pivot rule can be obtained by simply computing a steepest edge incident at $\bm{x}$.

\begin{thm}
\label{thm:approx_edge} 
Let $\bm{x}$ be an extreme point of a 0/1-LP with $n$ variables. An augmentation along a steepest edge yields an $n$-approximation of an augmentation along a greatest-improvement circuit.
\end{thm}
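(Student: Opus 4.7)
The plan is to exploit the classical fact that the tangent cone at a vertex of a polytope is generated by its incident edges, and to express the endpoint of the greatest-improvement circuit augmentation as a conic combination of adjacent-vertex displacements from $\bm{x}$. Concretely, let $\alpha^{*}\bm{g}^{*}$ realize a greatest-improvement circuit augmentation at $\bm{x}$, with endpoint $\bm{y}^{*}=\bm{x}+\alpha^{*}\bm{g}^{*}\in\mcp$, and let $\bm{e}_1,\dots,\bm{e}_t$ denote the edge directions out of $\bm{x}$, normalized so that each $\bm{x}+\bm{e}_i$ is an adjacent vertex. I would start by writing
\[
\bm{y}^{*}-\bm{x}=\sum_{i}\mu_{i}\bm{e}_{i}, \qquad \mu_{i}\geq 0.
\]

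The decisive step, and the place where the $0/1$ hypothesis becomes essential, is to establish the $\ell_1$-bound $\sum_{i}\mu_{i}\|\bm{e}_{i}\|_1\leq n$. This is the main obstacle, because a priori cancellation among the $\bm{e}_i$ could make $\sum_{i}\mu_{i}\|\bm{e}_{i}\|_1$ arbitrarily large compared to $\|\bm{y}^{*}-\bm{x}\|_1$. The key observation I would use is that, coordinate-by-coordinate, all edge entries share a common sign: for each $k$, whether $\bm{x}_k=0$ or $\bm{x}_k=1$ forces $(\bm{e}_{i})_{k}$ to be non-negative or non-positive respectively for every $i$, since every $\bm{x}+\bm{e}_i$ is a $0/1$ vector. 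Hence no cancellation can occur, so $\sum_{i}\mu_{i}|(\bm{e}_{i})_{k}|=|(\bm{y}^{*}-\bm{x})_{k}|\leq 1$, and summing over the $n$ coordinates yields the claim.

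To conclude, I would invoke the defining property of a steepest edge $\bm{e}^{*}$, namely $-\bm{c}^\T\bm{e}_{i}\leq\|\bm{e}_{i}\|_1\cdot(-\bm{c}^\T\bm{e}^{*})/\|\bm{e}^{*}\|_1$ for every $i$. Combined with the previous paragraph and the trivial bound $\|\bm{e}^{*}\|_1\geq 1$ (as $\bm{e}^{*}$ is a nonzero $\{-1,0,1\}$-vector), this chains into
\[
-\bm{c}^\T(\alpha^{*}\bm{g}^{*})=\sum_{i}\mu_{i}(-\bm{c}^\T\bm{e}_{i})\;\leq\;\frac{-\bm{c}^\T\bm{e}^{*}}{\|\bm{e}^{*}\|_1}\sum_{i}\mu_{i}\|\bm{e}_{i}\|_1\;\leq\; n\cdot(-\bm{c}^\T\bm{e}^{*}).
\]
Finally, since the maximal feasible step from a vertex along an edge direction of a $0/1$ polytope equals $1$, the augmentation along $\bm{e}^{*}$ produces exactly the improvement $-\bm{c}^\T\bm{e}^{*}$, which by the display above is at least a $1/n$ fraction of the greatest-improvement circuit augmentation.
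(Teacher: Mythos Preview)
Your proof is correct and takes a genuinely different route from the paper's.

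The paper proceeds indirectly through two auxiliary results. First, it proves a general bound (their Corollary~\ref{cor_approx}) valid for \emph{any} bounded LP: a steepest-descent \emph{circuit} augmentation at a vertex is an $(M_1/\omega_1)$-approximate greatest-improvement augmentation, where $M_1$ is the maximum $\ell_1$-distance between vertices and $\omega_1$ is the minimum $\ell_1$-distance from a vertex to a facet not containing it. Second, it proves (Lemma~\ref{lem:0/1_steep_is_edge}) that on a $0/1$ polytope the steepest-descent circuit at a vertex is actually an edge, via an LP argument showing the feasible cone intersected with the $\ell_1$-ball has its optimum on an extreme ray. The theorem then follows by checking $M_1\le n$ and $\omega_1\ge 1$ for $0/1$ polytopes.

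Your argument bypasses both ingredients. You never invoke circuits at all beyond the target point $\bm y^*$: you decompose $\bm y^*-\bm x$ directly into edge directions via the tangent cone, and the crucial observation that all edge directions at a $0/1$ vertex lie in a single orthant lets you turn the conic decomposition into the exact equality $\sum_i\mu_i\|\bm e_i\|_1=\|\bm y^*-\bm x\|_1\le n$. This is more elementary and self-contained; in particular it never needs to know that the steepest circuit coincides with the steepest edge. What the paper's approach buys, on the other hand, is modularity: the $(M_1/\omega_1)$ bound and the ``steepest circuit $=$ steepest edge'' lemma are of independent interest and are reused elsewhere (e.g., for Theorem~\ref{thm:steep_aprrox_deep} and Theorem~\ref{thm:simplex_0/1}), whereas your argument is tailored specifically to the $0/1$ setting.
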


We stress that the above theorem is not true for arbitrary (non 0/1) LPs, since in general augmenting along the edges incident at a vertex can be arbitrarily worse compared to augmenting along circuits, even in dimension two (see Figure~\ref{fig:tight_approx_for_steep} in Section~\ref{sec:approximating}). 
For 0/1-LPs instead, the above theorem can be combined with Lemma~\ref{lem:approximation} and the seminal result of Frank $\&$ Tardos \cite{andras+eva}, to yield Theorem~\ref{thm:main1}. Even more, we can show that the result of Theorem~\ref{thm:main1} holds more generally for \emph{circuit}-augmentation algorithms (see Theorem~\ref{thm:simplex_0/1}). 

\smallskip
Finally, as a byproduct of our result we also show (i) that a steepest-descent circuit-pivot rule for 0/1-LPs reduces to computing steepest edges, and this can be done in polynomial time (Lemma~\ref{lem:0/1_steep_is_edge}); (ii) a new bound on the number of steps performed by a circuit-augmentation algorithm that uses a steepest-descent circuit-pivot rule, for general (non 0/1) bounded LPs (Theorem~\ref{thm:steep_aprrox_deep}). In fact, the authors of~\cite{DL15},  and later \cite{Borgwardt+Viss}, gave bounds on the number of steps, which depend on the size of the set of circuits $\C(A,B)$ and the number of different values the objective function takes on that set. However, such bounds are a bit opaque, and often difficult to analyze. In Theorem~\ref{thm:steep_aprrox_deep} we get another type of bound of independent application. Our bound is more comparable  to that obtained with the greatest-improvement circuit-pivot rule, since, unlike all previous bounds for steepest-descent, it depends more explicitly on the input description of the LP. Furthermore, it is valid for bounded LPs in general form, and not just LPs in equality form.

\subsection{Relevant prior work and comparison with our results}

We now give some extra context to our results, mentioning several important papers.
\paragraph{Pivoting rules.} Several pivot rules have been proposed for the Simplex method, but since 1972 researchers have debunked most of them as candidates for good polynomial behavior.  As of today, all popular pivoting rules 
are known to require an exponential number of steps to solve some concrete ``twisted'' linear programs (see \cite{amentaziegler,AvisFriedmann2017,Friedmannetal,GOLDFARB+sit1979,JEROSLOW1973,kleeminty,Hansen+Zwick2015,terlaky+zhang,zadeh2,zieglerextremelps}  and the many references there). Some ways to pivot over circuits that we do not consider here are presented e.g., in \cite{McCormick00,SW2002,KarzanovMcCormick97, Borgwardt+Viss}. In particular, the circuit-pivot rule in~\cite{McCormick00} also guarantees geometric convergence. We stress that in this article we consider only ``pure'' pivoting rules that cannot be adjusted along the way (just as the Simplex method requires). If one is allowed to adapt the pivot step along the way, the complexity may be different, but at the same time the algorithm would depart more substantially from the Simplex method. 
Similarly, we restrict ourselves to pivot rules that, while valid for circuits,  
relate to well-known Simplex pivot methods.  Kitahara and Mizuno~\cite{kitaharamizuno} provided a general upper bound for the number of different basic feasible solutions generated by the Simplex method for linear programming problems (LP) having optimal solutions. Their bound is polynomial on the number of constraints, the number of variables, and the ratio between two parameters: $\delta$, the minimum value of a positive component among basic feasible solutions, and $\gamma$, the maximum value among all positive components. Unfortunately, their bound is not polynomial in the encoding length, and in fact, while $\gamma$ can be efficiently computed, it has been shown that $\delta$ is 
NP-hard to determine (see \cite{KunoSanoTsuruda18}).  
Later in \cite{kitaharamatsuimizuno} the authors showed that the bounds are good  for some special LP problems including those on $0/1$ polytopes, those with totally unimodular matrices, and the Markov decision problems. 

\paragraph{0/1 polytopes.} Understanding monotone paths of $0/1$ polytopes is particularly important in discrete optimization.  Thus it is relevant to understand the performance of augmentation algorithms, and in particular of the Simplex method, in that family of polytopes. 
The authors of \cite{Kortenkamp1997} showed that the monotone paths of some pivot rules are unfortunately exponentially long even in $0/1$ polytopes. Later \cite{kitaharamizuno,kitaharamatsuimizuno} proved that the 
number of distinct basic feasible solutions generated by the Simplex method is strongly-polynomial for some 0/1-LPs of special format, namely,
0/1-LPs in standard equality form. For a comparison, our bound in Theorem~\ref{thm:main1} is weaker (though, still strongly-polynomial), but it is valid for \emph{all} 0/1-LPs. 
An important remark is the following: The fact that paths of strongly-polynomial length on the 1-skeleton of $0/1$ polytopes can be 
constructed from \emph{any} augmentation oracle that outputs an improving edge-direction is already known (see \cite{10.1007/3-540-60313-1_164}, and 
\cite{delpiamichini} for a recent generalization to lattice polytopes). However, such arguments require some scaling/modifying of the objective function, and they are not realized with a ``pure'' pivoting rule. In fact, the resulting path might not even be monotone with respect to the original objective function. In contrast, our result in Theorem~\ref{thm:main1} yields a monotone path of strongly-polynomial length that is realized by simply moving along steepest edges. In the context of the Simplex method, our Theorem~\ref{thm:main1} yields a strongly-polynomial bound on the number of distinct basic feasible solutions visited by the Simplex method, if implemented with a pivot rule that makes it move to an adjacent extreme point via a
steepest edge. 
Note that our bound (as well as the results of \cite{kitaharamizuno,kitaharamatsuimizuno}) is not on the total number of \emph{basis exchanges}, in presence of degeneracy. While computing a steepest edge can be done in polynomial time (as explained in the proof of Lemma~\ref{lem:0/1_steep_is_edge}), an interesting open problem is whether moving along steepest edges on $0/1$-LPs 
can be realized efficiently via a ``pure'' pivot rule for the Simplex method in presence of degeneracy. In other words, can one 
guarantee that the number of different adjacent bases visited by the algorithm is strongly-polynomial (rather than the number 
of different extreme points, as we proved here)?

\paragraph{Hardness.} Several authors arrived before us to the conclusion that pivot rules can be used to encode NP-hard problems \cite{adler+papadimitriou+rubinstein,disser+skutella, 
fearnley+savani}. For example, Fearnley and Savani~\cite{fearnley+savani} showed that it is PSPACE-complete to decide whether Dantzig's pivot rule ever chooses a specific  variable to enter the basis. In contrast, Adler et al.~\cite{adler+papadimitriou+rubinstein} showed the simplex method with the shadow vertex pivot rule can  decide this query in polynomial time. 
We stress that our hardness result in Theorem~\ref{thm:main2} does not 
follow from the hardness of computing the diameter of a polytope \cite{FT94,Sanita18}. In fact, 
the hardness results in \cite{FT94} and \cite{Sanita18} rely on the existence/non existence of vertices with a certain 
structure, and they do not provide a specific objective function  to minimize over their polytopes.

 We note that Theorem~\ref{thm:main2} and Corollary \ref{cor:bestneighborhard} can also be 
derived using the circulation polytope~\cite{BT89}, instead of the bipartite matching polytope.
In particular, the characterization of circuits for the circulation polytope becomes easier, 
since circuits can be associated with simple cycles. Hence the hardness results of the above 
oracles can be derived from the NP-hardness of finding a cycle that gives the biggest
improvement in the cost, a result proven in~\cite{BT89}. However, our proof relies on the 
matching polytope for three reasons: First,  bipartite matching is a well-studied combinatorial problem, 
it is quite interesting that the hardness resides already in such a simple and well-studied polyhedron.
Second, if we had written our hardness result using the circulation polytope in \cite{BT89}, the presentation 
would not be any shorter than the proof written for the (integral) bipartite matching polytope. The extra 
material regarding the fractional  matching polytope is only a couple of pages longer, and it does add 
new combinatorial properties to the literature.  Third, we believe that the characterization of the circuits 
of the fractional matching polytope can be useful in determining the complexity of computing the circuit-diameter of a polytope. 
Interestingly, our results in Section~\ref{sec:frac_matching} show that the circuits of the fractional matching polytope 
correspond to the actual edges of the polytope (i.e., the translation of any subset of the facets does not increase 
the initial set of edge-directions). However, we can construct examples showing that the circuit-diameter can be 
strictly smaller than the (combinatorial) diameter for the fractional matching polytope. As a consequence, the 
hardness result on the computation of its diameter given in \cite{Sanita18} does not trivially extend to the circuit setting.
We feel that the characterization of the circuits of the fractional matching polytope can still be exploited to resolve the complexity of the computation of its circuit-diameter, and hence can be useful in attacking the mentioned open question.

\section{Preliminaries} \label{sec:preliminaries}
We consider $\mcp$ to be a polyhedron of the form $\mcp=\pol$ for integer matrices $A$ and $B$ of sizes $m_A \times n$ and $m_B \times n$ respectively, and integer vectors $\bm{b}$ and $\bm{d}$, and assume that we wish to minimize a linear integral objective function $\vecc^\T \bm{x}$ over $\mcp$. We further assume that $A$ has full row rank and that the rank of $\binom{A}{B}$ is $n$. For our purposes, we also assume $m_B \geq 1$ as otherwise $\mcp$ contains trivially only one point. As mentioned before, $\ker(A)$ denotes the kernel of the matrix $A$. 
For a matrix $D$ and a subset $T$ of row indices, we let $D_T$ denote the submatrix of $D$ given by the rows indexed by $T$.
Furthermore, we let $\rk(D)$ denote its rank, and $\det(D)$ denote its determinant. For a vector $\bm{x}$, we let 
 $\supp(\bm{x})$ be the support of the vector $\bm{x}$, and $\bm x(i)$ denote its $i$-th component. 

\smallskip
 Given a vertex $\bar{\bm x}$ of $\mcp$, we define the \emph{feasible cone} at $\bar{\bm x}$ to be the set of all directions $\bm z$ such that $\bar{\bm x}+\varepsilon\bm z\in\mcp$ for some $\varepsilon>0$.  
More formally, it is the set $\set{\bm z \in\R^n: \,A\bm z=\bm 0,\, B_{T(\bar{\bm x})}\bm z\leq \bm 0}$ 
where $T(\bar{\bm x})$ denotes the indices of the inequalities of $B\bm x\leq \bm d$ 
that are tight at $\bar x$. The extreme rays of the feasible cone at $\bar{\bm x}$ are the \emph{edge-directions} at $\bar{\bm x}$.

\smallskip
A circuit-path is a  finite sequence of feasible solutions $\vex_1$, $\vex_2$, $\ldots, \vex_q$   
satisfying $\vex_{i+1}= \vex_{i} + \alpha_{i} \bm{g}_i$, where 
$\bm{g}_i \in \mc(A,B)$ and 
$\alpha_i \in \mathbb R_{>0}$ is such that $\vex_{i} + \alpha_{i} \bm{g}_i \in \mcp$ but $\vex_{i} + (\alpha_{i} + \varepsilon ) \bm{g}_i \notin \mcp$ for all $\varepsilon >0$ (i.e., the augmentation is maximal). 
Note that $\vex_{i}$ is not necessarily a vertex of $\mcp$.
A circuit-path is called monotone if each $\bm g_i$ satisfies $\vecc^\T \bm{g}_i < 0$ (i.e., it is an improving circuit).

\smallskip
A circuit-augmentation algorithm computes a monotone circuit-path starting at a given initial feasible solution,
until an optimal solution is reached (or unboundedness is detected). 
The circuit $\bm{g}$  to use at each augmentation is usually chosen according to some \emph{circuit-pivot rule}.
As discussed before, in this paper we focus on three such rules, each of which gives rise to a corresponding
optimization problem.  

The optimization problem that arises when following the greatest-improvement circuit-pivot rule will be called $Great(\mcp,\bm x, \bm c)$, and is as follows:
\begin{equation*}
	\begin{aligned}
	    &\max - \bm c^\T(\alpha\bm g)\\
	    &\text{s.t.}\\
	    &\bm g\in\mc(A,B),\\
	    &\alpha>0,\\
	    &\bm x+\alpha\bm g\in \mcp . \\
	\end{aligned}
\end{equation*}

The optimization problem that arises when following the Dantzig circuit-pivot rule will be called $Dan(\mcp,\bm x, \bm c)$, 
and is as follows:

\begin{equation*}
	\begin{aligned}
	    &\max - \bm c^\T\bm g\\
	    &\text{s.t.}\\
	    &\bm g\in\mc(A,B),\\
	    &\bm x+\epsilon\bm g\in \mcp && \text{for some }\epsilon>0.
	\end{aligned}
\end{equation*}

The optimization problem that arises when following the steepest-descent circuit-pivot rule will be called $Steep(\mcp,\bm x, \bm c)$, 
and is defined as follows:
\begin{equation*}
	\begin{aligned}
	    &\max - \frac{\bm c^\T\bm g}{||\bm g||_1}\\
	    &\text{s.t.}\\
	    &\bm g\in\mc(A,B),\\
	    &\bm x+\epsilon\bm g\in \mcp && \text{for some }\epsilon>0.
	\end{aligned}
\end{equation*}

A maximal augmentation given by an optimal solution to $Great(\mcp,\bm x, \bm c)$ is called a greatest-improvement augmentation. A  
Dantzig augmentation and a steepest-descent augmentation are defined similarly. In this work, we will only use maximal augmentations, and therefore will omit the word ``maximal''.
We remark that, in the context of pivot rules for the Simplex method, the name `steepest-descent' often refers to 
normalizations according to the 2-norm of a vector, rather than the 1-norm. Here we stick to this name
as used previously in \cite{DL15,Borgwardt+Viss}, but at the end of the paper we note that this is not a major assumption.

\section{Hardness of some Circuit-Pivot Rules}
\label{sec:hardness_pivot_rules}

\subsection{The Circuits of the Fractional Matching Polytope}
\label{sec:frac_matching}

Let $G$ be a simple connected graph with nodes $V(G)$ and edges $E(G)$. We assume $|V(G)|\geq 3$. Given $v\in V(G)$, we let $\delta_G(v)$ denote the edges of $E(G)$ incident with $v$. We call a node $v \in V(G)$ a \emph{leaf} if $|\delta_G(v)|=1$, and let $L(G)$ denote the set of leaf nodes of $G$. Furthermore, for $X\subseteq E$ and $\bm x\in\R^{E(G)}$, we let $\bm x(X)$ denote $\sum_{e\in X}\bm x(e)$.  

Let $\mcp_\fmat(G)$ denote the fractional matching polytope of $G$, which is defined by the following (minimal) linear system:
	\begin{align}
		&\bm{x}\left(\delta_G(v)\right)\leq 1, &&\text{for all } v\in V(G)\setminus L(G). \label{eq1}\\
		&\bm{x} \geq \bm 0.
	\end{align}
In this section, we fully characterize the circuits of $\mcp_\fmat(G)$. 
We will prove that, if $\bm{x}$ is a circuit of 
$\mcp_\fmat(G)$, then $\supp(\bm{x})$ induces a connected subgraph of $G$ that has a very special structure: namely, it belongs to one of the five classes of graphs ($\me_1,\me_2,\me_3,\me_4,\me_5,$) listed below.    
\begin{itemize}
\item[(i)] Let $\me_1$ denote the set of all subgraphs $F\subseteq G$ such that $F$ is an even cycle. 

\item[(ii)]Let $\me_2$ denote the set of all subgraphs $F\subseteq G$ such that $F$ is an odd cycle.

\item[(iii)]Let $\me_3$ denote the set of all subgraphs $F\subseteq G$ such that $F$  is a simple path.

\item[(iv)]Let $\me_4$ denote the set of all subgraphs $F\subseteq G$ such that $F$  is a connected graph satisfying $F=C\cup P$, where $C$ and $P$ are an odd cycle and a non-empty simple path, respectively, that intersect only at an endpoint of $P$. (See Figure~\ref{fig:E4}).
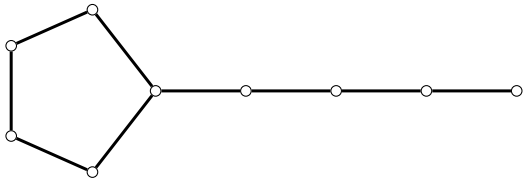
\begin{figure}[!h]
	\begin{center}
			\begin{tikzpicture}[scale=1.2]	
				\tikzset{
					graph node/.style={shape=circle,draw=black,inner sep=0pt, minimum size=4pt
      							  }
						}
				\tikzset{	 edge/.style={black, line width=.4mm
      							  }
						}		

                \node[graph node] (1) at (.2,0){};
                \node[graph node] (2) at (-0.5,0.9){};
                \node[graph node] (3) at (-0.5,-0.9){};
                \node[graph node] (5) at (-1.4,-0.5){};
                \node[graph node] (4) at (-1.4,0.5){};
                
                \node[graph node] (6) at (1.2,0){};
                \node[graph node] (7) at (2.2,0){};
                \node[graph node] (8) at (3.2,0){};
                \node[graph node] (9) at (4.2,0){};
                
                \draw[edge] (1)--(2);
                \draw[edge] (3)--(1);
                \draw[edge] (3)--(5);
                \draw[edge] (5)--(4);
                \draw[edge] (2)--(4);
                
                \draw[edge] (1)--(6);
                \draw[edge] (7)--(6);
                \draw[edge] (7)--(8);
                \draw[edge] (9)--(8);

			\end{tikzpicture}
	\end{center}
	\caption{An Example of a subgraph belonging to $\me_4$}
	\label{fig:E4}
\end{figure}

\item[(v)] Let $\me_5$ denote the set of all subgraphs $F\subseteq G$ such that $F$ is a connected graph with $F=C_1\cup P\cup C_2$, where $C_1$ and $C_2$ are odd cycles, and $P$ is a (possibly empty) simple path satisfying the following: if $P$ is non-empty,
then $C_1$ and $C_2$ are node-disjoint and $P$ intersects each $C_i$ exactly at its endpoints (see Figure~\ref{fig:E5 path});
if $P$ is empty then $C_1$ and $C_2$ intersect only at one node $v$ (see Figure~\ref{fig:E5 no path}).
\begin{figure}[!h]
	\begin{center}
			\begin{tikzpicture}[scale=1.2]	
				\tikzset{
					graph node/.style={shape=circle,draw=black,inner sep=0pt, minimum size=4pt
      							  }
						}
				\tikzset{	 edge/.style={black, line width=.4mm
      							  }
						}		

                \node[graph node] (1) at (.2,0){};
                \node[graph node] (2) at (-0.5,0.9){};
                \node[graph node] (3) at (-0.5,-0.9){};
                \node[graph node] (5) at (-1.4,-0.5){};
                \node[graph node] (4) at (-1.4,0.5){};
                
                \node[graph node] (6) at (1.2,0){};
                \node[graph node] (7) at (2.2,0){};
                \node[graph node] (8) at (3.2,0){};
                \node[graph node] (9) at (4.2,0){};
                
                \node[graph node] (10) at (4.9,.9){};
                \node[graph node] (11) at (4.9,-.9){};
                
                \draw[edge] (1)--(2);
                \draw[edge] (3)--(1);
                \draw[edge] (3)--(5);
                \draw[edge] (5)--(4);
                \draw[edge] (2)--(4);
                
                \draw[edge] (1)--(6);
                \draw[edge] (7)--(6);
                \draw[edge] (7)--(8);
                \draw[edge] (9)--(8);
                
                \draw[edge] (9)--(10);
                \draw[edge] (9)--(11);
                \draw[edge] (11)--(10);

			\end{tikzpicture}

	\caption{An Example of a subgraph belonging to $\me_5$ where $P$ is non-empty.}
	\label{fig:E5 path}

			\begin{tikzpicture}[scale=1.2]	
				\tikzset{
					graph node/.style={shape=circle,draw=black,inner sep=0pt, minimum size=4pt
      							  }
						}
				\tikzset{	 edge/.style={black, line width=.4mm
      							  }
						}		

                \node[graph node] (1) at (.2,0){};
                \node[graph node] (2) at (-0.5,0.9){};
                \node[graph node] (3) at (-0.5,-0.9){};
                \node[graph node] (5) at (-1.4,-0.5){};
                \node[graph node] (4) at (-1.4,0.5){};
                
                \node[graph node] (10) at (0.9,.9){};
                \node[graph node] (11) at (0.9,-.9){};
                
                \draw[edge] (1)--(2);
                \draw[edge] (3)--(1);
                \draw[edge] (3)--(5);
                \draw[edge] (5)--(4);
                \draw[edge] (2)--(4);
                
                \draw[edge] (1)--(10);
                \draw[edge] (1)--(11);
                \draw[edge] (11)--(10);

			\end{tikzpicture}
	\end{center}
	\caption{An Example of a subgraph belonging to $\me_5$ where $P$ is empty.}
	\label{fig:E5 no path}
\end{figure}
\end{itemize}
We will associate a set of circuits to the subgraphs in the above families,
by defining the following five sets of vectors. It is worth noticing that similar elementary moves appeared in \cite{DST1995}
in applications of Gr\"obner bases in combinatorial optimization.

 \begin{equation*}
 \begin{array}{ccccc}
\mc_1= & \bigcup_{F \in \me_1} \Big\{\bm g\in\{-1,1\}^{E(G)}:& \bm g(e) \neq 0 \qquad & \text{iff } e \in E(F) \\ 
&&  \bm g(\delta_F(v))=0 & \forall v \in V(F) & \Big\},\\
&&&&\\
\mc_2=  &\bigcup_{F \in \me_2} \Big\{\bm g\in\{-1,1\}^{E(G)}: & \bm g(e) \neq 0 \qquad & \text{iff } e \in E(F) \\ 
&& \bm g(\delta_F(w))\neq 0 & \text{ for one } w \in V(F) \\
&&  \bm g(\delta_F(v))=0 & \forall v \in V(F)\setminus \{w\} & \Big\},\\
&&&&\\
\mc_3=  &\bigcup_{F \in \me_3} \Big\{\bm g\in\{-1,1\}^{E(G)}: & \bm g(e) \neq 0 \qquad & \text{iff } e \in E(F) \\ 
&&  \bm g(\delta_F(v))=0 & \forall v: |\delta_F(v)| = 2& \Big\},\\
&&&&\\
\mc_4=  &\bigcup_{F=(P\cup C) \in \me_4} \Big\{\bm g\in\mathbb Z^{E(G)}: & \bm g(e) \neq 0 \qquad & \text{iff } e \in E(F) \\ 
&&  \bm g(\delta_F(v))=0 & \forall v: |\delta_F(v)| \geq 2 \\
&& \bm g(e) \in \{-1,1\} & \forall e \in E(C) \\
&& \bm g(e) \in \{-2,2\} & \forall e \in E(P) & \Big\},\\
&&&&\\
\mc_5=  &\bigcup_{F=(C_1\cup P \cup C_2) \in \me_5} \Big\{\bm g\in\mathbb Z^{E(G)}: & \bm g(e) \neq 0 \qquad & \text{iff } e \in E(F) \\ 
&&  \bm g(\delta_F(v))=0 & \forall v \in V(F)  \\
&& \bm g(e) \in \{-1,1\} & \forall e \in E(C_1 \cup C_2) \\
&& \bm g(e) \in \{-2,2\} & \forall e \in E(P) & \Big\}.\\
\end{array}
\end{equation*}

See Figure~\ref{fig:E5 labeled} for an example of a vector $\bm g \in \mc_5$.

\begin{figure}[htb]
	\begin{center}
			\begin{tikzpicture}[scale=.8]	
				\tikzset{
					graph node/.style={shape=circle,draw=black,inner sep=0pt, minimum size=6pt
      							  }
						}
				\tikzset{	 edge/.style={black, line width=.4mm
      							  }
						}		

                \node[graph node] (1) at (0.4,0){};
                \node[graph node] (2) at (-1,1.8){};
                \node[graph node] (3) at (-1,-1.8){};
                \node[graph node] (5) at (-2.8,-1){};
                \node[graph node] (4) at (-2.8,1){};
                
                \node[graph node] (6) at (2.4,0){};
                \node[graph node] (7) at (4.4,0){};
                \node[graph node] (8) at (6.4,0){};
                \node[graph node] (9) at (8.4,0){};
                
                \node[graph node] (10) at (10.4,1.8){};
                \node[graph node] (11) at (10.4,-1.8){};
                
                \draw[edge] (1)--(2) node [midway, fill=white] {$1$};
                \draw[edge] (3)--(1) node [midway, fill=white] {$1$};
                \draw[edge] (3)--(5) node [midway, fill=white] {$-1$};
                \draw[edge] (5)--(4) node [midway, fill=white] {$1$};
                \draw[edge] (2)--(4) node [midway, fill=white] {$-1$}; 
                
                \draw[edge] (1)--(6) node [midway, fill=white] {$-2$};
                \draw[edge] (7)--(6) node [midway, fill=white] {$2$};
                \draw[edge] (7)--(8) node [midway, fill=white] {$-2$};
                \draw[edge] (9)--(8) node [midway, fill=white] {$2$};
                
                \draw[edge] (9)--(10)  node [midway, fill=white] {$-1$};
                \draw[edge] (9)--(11)  node [midway, fill=white] {$-1$};
                \draw[edge] (11)--(10)  node [midway, fill=white] {$1$};

			\end{tikzpicture}
	\end{center}
	\caption{Example of a vector $\bm g \in \mc_5$.  Each edge $e$ is labeled with $\bm g(e)$.}
	\label{fig:E5 labeled}
\end{figure}
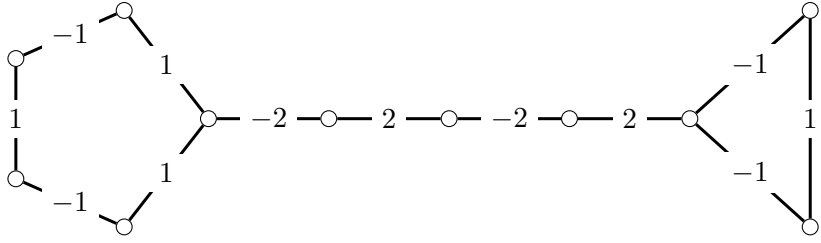

Let us denote by $\mc(\mcp_\fmat(G))$ the set of circuits of $\mcp_\fmat(G)$ with co-prime integer components. 
\begin{lem}
$\mc(\mcp_\fmat(G))=\mc_1\cup\mc_2\cup\mc_3\cup\mc_4\cup\mc_5$.
\end{lem}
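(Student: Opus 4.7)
The plan is to recast the circuit condition as a one-dimensional-kernel problem for an incidence submatrix and then to perform a case analysis on the graph structure of $F$, the subgraph of $G$ induced by $\supp(\bm g)$. Set $V_0^*(\bm g) := \{v \in V(F)\setminus L(G) : \bm g(\delta_F(v))=0\}$, the set of ``balanced'' non-leaf vertices of $F$, and $U(\bm g) := V(F)\setminus V_0^*(\bm g)$. Since $\mcp_\fmat(G)$ has no equality constraints, $\bm g \neq \bm 0$ is a circuit iff $B\bm g$ is support-minimal, which by a standard argument is equivalent to the space $\{\bm y \in \R^{E(G)} : B_i\bm y = 0 \text{ whenever } (B\bm g)_i = 0\}$ being exactly $\mathrm{span}(\bm g)$. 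Translating the tight constraints (zero outside $\supp(\bm g)$, and vertex-balance at each $v \in V_0^*(\bm g)$) yields the equivalent condition: the space $K(\bm g) := \{\bm y \in \R^{E(F)} : \bm y(\delta_F(v)) = 0 \ \forall v \in V_0^*(\bm g)\}$ is one-dimensional, with a generator of full support on $E(F)$. A preliminary observation is that $F$ must be connected, since otherwise restricting $\bm g$ to any connected component yields a second, non-proportional kernel vector.

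To compute $\dim K(\bm g)$, I would form the quotient graph $F''$ obtained from $F$ by merging all vertices of $U(\bm g)$ into a single auxiliary vertex $*$: then $K(\bm g)$ is the kernel of the unsigned incidence matrix of $F''$ with the $*$-row deleted. The standard rank formula for connected graphs ($|V|-1$ if bipartite, $|V|$ otherwise), together with the fact that deleting one row removes at most one linear dependence, yields $\dim K(\bm g) = c(F'') = |E(F)| - |V(F)| + |U(\bm g)|$ when $|U(\bm g)|\geq 1$, and $\dim K(\bm g) = c(F)$ or $c(F)-1$ (bipartite versus not) when $|U(\bm g)| = 0$. Setting this dimension equal to $1$ leaves four admissible pairs $(c(F), |U(\bm g)|)$, which then must be pruned by enforcing full support on $E(F)$. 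The key local fact is that any degree-$1$ vertex of $F$ lying in $V_0^*(\bm g)$ forces its unique incident edge to $0$, so every leaf of $F$ must lie in $U(\bm g)$; parity arguments on how the forced alternating labelling closes around a cycle provide the remaining restrictions.

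The four cases resolve as follows:
\begin{itemize}
\item $(c(F),|U(\bm g)|)=(1,0)$ bipartite: $F$ is an even cycle ($\me_1$).
\item $(c(F),|U(\bm g)|)=(2,0)$ non-bipartite: the connected cycle-rank-$2$ structures containing an odd cycle are theta graphs, two cycles sharing exactly one vertex, and two cycles joined by a path; a parity check rules out theta graphs (one of the three sub-paths is forced to zero), leaving exactly $\me_5$.
\item $(c(F),|U(\bm g)|)=(1,1)$: $F$ is unicyclic with at most one leaf, which must lie in $U(\bm g)$; the cycle must be odd in both subcases (an even cycle's alternation closes trivially and forces the pendant's attaching edge to zero), yielding $\me_2$ (bare odd cycle) or $\me_4$ (odd cycle with one pendant path ending at the unique vertex of $U(\bm g)$).
\item $(c(F),|U(\bm g)|)=(0,2)$: $F$ is a tree whose only leaves are exactly the two vertices of $U(\bm g)$, hence a path ($\me_3$).
\end{itemize}
Solving the one-dimensional kernel in each case recovers the prescribed coefficient pattern; the $\pm 2$ weights on pendant/connecting paths in $\mc_4, \mc_5$ arise because the alternating $\pm 1$ labelling around an odd cycle accumulates $\pm 2$ (rather than $0$) at its attachment vertex, which the path must absorb. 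The reverse inclusion is a direct verification: for each $\bm g \in \mc_1\cup\cdots\cup\mc_5$ the same kernel computation shows $\dim K(\bm g)=1$ with $\bm g$ as generator.

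I expect the main difficulty to be the full-support pruning, where one must show carefully that all mixed-parity configurations (theta graphs, even cycles with pendants, odd cycles with multiple pendants, trees with branching) either collapse onto a smaller support that lies in one of the five canonical families, or else fail the one-dimensional-kernel condition outright.
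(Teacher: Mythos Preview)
Your approach is correct and genuinely different from the paper's. The paper proceeds by a direct case analysis on the cycle structure of $G(\bm g)$ (no cycles / contains an even cycle / contains only odd cycles, the last case supplemented by a claim that any two odd cycles in $G(\bm g)$ share at most one vertex); in each case it explicitly exhibits a vector $\bm f$ in one of $\mc_1,\ldots,\mc_5$ with $\supp(\bm f)\subseteq\supp(\bm g)$ and $\supp(\mathsf B\bm f)\subseteq\supp(\mathsf B\bm g)$, and support-minimality then forces $\bm g=\pm\bm f$. You instead recast ``$\bm g$ is a circuit'' as $\dim K(\bm g)=1$ and read this dimension off from the rank of a vertex-deleted incidence matrix, so that the admissible pairs $(c(F),|U|)$ are dictated by a single equation before the full-support and self-consistency pruning. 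The paper's constructive method is shorter per case and needs no incidence-matrix rank formula; your dimension count is more systematic, treats both inclusions with the same machinery, and does not rely on the cited literature for the inclusion $\mc_1\cup\cdots\cup\mc_5\subseteq\mc(\mcp_\fmat(G))$. One small gap to close in your $(2,0)$ non-bipartite case: you rule out theta graphs but do not explicitly treat barbells in which one of the two cycles is even; the same ``even cycle's alternation closes and forces the attached edge to zero'' argument you invoke in the $(1,1)$ case disposes of this, but it should be stated.
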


\begin{proof}
It is known that the vectors of $\mc_1\cup\cdots\cup\mc_5$ correspond to edge-directions of $\mcp_\fmat(G)$ (see e.g.~\cite{Behrend13,Sanita18}), so it remains to be shown that all circuits belong to one of these sets.  

Let $\B$ denote the constraint matrix corresponding to the inequality constraints~(\ref{eq1}). 
In what follows, the rows of $\B$ will be indexed by $V(G)\setminus L(G)$, and the columns of $\B$ will be indexed by $E(G)$. 
With this notation, we can treat $\supp(\B\bm x)$ and $\supp(\bm x)$ as a subset of $V(G)$ or 
$E(G)$, respectively. Let $\bm g\in\mc(\mcp_\fmat(G))$, and let $G(\bm g)$ be the subgraph of $G$ induced by the edges in $\supp(\bm g)$.

\smallskip
First we note that $G(\bm g)$ is connected.  Otherwise, restricting $\bm g$ to the edges of any component of $G(\bm g)$ gives a vector $\bm f$ with $\supp(\B\bm f)\subseteq\supp(\B\bm g)$ and $\supp(\bm f)\subsetneq\supp(\bm g)$, contradicting that $\bm g$ is a circuit.

\smallskip
Now, suppose that $G(\bm g)$ contains no cycles. Let $P$ be any edge-maximal path in $G(\bm g)$, with endpoints $u$ and $w$.  
Note that $\supp(\B\bm g) \supseteq \{u,w\}\setminus L(G)$.
Let $\bm f \in \{-1,1\}^{E(G)}$ be a vector that satisfies (i) $\bm f(e) \neq 0$ if and only if $e \in E(P)$, and (ii) $\bm f(\delta_P( v))=0 \;
 \forall  v\neq u,w$. Note that $\bm f\in \mc_3$. Then, $\supp(\B\bm f) = \{u,w\} \setminus L(G) \subseteq \supp( \B \bm g)$, and $\supp(\bm f) \subseteq\supp(\bm g)$. 
 Therefore, it must be that the edges of $G(\bm g)$ are exactly $E(P)$, and $\bm g(\delta_P(v))=0$ for all $v\in V(G)\setminus\{u,w\}$.  
Thus, $\bm g = \bm f$ or $\bm g = - \bm f$. In any case, $\bm g\in \mc_3$.

\smallskip
Now, suppose that $G(\bm g)$ contains an even cycle $C$. Let $\bm f \in \{-1,1\}^{E(G)}$ be a vector that satisfies (i) $\bm f(e) \neq 0$ if and only if $e \in E(C)$, and (ii) $\bm f(\delta_C( v))=0 \;
 \forall  v\in V(C)$. Note that $\bm f\in \mc_1$. Then, $\supp(\B\bm f)=\emptyset \subseteq \supp( \B \bm g)$, and $\supp(\bm f) \subseteq\supp(\bm g)$. 
 Therefore, it must be that the edges of $G(\bm g)$ are exactly $E(C)$, and $\bm g(\delta_C(v))=0$ for all $v\in V(G)$.  
Thus, $\bm g = \bm f$ or $\bm g = - \bm f$. In any case, $\bm g\in \mc_1$.

\smallskip
We are left with the case where $G(\bm g)$ contains at least one cycle, 
but it does not contain any even cycle. In this case, first we state an easy claim 
that gives some more structure for the graph $G(\bm g)$. 

\begin{claim} Under the assumption that $G(\bm g)$ contains at least one cycle, but it does not contain an even cycle, 
any two odd cycles in $G(\bm g)$ must share at most one node. 
\end{claim}

\begin{proof}
Let $C,D\subseteq G(\bm g)$ be two odd cycles, and suppose for the sake of contradiction that $|V(C)\cap V(D)|\geq 2$.  Then $C$ can be written as the union of two edge-disjoint paths $C_1\cup C_2$ where $C_1$ is some sub-path of $C$ such that $V(C_1)\cap V(D)=\{u,v\}$ where $u$ and $v$ are the endpoints of $C_1$, and $E(C_1) \cap E(D) =\emptyset$.  Since $D$ is a cycle, we can decompose $D$ into two sub-paths $D_1$ and $D_2$ each with endpoints $u$ and $v$.  Since $|E(D)|$ is odd, for \textit{exactly} one $i\in\{1,2\}$, $|E(D_i)|$ is even.   Note that since $V(C_1)\cap V(D_i)=\{u,v\}$, $C_1\cup D_i$ is a cycle for all $i\in\{1,2\}$, and therefore there exists $i\in\{1,2\}$ such that $C_1\cup D_i$ is an even cycle, a contradiction with the assumption.
\end{proof}

Suppose that $G(\bm g)$ contains at least two distinct odd cycles $C_1$ and $C_2$.  Since $G(\bm g)$ is connected, then either these two cycles share a node or there exists a simple path $P$ in $G(\bm g)$ connecting them.  In particular, we can choose $P$ so that $E(P)\cap E(C_i)=\emptyset$ for $i\in\{1,2\}$.  Let $F=C_1\cup P\cup C_2$ (where $E(P)=\emptyset$ if $C_1$ and $C_2$ share a node). Let $\bm f \in \mathbb Z^{E(G)}$ be a vector that satisfies (i) $\bm f(e) \neq 0$ if and only if $e \in E(F)$,  (ii) $\bm f(\delta_F( v))=0 \; \forall  v\in V(F)$, 
(iii) $\bm f(e) \in \{-1,1\}$ for all $e \in E(C_1 \cup C_2)$, and (iv) $\bm f(e) \in \{-2,2\}$ for all $e \in E(P)$.  Note that $\bm f\in \mc_5$.
Then $\supp(\B\bm f)=\emptyset \subseteq \supp(\B \bm g)$, and $\supp(\bm f)\subseteq\supp(\bm g)$. Therefore, it must be that the edges of $G(\bm g)$ are exactly $E(F)$, and $\bm g(\delta_G(v))=0$ for all $v\in V(G)$. Thus, $\bm g\in \mc_5$.

\smallskip
Finally, suppose that $G(\bm g)$ contains exactly one odd cycle $C$.  If there exists a node $w\in V(C)$ such that $\bm g(\delta_G(w))\neq 0$, then let  $\bm f \in \{-1,1\}^{E(G)}$ be a vector that satisfies (i) $\bm f(e) \neq 0$ if and only if $e \in E(C)$, and (ii) $\bm f(\delta_C( v))=0 \;
 \forall  v\in V(C)\setminus \{w\}$. Note that $\bm f\in \mc_2$.
  Then, $\supp(\B\bm f)=\{w\} \subseteq \supp(\B\bm g)$, and $\supp(\bm f)\subseteq\supp(\bm g)$. Therefore, it must be that the edges of $G(\bm g)$ are exactly $E(C)$, and $\bm g(\delta_C(v))=0$ for all $v\in V(G)\setminus\{w\}$.  Thus, it must be that $ \bm g\in \mc_2$.

\smallskip
We are left with the case where $\bm g(\delta_G(v))=0$ for all $v\in V(C)$.  Note that this is not possible if $\supp(\bm g)=E(C)$, because $C$ is an odd cycle.  Then let $P$ be any simple path in $G(\bm g)$ which is inclusion-wise maximal subject to the condition that $E(P)\cap E(C)=\emptyset$ and 
$|V(P)\cap V(C)|=\{u\}$, where $u$ is an endpoint of $P$. Let $F=C\cup P$, and let $w\in V(G)$ be the unique node such that $|\delta_F(w)|=1$. 
Let $\bm f \in \mathbb Z^{E(G)}$ be a vector that satisfies (i) $\bm f(e) \neq 0$ if and only if  $e \in E(F)$,  (ii) $\bm f(\delta_F( v))=0 \; \forall  v\in V(F)\setminus\{w\}$, (iii) $\bm f(e) \in \{-1,1\}$ for all $e \in E(C)$, and (iv) $\bm f(e) \in \{-2,2\}$ for all $e \in E(P)$.  Note that $\bm f\in \mc_4$.
  Then $\supp(\B\bm f) = \{w\} \setminus L(G) \subseteq \supp(\B\bm g) $, and $\supp(\bm f)\subseteq\supp(\bm g)$. Therefore, it must be that the edges of $G(\bm g)$ are exactly $E(F)$, and $\bm g(\delta_F(v))=0$ for all $v\in V(G)\setminus\{w\}$.  Thus, it must be that $\bm g\in \mc_4$.

\smallskip
In all the above cases, $\bm g\in\mc_1\cup\cdots\cup\mc_5$, as desired.
\end{proof}

\subsection{Hardness Reduction}
\label{sec:hardness}

We will start by proving hardness for the Dantzig circuit-pivot rule.
\begin{thm}\label{thm:oracle1_np_hard}
Solving the optimization problem $Dan(\mcp,\bm x, \bm c)$ is NP-hard. 
\end{thm}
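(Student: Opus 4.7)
The plan is to reduce the (directed) Hamiltonian path problem to $Dan(\mcp,\bm x,\bm c)$, using the characterization of circuits of the fractional matching polytope developed in Section~\ref{sec:frac_matching}. Since the reduction will construct a bipartite host graph, every cycle will be even, so the only circuit families that can appear are $\mc_1$ (even cycles) and $\mc_3$ (simple paths), which simplifies the case analysis considerably.

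Given a directed graph $D=(V,A)$ with $|V|=n$, I would construct a bipartite graph $H$ on vertex set $V^+\cup V^-$ by adding a ``matching edge'' $m_v=\{v^+,v^-\}$ for every $v\in V$, and an ``arc edge'' $\{u^+,v^-\}$ for every arc $(u,v)\in A$. I would then take $\bm x$ to be the characteristic vector of the perfect matching $M=\{m_v:v\in V\}$ (so every degree constraint of $\mcp_\fmat(H)$ is tight at $\bm x$), and choose the integer objective $\bm c$ with $\bm c(m_v)=+1$ on matching edges and $\bm c(e)=-1$ on arc edges.

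Under these choices, feasibility of a circuit direction $\bm g$ (that is, $\bm x+\varepsilon\bm g\in\mcp_\fmat(H)$ for some $\varepsilon>0$) forces $\bm g(\delta_H(v))\le 0$ at every non-leaf node $v$ (since every degree constraint is tight) and $\bm g(e)\ge 0$ on every arc edge (since $\bm x(e)=0$ there). Combining these sign restrictions with the circuit characterization from Section~\ref{sec:frac_matching}, I expect to deduce that every feasible circuit must be an $M$-alternating path or an $M$-alternating cycle in $H$, with $\bm g=-1$ on its matching edges and $\bm g=+1$ on its arc edges, and with any path-endpoints necessarily incident to a matching edge. A direct calculation then yields $-\bm c^\T\bm g=|M\cap F|+|F\setminus M|=|E(F)|$, where $F=\supp(\bm g)$; hence $Dan(\mcp_\fmat(H),\bm x,\bm c)$ reduces to the problem of finding a longest $M$-alternating path or cycle in $H$.

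To finish the reduction, I would project any such alternating circuit of $H$ to $D$ via $v^\pm\mapsto v$, observing that the simplicity of a path/cycle in $H$ forces the projection to be a directed simple path (respectively, directed simple cycle) in $D$, and conversely each directed simple path of $D$ lifts to an $M$-alternating path of $H$ of exactly twice its vertex count minus one. Consequently the optimum of $Dan$ is at least $2n-1$ if and only if $D$ admits a Hamiltonian directed path (a Hamiltonian directed cycle, which gives value $2n$, also implies Hamiltonian path), establishing NP-hardness. The main obstacle I foresee is the careful case-by-case verification that the sign constraints (nonnegativity on arc edges together with non-positivity of every tight degree sum) pin down the signs of $\bm g$ uniquely on any candidate support, and in particular rule out circuits from $\mc_1\cup\mc_3$ whose shape does not arise from a simple directed walk in $D$; this is where the circuit characterization of Section~\ref{sec:frac_matching} is used most directly.
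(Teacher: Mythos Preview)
Your proposal is correct and follows essentially the same route as the paper: reduce from directed Hamiltonian path by splitting each vertex of $D$ into an ``in'' and an ``out'' copy, take $\bm x$ to be the canonical perfect matching on these pairs, and use the circuit characterization of $\mcp_\fmat$ (which on a bipartite host collapses to $\mc_1\cup\mc_3$) to argue that every feasible circuit direction at $\bm x$ is an $M$-alternating path or cycle. The paper's construction is the same split-vertex gadget (their $v_a,v_b$ are your $v^-,v^+$).

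The genuine difference is in how the threshold is enforced. You use uniform costs ($+1$ on matching edges, $-1$ on arc edges), so that $-\bm c^\T\bm g$ simply equals the length of the alternating support, and the decision ``is the optimum $\ge 2n-1$?'' directly encodes Hamiltonian path with unspecified endpoints. The paper instead reduces from the $s,t$-Hamiltonian path problem: it attaches two extra leaf vertices $s',t'$ with edges $s's_a$ and $tt'$ carrying large weights $\pm W$, and sets the matching-edge cost to $0$ and the arc-edge cost to $-1$. The big-$W$ trick forces any near-optimal circuit to use both special edges, pinning the support to an $s',t'$-path; the threshold becomes $2W+n-1$. Your version is cleaner for Theorem~\ref{thm:oracle1_np_hard} in isolation. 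The paper's more elaborate version is there because it simultaneously sets up Corollary~\ref{cor:bestneighborhard} and Theorem~\ref{thm:main2}: with their weights the greatest-improvement circuit from $\chi^M$ lands exactly at an optimal vertex of the LP, so ``is there an optimal neighbor?'' becomes NP-hard. Your cost vector does not have that property, so if you later want those corollaries you would need to modify the gadget.

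One small point worth making explicit in your write-up: when an endpoint of a $\mc_3$-path is a leaf of $H$, there is no tight degree constraint there, so the argument ``$\bm g(\delta_H(v))\le 0$ forces the terminal edge to be a matching edge with $\bm g=-1$'' needs a one-line patch (look at the partner vertex instead, which is internal and forces the sign). You flagged this as the main obstacle, and it is indeed routine, but do spell it out.
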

\begin{proof}
We will prove this via reduction from the directed Hamiltonian path problem.  Let $D=(N,F)$ be a directed graph with 
$n=|N|$, and let $s,t \in N$ be two given nodes.  We will construct a suitable auxiliary undirected graph $H$, cost function $\bm c$, and a matching $M$ in $H$, such that the following holds: $D$ contains a directed Hamiltonian $s,t$-path if and only if an optimal solution to $Dan(\mcp_\fmat(H),\chi^M,\bm c)$ (where $\chi^M$ is the characteristic vector of $M$) attains a certain objective function value.

\smallskip
We start by constructing $H=(V,E)$.  For each node $v\in N\setminus\{t\}$ we create two copies $v_a$ and $v_b$ in $V$. For all $v\in N\setminus\{t\}$, we let $v_av_b\in E$.  For all arcs $uv\in F$, with $u,v\neq t$, we add an edge $u_bv_a\in E$. That is, every in-arc at a node $v$ corresponds to an edge incident with $v_a$, and every out-arc at $v$ corresponds to an edge incident with $v_b$.  We add $t$ in $V$, and for all arcs $ut\in F$, we have that $u_bt\in E$.  Finally, we add nodes $s'$ and $t'$, where $s's_a\in E$ and $tt'\in E$ (see Figure~\ref{fig:one one one case C}).

 \begin{figure}[!ht]
	\begin{center}
			\begin{tikzpicture}[scale=1.2]	
				\tikzset{
					graph node/.style={shape=circle,draw=black,inner sep=0pt, minimum size=4pt
      							  }
						}
				\tikzset{	plus edge/.style={red, , line width=.6mm
      							  }
						}		
				\tikzset{	minus edge/.style={blue, line width=.8mm, dashed
	      						  }
						}	
				\tikzset{	graph edge/.style={black, line width=.6mm
	      						  }
						}
						
				\node[graph node, label=$s'$] (s') at (-1,.5){};
				\node[graph node, label=$s_a$] (s1) at (0,0){};
				\node[graph node, label={below:$s_b$}] (s2) at (0,-1){};
				\node[graph node, label=$v_a$] (v1) at (1,0){};
				\node[graph node, label={below:$v_b$}] (v2) at (1,-1){};
				\node[graph node] (u1) at (2,0){};
				\node[graph node] (u2) at (2,-1){};
				\node[graph node] (w1) at (3,0){};
				\node[graph node] (w2) at (3,-1){};
				\node[graph node] (x1) at (4,0){};
				\node[graph node] (x2) at (4,-1){};
				\node[graph node] (y1) at (5,0){};
				\node[graph node] (y2) at (5,-1){};
				\node[graph node, label=$t$] (t) at (6,0){};
				\node[graph node, label=$t'$] (t') at (7,.5){};
                \draw[minus edge] (s')--(s1);
                \draw[minus edge] (t')--(t);
                \draw[graph edge] (s2)--(v1);
                \draw[graph edge] (v2)--(u1);
                \draw[graph edge] (u2)--(v1);
                \draw[graph edge] (u2)--(w1);
                \draw[graph edge] (w2)--(x1);
                \draw[graph edge] (w1)--(x2);
                \draw[graph edge] (x2)--(y1);
                \draw[graph edge] (y2)--(t);
                
                \draw[plus edge] (s1)--(s2);
                \draw[plus edge] (v1)--(v2);
                \draw[plus edge] (u1)--(u2);
                \draw[plus edge] (w1)--(w2);
                \draw[plus edge] (x1)--(x2);
                \draw[plus edge] (y1)--(y2);
			\end{tikzpicture}
	\end{center}
	\caption{An example of the auxiliary graph $H$.}
	\label{fig:one one one case C}
\end{figure}

Now we define the cost function $\bm c$. We set $\bm c(v_av_b)=0$ for all $v\in N\setminus\{t\}$, $\bm c(s's_a)=-W=-\bm c(tt')$ such that $W\in\Z$, $W\gg|E|$, and let all other edges have cost $-1$. Finally, we let 
$$
M=\big\{v_av_b: v\in N\setminus\{t\}\big\}\cup\{tt'\}
$$
be a matching in $H$. We claim that there exists a directed Hamiltonian $s,t$-path  in $D$ if and only if there is a solution
$\bm g$ to $Dan(\mcp_\fmat(H),\chi^M,\bm c)$ with objective function value at least $-\bm c^\T \bm g = 2W + n-1$.

\smallskip
($\Rightarrow$) Suppose that there exists a directed Hamiltonian $s,t$-path $P=(sv^1,v^1v^2,\cdots,v^{k-1}v^k,v^kt)$ in $D$.  
Then, $P$ can be naturally associated to an $M$-alternating path $P'$ in $H$ with endpoints $s'$ and $t'$, as follows: 
$$
P'=(s's_a,s_as_b,s_bv^1_a,v^1_av^1_b,v^1_bv^2_a,v^2_av^2_b,\cdots,v^{k-1}_av^{k-1}_b,v^{k-1}_bv^k_a,v^k_av^k_b,v^k_bt,tt').
$$  
Let $\bm g$ be defined as
$$
	\bm g(e):=\begin{cases}
				1 &\text{  if  } e\in  E(P')\setminus M,\\ 
				-1 &\text{ if } e\in M, \\ 
				0 &\text{otherwise}.
			\end{cases}\,
$$
Then $\bm g \in \mc_3$, and is therefore a circuit of $\mcp_\fmat (H)$. 
Note that $\chi^M + \bm g \in \mcp_\fmat (H)$, and $-\bm c^\T \bm g = 2W + n-1$. Thus, 
$\bm g$ is a feasible solution to $Dan(\mcp_\fmat(H),\chi^M,\bm c)$ with the claimed objective function
value.

\smallskip
($\Leftarrow$) Now suppose that there is a solution $\bm g$ to $Dan(\mcp_\fmat(H),\chi^M,\bm c)$, with objective function
value at least $2W + n-1$. First, we argue that the support of $\bm g$ is indeed an $M$-alternating path
with endpoints $s'$ and $t'$.

Note that, by construction, $H$ is bipartite, so $\bm g \in \mc_1 \cup \mc_3$. 
In either case, $\bm g\in\{1,0,-1\}^E$.  By our choice of $W$, since $-\bm c^\T\bm g \geq 2W + n-1$, it must be that $\bm g(s's_a)=1$ and $\bm g(tt')=-1$.  Then, since $s'$ and $t'$ are not in any cycles of $H$, necessarily $\bm g \in \mc_3$ and its support is an $s',t'$-path.  It follows that $\bm g$ has at most $|V|-1$ non-zero entries.  Two of the non-zero entries are $\bm g(s's_a)$ and $\bm g(tt')$, and of those that remain, exactly half have value 1. Thus, 
$$
\bm -c^\T \bm g\leq 2W+\frac{1}{2}(|V|-3)=2W+\frac{1}{2}((2n+1)-3)=2W+n-1.
$$
It is clear that the above inequality holds tight only if $\bm g(e)=1$ for $\frac{1}{2}(|V|-3)$ edges of $E\setminus\{s's_a,tt'\}$, all of which have $\bm c(e)=-1$, and $\bm c(f)=0$ for all edges $f$ such that $\bm g(f)=-1$. Since the number of edges $e$ with $\bm g(e)=1$ equals the number of edges $f$ with $\bm g(f)=-1$, we have that $|\supp(\bm g)|=|V|-1$, and therefore $\supp(\bm g)$ is a path $P'$ spanning $H$. Furthermore,
all edges of $M$ are in $E(P')$. By removing the first and the last edge
of $P'$, and by contracting all edges of $M$ that are the form $(v_a v_b)$ (for $v \in N$), we obtain a path that naturally corresponds to a directed  Hamiltonian $s',t'$-path in $D$. 
\end{proof}

Note that the above proof immediately yields the following theorem as a corollary.
\begin{thm}
\label{thm:oracle2_np_hard}
Solving the optimization problem $Great(\mcp,\bm x, \bm c)$ is NP-hard.
\end{thm}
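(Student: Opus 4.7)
The plan is to reuse the reduction from the proof of Theorem~\ref{thm:oracle1_np_hard} verbatim and show that the greatest-improvement optimum coincides with the Dantzig optimum on the instance constructed there. Once this is done, the equivalence already proved---value at least $2W+n-1$ iff $D$ has a directed Hamiltonian $s,t$-path---immediately transfers to $Great(\mcp_\fmat(H),\chi^M,\bm c)$ and yields NP-hardness.

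The key intermediate fact I will establish is that every feasible circuit-augmentation from $\chi^M$ in that instance has maximum step-size $\alpha\le 1$. Because $H$ is bipartite, the characterization in Section~\ref{sec:frac_matching} restricts $\mc(\mcp_\fmat(H))$ to $\mc_1\cup\mc_3$, so every circuit has entries in $\{-1,0,1\}$. Since $\chi^M\in\{0,1\}^E$ and any improving circuit must have at least one coordinate with $\bm g(e)=-1$, the nonnegativity constraint $\chi^M+\alpha\bm g\ge \bm 0$ already forces $\alpha\le 1$.

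Given this bound, for any feasible $(\bm g,\alpha)$ of $Great(\mcp_\fmat(H),\chi^M,\bm c)$ with $-\bm c^\T\bm g>0$ we have $-\bm c^\T(\alpha\bm g)\le -\bm c^\T\bm g$, so the optimum of $Great$ is at most that of $Dan$. In the converse direction, the Hamiltonian-path-induced circuit from the previous proof has its $-1$ entries entirely on edges of $M$, its support is an $M$-alternating $s',t'$-path, and $\bm g(\delta_G(v))=0$ at every internal vertex by alternation; hence $\alpha=1$ is a feasible step and realizes the value $2W+n-1$ in $Great$ as well. The two optima therefore agree at exactly $2W+n-1$ whenever this threshold is attained.

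The only subtlety I expect is verifying the uniform step-size bound $\alpha\le 1$ across both circuit families $\mc_1$ and $\mc_3$---one must confirm that the degree inequalities $(\chi^M+\alpha\bm g)(\delta_G(v))\le 1$ do not create an exceptional case admitting $\alpha>1$. This is immediate, however, since those inequalities can only further tighten $\alpha$ relative to the nonnegativity-based bound. With this observation in hand, the reduction from Theorem~\ref{thm:oracle1_np_hard} transfers essentially word-for-word.
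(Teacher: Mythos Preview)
Your proposal is correct and follows essentially the same approach as the paper. The paper's proof likewise reuses the reduction from Theorem~\ref{thm:oracle1_np_hard} and argues that the maximal step-size from $\chi^M$ along any relevant circuit is $1$, so that $\max\{-\bm c^\T(\alpha\bm g):\chi^M+\alpha\bm g\in\mcp_\fmat(H),\,\alpha>0\}=-\bm c^\T\bm g$ and the two oracles coincide on this instance. Your version is arguably a bit more careful in only claiming $\alpha\le 1$ and then separately invoking the $(\Rightarrow)$ direction of Theorem~\ref{thm:oracle1_np_hard} to get $\alpha=1$ for the Hamiltonian circuit; one small imprecision is the assertion that ``any improving circuit must have at least one coordinate with $\bm g(e)=-1$,'' which as stated ignores the single-edge $+1$ circuits in $\mc_3$---but those are infeasible from $\chi^M$ in this instance (every non-leaf vertex is $M$-saturated), so the conclusion stands.
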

\begin{proof}
The proof is identical to that of Theorem~\ref{thm:oracle1_np_hard}, we only need to replace 
$Dan(\mcp_\fmat(H),\chi^M,\bm c)$ with $Great(\mcp_\fmat(H),\chi^M,\bm c)$.  This is because for any
circuit $\bm y \in \mc (\mcp_\fmat(H))$, we have $\chi^M + 1 \bm y \in \mcp$, and $\chi^M +\alpha \bm y \notin \mcp$ for any $\alpha > 1$.
Therefore, for all $\bm y \in \mc (\mcp_\fmat(H))$ such that $-\bm c^\T \bm y >0$, we have
$$
\max \{ -\bm c^\T ( \alpha \bm y) : \chi^M + \alpha \bm y \in \mcp_\fmat(H) , \, \alpha>0 \} = -\bm c^\T\bm y.
$$
It is not difficult to see that this implies the result.
\end{proof}
We highlight that these hardness results hold indeed for $0/1$ polytopes. In fact, since by our construction
the graph $H$ is bipartite, the polytope $\mcp_\fmat(H)$ is integral.

\subsection{Hardness implications}
\label{sec:alg_cons}
Here we prove that the reductions in the previous section have interesting hardness implications for the Simplex method.

\begin{cor}
\label{cor:}
Given a feasible extreme point solution of a $0/1$-LP, computing the best neighbor extreme point is NP-hard.
\end{cor}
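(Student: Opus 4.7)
The plan is to recycle the reduction from the proof of Theorem~\ref{thm:oracle1_np_hard} (equivalently Theorem~\ref{thm:oracle2_np_hard}) essentially verbatim, and to show that in the bipartite instances it constructs, the Dantzig/greatest-improvement circuit problem coincides with the best-neighbor problem on a 0/1-polytope. The instance is $\mcp_\fmat(H)$ with starting extreme point $\chi^M$ and cost $\bm c$ exactly as defined in the proof of Theorem~\ref{thm:oracle1_np_hard}.

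First I would check, by exhibiting an explicit bipartition of $V(H)$ (for example $L=\{s',\, v_b: v\in N\setminus\{t\}\}\cup\{t'\}$ and $R=\{v_a: v\in N\setminus\{t\}\}\cup\{t\}$), that $H$ is bipartite. Consequently $\mcp_\fmat(H)$ equals the integer matching polytope of $H$, so it is a 0/1-polytope, and $\chi^M$ is an extreme point of it. Next I would observe that bipartiteness rules out any odd cycle in $H$, so the circuit characterization established in Section~\ref{sec:frac_matching} forces $\mc(\mcp_\fmat(H))\subseteq\mc_1\cup\mc_3$. These are exactly the edge-directions of the bipartite matching polytope (differences of characteristic vectors of matchings whose symmetric difference is a single alternating even cycle or alternating path). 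Moreover, because $\chi^M\in\{0,1\}^{E(H)}$ and every $\bm g\in\mc_1\cup\mc_3$ has entries in $\{-1,0,1\}$, the maximal feasible step size in any improving direction is $\alpha=1$, and $\chi^M+\bm g$ is itself the adjacent vertex $\chi^{M\triangle\supp(\bm g)}$.

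With these two observations in place, I would identify the optimization problems. For each circuit $\bm g$ feasible from $\chi^M$, the quantity $-\bm c^\T\bm g=\bm c^\T\chi^M-\bm c^\T(\chi^M+\bm g)$ is precisely the improvement obtained by moving to the adjacent vertex $\chi^M+\bm g$. Therefore $Dan(\mcp_\fmat(H),\chi^M,\bm c)$ (and likewise $Great(\mcp_\fmat(H),\chi^M,\bm c)$, since $\alpha=1$ is forced) is the same optimization problem as finding the best neighbor extreme point of $\chi^M$ in $\mcp_\fmat(H)$ with respect to $\bm c$. Invoking the Hamiltonian-path reduction and its completeness/soundness analysis from the proof of Theorem~\ref{thm:oracle1_np_hard} then yields the claimed NP-hardness of the best-neighbor problem on this 0/1-LP.

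The only nontrivial step is the identification of circuits with edge-directions, which I would discharge by combining the characterization $\mc(\mcp_\fmat(G))=\mc_1\cup\cdots\cup\mc_5$ from Section~\ref{sec:frac_matching} with the (already cited) fact that $\mc_1\cup\mc_3$ are edge-directions of the matching polytope; the remaining verifications (bipartiteness of $H$, unit maximal step size, equivalence of objective values) are routine. No new construction is required beyond noting that the same reduction already produces a 0/1-LP and that on bipartite graphs there is nothing to distinguish the circuit formulation from the edge formulation.
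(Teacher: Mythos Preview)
Your proposal is correct and follows the same approach as the paper: both reuse the Hamiltonian-path reduction of Theorem~\ref{thm:oracle1_np_hard}/\ref{thm:oracle2_np_hard}, observe that $H$ is bipartite so $\mcp_\fmat(H)$ is a $0/1$ polytope, and note that on this instance the feasible circuits at $\chi^M$ coincide with edge-directions (with forced step size $1$), whence the greatest-improvement circuit problem is precisely the best-neighbor problem. The paper's proof is more terse, but you have simply spelled out the same observations in greater detail.
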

\begin{proof}
Consider again the hardness reduction used in the proof of Theorem~\ref{thm:oracle2_np_hard},
and note that the optimal solution of $Great(\mcp_\fmat(H),\chi^M,\bm c)$ is a circuit $\bm g$ that corresponds to an edge-direction at $\chi^M$. As a consequence, if we consider the LP obtained by minimizing $\bm{c^\T}\bm{x}$ over $\mcp_\fmat(H)$, and take $\chi^M$ as an initial vertex solution,
there is a neighbor optimal solution of objective function value $-W -n  +1$ (which is the minimum possible value) if and only if the initial directed graph has a Hamiltonian path. The result follows.
\end{proof}

Note that the above proof also yields a proof for Corollary~\ref{cor:bestneighborhard}. We can now prove Theorem~\ref{thm:main2}, 
that we restate for convenience.

\newtheorem*{thm02}{Theorem~\ref{thm:main2}}
\begin{thm02} Given an LP and an initial feasible solution, finding the shortest (monotone) path to an optimal solution is NP-hard. Furthermore, unless P=NP, it is hard to approximate within a factor strictly better than two.
\end{thm02}

\begin{proof}
Once again, consider the hardness reduction used in the proof of Theorem~\ref{thm:oracle2_np_hard}, and the LP obtained by minimizing $\bm{c^\T}\bm{x}$ over $\mcp_\fmat(H)$. In order for a Hamiltonian path to exist on $D$, the optimal solution of this LP must have objective function
value $-W -n  +1$, so without loss of generality, we can assume that this is the case. Take $\chi^M$ as the initial vertex solution. Under the latter assumption, as noted in the proof of the previous corollary, there is a neighbor optimal solution to $\chi^M$ if and only if $D$ has a Hamiltonian path. This implies the following: (i) if $D$ has a Hamiltonian path, then there is a shortest (monotone) path to an optimal solution on the 1-skeleton of $\mcp_\fmat(H)$, that consists of one edge; (ii) if $D$ does not have a Hamiltonian path, then any shortest (monotone) path to an optimal solution has at least two edges. The result follows.
\end{proof}

As mentioned in the introduction, our result implies that for any efficiently-computable pivoting rule, the Simplex 
method cannot be guaranteed to reach an optimal solution via a minimum number of non-degenerate pivots, unless P=NP. In a way, 
this result is similar in spirit to some hardness results proven about the vertices that the Simplex method can visit during its execution \cite{fearnley+savani,disser+skutella,adler+papadimitriou+rubinstein}.

\smallskip
The latter hardness result also holds for circuit-paths, via the exact same argument.
\begin{cor}
\label{cor:cir}
Given an LP and an initial feasible solution, finding the shortest (monotone) circuit-path to an optimal solution is NP-hard. 
Furthermore, unless P=NP, it is hard to approximate within a factor strictly better than two.
\end{cor}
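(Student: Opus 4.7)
The plan is to reuse the exact reduction of Theorem~\ref{thm:main2} (equivalently, the reduction from directed Hamiltonian path used in Theorem~\ref{thm:oracle1_np_hard}), and then argue that on the specific instance produced by that reduction, circuit-paths starting at $\chi^M$ coincide with edge-paths on the $1$-skeleton of $\mcp_\fmat(H)$. Once this coincidence is established, the reduction already worked out in Theorem~\ref{thm:main2} transfers verbatim: a Hamiltonian $s,t$-path in $D$ corresponds to a monotone circuit-path of length one from $\chi^M$ to an optimal solution, while in its absence every monotone circuit-path to the optimum has at least two circuit-augmentations, giving both NP-hardness and a factor-$2$ inapproximability.

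The key structural ingredient is that $H$ is bipartite by construction, hence has no odd cycles, so the characterization of $\mc(\mcp_\fmat(H))$ from Section~\ref{sec:frac_matching} collapses to $\mc(\mcp_\fmat(H)) \subseteq \mc_1 \cup \mc_3$. I would then argue that if $\bar{\bm x}\in\{0,1\}^{E(H)}$ is a vertex (i.e.\ a matching) and $\bm g\in\mc_1\cup\mc_3$ lies in the feasible cone at $\bar{\bm x}$, a maximal augmentation of step size exactly $\alpha=1$ produces another $0/1$ vector, hence another vertex. This is immediate from the $\pm 1$-structure of vectors in $\mc_1\cup\mc_3$ together with the bounds $0\le \bar{\bm x}+\alpha\bm g\le 1$. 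Moreover, the two vertices obtained differ by a single alternating cycle or single alternating path, so they are adjacent on the $1$-skeleton. Hence, once started at the vertex $\chi^M$, any circuit-path remains entirely on vertices of $\mcp_\fmat(H)$ and is in fact a monotone walk on the $1$-skeleton. Conversely, every edge-step on the $1$-skeleton is itself a circuit augmentation, so the set of monotone circuit-paths from $\chi^M$ equals the set of monotone edge-paths from $\chi^M$.

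With this identification in hand, the proof concludes by citing the two dichotomous cases established in the proof of Theorem~\ref{thm:main2}: (i) if $D$ admits a Hamiltonian $s,t$-path, then $\chi^M$ has an optimal neighbor, so there is a monotone circuit-path of length one to the optimum; (ii) if $D$ has no Hamiltonian $s,t$-path, then no neighbor of $\chi^M$ is optimal, hence every monotone edge-path—and therefore every monotone circuit-path starting from $\chi^M$—to an optimal solution uses at least two augmentations. This yields both NP-hardness of computing the shortest monotone circuit-path and the $(2-\varepsilon)$-inapproximability, exactly as in Theorem~\ref{thm:main2}.

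The only point that requires care, and which I expect to be the mildly delicate step, is verifying that \emph{from a vertex} of $\mcp_\fmat(H)$ every circuit augmentation actually lands on a neighboring vertex (rather than on a higher-dimensional face). This is where bipartiteness of $H$ is essential: without it, circuits in $\mc_4$ or $\mc_5$ would carry $\pm 2$ entries on path portions, and maximal augmentations from a vertex could fail to stay on the $1$-skeleton, breaking the equivalence between circuit-paths and edge-paths. Everything else is inherited from the proof of Theorem~\ref{thm:main2}.
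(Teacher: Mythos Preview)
Your proposal is correct and follows essentially the same approach as the paper. The paper itself gives no separate proof of this corollary, merely stating that ``the latter hardness result also holds for circuit-paths, via the exact same argument,'' having noted in the introduction that the characterization of the circuits of $\mcp_\fmat(H)$ is what makes the transfer possible. Your write-up supplies exactly the details the paper leaves implicit: bipartiteness of $H$ forces $\mc(\mcp_\fmat(H))\subseteq\mc_1\cup\mc_3$, so every maximal circuit-augmentation from a matching has step size $1$, lands on a $0/1$ point (hence a vertex), and the symmetric difference is a single alternating path or cycle, which is precisely the adjacency criterion on the bipartite matching polytope. With circuit-paths from $\chi^M$ thus identified with edge-walks, the dichotomy of Theorem~\ref{thm:main2} carries over verbatim.
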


\smallskip
Note that Theorem~\ref{thm:oracle1_np_hard} and Theorem~\ref{thm:oracle2_np_hard} yield a proof of Theorem~\ref{thm:hardness}.

\section{Approximation of Circuit-Pivot Rules}\label{sec:approximating}

We start with the following formal definition of approximate greatest-improvement augmentations.

\begin{defn}
Let $\gamma \geq 1$, $\bm x \in \mcp$, and $\alpha^*\bm g^*$ be a greatest-improvement augmentation at $\bm x$. We say that an augmentation $\alpha\bm g$ is a $\gamma$-approximate greatest-improvement augmentation at $\bm x$, if 
$$
c^{\T}\bm x-c^{\T}(\bm x +\alpha\bm g)\geq\frac{1}{\gamma}\Big(c^{\T}\bm x-c^{\T}(\bm x +\alpha^*\bm g^*)\Big).
$$
\end{defn}

As mentioned in the introduction, we define
$$
\delta\coloneqq\max\left\{\left|\det\binom{A}{D}\right|\right\},
$$ 
where the max is taken over all $n\times n$ submatrix $\binom{A}{D}$ of $\binom{A}{B}$ such that $\binom{A}{D}$ has rank $n$. 

\smallskip
We also define
$$\bar \delta\coloneqq\max\left\{\left|\det\binom{\bar A}{\bar D}\right|\right\},
$$ 
where the max is taken over all $n\times n$ submatrix $\binom{\bar A}{\bar D}$ of $\binom{A | \bm{b}}{B | \bm{d}}$ such that $\binom{\bar A}{\bar D}$ has rank $n$. 

\smallskip
Furthermore, we let $\omega_1$ be the minimum 1-norm distance from any extreme point to any facet not containing it.
Formally, let $vert(\mcp)$ be the set of vertices of $\mcp$. For a given $\bm v \in vert(\mcp)$, let $\mathcal{F}(\bm v)$ be the set of feasible points of $\mcp$ that lie on any facet $F$ of $\mcp$ with $\bm v \notin F$.

$$
\omega_1:= \min_{\mbox{$\bm v \in vert(\mcp)$, $\bm f \in \mathcal{F}(\bm v)$ } } \| \bm v - \bm f\|_1.
$$

Finally, we let  $M_1$ be the maximum 1-norm distance between any pair of extreme points, i.e.
$$
M_1:= \max_{\mbox{ \small $\bm v_1$,$\bm v_2 \in vert(\mcp)$ } } \| \bm v_1 - \bm v_2\|_1.
$$

\subsection{Approximate greatest-improvement augmentations}
\label{sec:apx}
Let us recall the statement of Lemma~\ref{lem:approximation}: 

\newtheorem*{lemapprox}{Lemma~\ref{lem:approximation}}

\begin{lemapprox}
Consider an LP in the general form~\emph{(\ref{lp})}. Denote by $\delta$ the maximum absolute value of the determinant 
of any $n \times n$ submatrix of $\binom{A}{B}$.  Let $\bm x_0$ be an initial feasible solution,  and let $\gamma\geq 1$.
Using a $\gamma$-approximate greatest-improvement circuit-pivot rule, we can reach an optimal solution $\bm x_{\min}$ 
of \emph{(\ref{lp})} with with $O\big(n\gamma\log\big( \delta \,\bm c^\T (\bm x_0-\bm x_{\min}) \big)\big)$ augmentations.
\end{lemapprox}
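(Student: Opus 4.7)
The plan is to combine a sign-compatible circuit decomposition of the direction to the optimum with a geometric-decay argument, and then convert geometric decay into finite termination via the determinant bound $\delta$.

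First, I would prove the one-step improvement inequality. Let $\bm{x}$ be the current iterate, $\Delta := \bm{c}^\T(\bm{x} - \bm{x}_{\min})$ the current gap, and set $\bm{h} := \bm{x}_{\min} - \bm{x}$. Since $\bm{h} \in \ker(A)$ and $\bm{x} + \bm{h} \in \mcp$, a standard sign-compatible Carath\'eodory-type decomposition yields
\[
\bm{h} \;=\; \sum_{i=1}^{k} \alpha_i\, \bm{g}_i, \qquad k \le n,\ \alpha_i > 0,\ \bm{g}_i \in \mc(A,B),
\]
with each $\bm{g}_i$ sign-compatible with $\bm{h}$ in both $A$- and $B$-coordinates. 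Sign-compatibility forces $|B_j(\alpha_i \bm{g}_i)| \le |B_j \bm{h}|$ componentwise with matching sign, so $\bm{x} + \alpha_i \bm{g}_i \in \mcp$: each summand is a feasible (though not necessarily maximal) augmentation at $\bm{x}$. Taking the inner product with $-\bm{c}$ gives $\Delta = \sum_i \alpha_i(-\bm{c}^\T \bm{g}_i)$, so some $i^*$ satisfies $\alpha_{i^*}(-\bm{c}^\T \bm{g}_{i^*}) \ge \Delta/n$. Hence a maximal greatest-improvement augmentation at $\bm{x}$ yields improvement at least $\Delta/n$, and a $\gamma$-approximate one at least $\Delta/(n\gamma)$.

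Iterating this bound, if $\Delta_t$ denotes the gap after $t$ approximate augmentations, then $\Delta_{t+1} \le \Delta_t\bigl(1 - 1/(n\gamma)\bigr)$, so
\[
\Delta_t \;\le\; \Bigl(1 - \tfrac{1}{n\gamma}\Bigr)^{t}\,\Delta_0 \;\le\; e^{-t/(n\gamma)}\,\Delta_0.
\]
The next step is to convert this geometric decay into a finite termination guarantee via $\delta$. By Cramer's rule every vertex of $\mcp$ has coordinates that are rationals with denominator at most $\delta$, so any two distinct vertex objective values differ by at least $1/\delta^2$. If one can argue that the iterate's objective $\bm{c}^\T\bm{x}_t$ lies in this same discrete grid, then $\Delta_t < 1/\delta^2$ forces $\Delta_t = 0$, so that $\bm{x}_t$ is optimal; choosing $t = \Theta\bigl(n\gamma \log(\delta\,\Delta_0)\bigr)$ pushes $\Delta_t$ below this threshold and yields the claimed bound.

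The main obstacle will be justifying this termination step rigorously, because a maximal circuit augmentation can leave $\bm{x}_t$ in the relative interior of a face rather than at a vertex, and non-vertex points of $\mcp$ can in principle have objective value arbitrarily close to $\bm{c}^\T\bm{x}_{\min}$ without equalling it. I would resolve this either by observing that the maximal-step structure forces $\bm{x}_t$ to satisfy an enlarged system of tight constraints, so that $\bm{c}^\T\bm{x}_t$ has a denominator controllable by $\delta$ inductively, or by following each circuit-augmentation with a cost-preserving pivot onto a vertex of the minimal face containing $\bm{x}_t$, ensuring the $1/\delta^2$-vertex-gap bound applies directly to every counted iterate and completing the $O\bigl(n\gamma\log(\delta\,\bm{c}^\T(\bm{x}_0 - \bm{x}_{\min}))\bigr)$ augmentation count.
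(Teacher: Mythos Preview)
Your decomposition and geometric-decay argument match the paper's proof exactly: sign-compatible circuit representation of $\bm{x}_{\min}-\bm{x}_k$ into $\le n$ summands, feasibility of each $\bm{x}_k+\alpha_i\bm{g}_i$, the $\Delta/(n\gamma)$ one-step bound, and the $(1-1/(n\gamma))^t$ decay are all identical to what the paper does.

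The only place you and the paper diverge is the termination argument, and here your two proposed fixes both have issues. Your first option (showing inductively that $\bm{c}^\T\bm{x}_t$ has denominator bounded by some function of $\delta$) is not obviously workable: a maximal circuit step lands on a facet, but the resulting point can be a non-vertex whose coordinates involve determinants of submatrices larger than $n\times n$, and tracking this through many iterations is messy. Your second option (snapping to a vertex after \emph{every} augmentation) does work logically, but the snapping costs up to $n$ circuit augmentations each time, so the total count becomes $O(n^2\gamma\log(\delta\,\Delta_0))$ rather than $O(n\gamma\log(\delta\,\Delta_0))$.

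The paper's resolution is simpler than either: run the geometric decay until the gap drops below $1/\delta^2$ (this takes $O(n\gamma\log(\delta\,\Delta_0))$ augmentations), then perform \emph{one} snap-to-vertex of at most $n$ additional augmentations (their Lemma~\ref{cor:snap_to_vert}). The resulting vertex $\bm{x}'$ has objective no worse than the current iterate, hence gap $<1/\delta^2$; now the Cramer's-rule argument applies cleanly to the two \emph{vertices} $\bm{x}'$ and $\bm{x}_{\min}$ to force $\bm{x}'=\bm{x}_{\min}$. The extra $n$ steps are absorbed in the big-$O$.
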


The proof of Lemma~\ref{lem:approximation} closely mimics the arguments used in \cite{DL15}. However, since the authors of~\cite{DL15} consider LPs in equality form, for the sake of completeness we will re-state (and sometimes reprove) some of their lemmas for our more general setting. Indeed, when working with circuits, converting an LP to equality form by adding slack variables cannot be done without loss of generality, since this operation might increase the number of circuits (see \cite{Borgwardt+Viss}).

\smallskip
The first proposition that we state is the \emph{sign-compatible representation property} of circuits.  We say two vectors $\bm v$ and $\bm w$ are \emph{sign-compatible with respect to} $B$ if the $i$-th components of the vectors $(B\bm v)$ and $(B\bm w)$ satisfy $(B\bm v)(i)\cdot(B\bm w)(i)\geq 0$ for all $1\leq i\leq m_B$.  The representation property is as follows:

\begin{prop}[see Proposition 1.4 in~\cite{finhold}]\label{prop:sign_compat_sum}
Let $\bm v\in\ker(A)\setminus\set{\bm 0}$. Then we can express $\bm v$ as $\bm v=\sum_{i=1}^k \alpha_i\bm g^i$ such that for all $1\leq i \leq k$
\begin{itemize}
    \item $\bm g^i\in\C(A,B)$,
    \item $\bm g^i$ and $\bm v$ are sign-compatible with respect to $B$ and $\supp(B\bm g^i)\subseteq\supp(B\bm v)$,
    \item $\alpha_i\in\R_{\geq 0}$,
    \item and $k\leq n$.
\end{itemize}
\end{prop}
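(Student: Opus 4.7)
The plan is to realise $\bm v$ as a point in a polyhedral cone whose extreme rays are exactly the positive scalar multiples of the circuits in $\mc(A,B)$ that are sign-compatible with $\bm v$ and satisfy $\supp(B\bm g^i)\subseteq\supp(B\bm v)$, and then to invoke Carath\'eodory's theorem for conic hulls. Let $\sigma_i:=\operatorname{sign}((B\bm v)(i))\in\set{-1,0,+1}$ encode the sign pattern of $B\bm v$, and define
\[
C_\sigma:=\set{\bm y\in\R^n : A\bm y=\bm 0,\ \sigma_i(B\bm y)(i)\geq 0\text{ for all }i,\ (B\bm y)(i)=0\text{ whenever }\sigma_i=0}.
\]
Then $\bm v\in C_\sigma$; every $\bm y\in C_\sigma$ is automatically sign-compatible with $\bm v$ and has $\supp(B\bm y)\subseteq\supp(B\bm v)$; and $C_\sigma\subseteq\ker(A)$ has dimension at most $n-\rk(A)\leq n$. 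Moreover $C_\sigma$ is pointed: any element of its lineality space satisfies $A\bm y=\bm 0$ and $B\bm y=\bm 0$, and hence lies in $\ker\binom{A}{B}=\set{\bm 0}$ by the standing assumption that $\binom{A}{B}$ has rank $n$.

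The central step is to identify the extreme rays of $C_\sigma$ with positive scalar multiples of circuits in $\mc(A,B)$. Given an extreme ray $\bm y$ of $C_\sigma$, I would argue by contradiction that $B\bm y$ is support-minimal in $\set{B\bm z:\bm z\in\ker(A),\,\bm z\neq\bm 0}$: if some non-zero $\bm z\in\ker(A)$ satisfied $\supp(B\bm z)\subsetneq\supp(B\bm y)$, then the strict sign conditions on $\supp(B\bm y)$ would be preserved under perturbations of sufficiently small size $\varepsilon>0$, and the equality conditions at indices outside $\supp(B\bm v)$ would be preserved because $\supp(B\bm z)\subseteq\supp(B\bm y)\subseteq\supp(B\bm v)$. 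Hence $\bm y\pm\varepsilon\bm z\in C_\sigma$, contradicting extremality. A short cancellation argument (clearing one coordinate of the $B$-image) then shows that two vectors in $\ker(A)$ whose $B$-images share the same support-minimal support must be scalar multiples, using injectivity of $B|_{\ker(A)}$ which again follows from the rank assumption. Rescaling $\bm y$ to co-prime integer entries therefore produces a circuit $\bm g\in\mc(A,B)$.

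Since $C_\sigma$ is a pointed rational polyhedral cone, $\bm v\in C_\sigma$ lies in the conic hull of the extreme rays of $C_\sigma$, and Carath\'eodory's theorem for conic hulls yields a representation using at most $\dim C_\sigma\leq n$ of them. Absorbing the rescaling factors into non-negative coefficients $\alpha_i$ produces the desired decomposition $\bm v=\sum_{i=1}^k\alpha_i\bm g^i$ with $k\leq n$, where each $\bm g^i\in\mc(A,B)$ is sign-compatible with $\bm v$ and satisfies $\supp(B\bm g^i)\subseteq\supp(B\bm v)$ simply by virtue of lying in $C_\sigma$. The main obstacle is the extreme-ray characterisation in the previous paragraph: the perturbation argument must verify that \emph{all} defining conditions of $C_\sigma$ (both the inequality and the equality ones) are preserved by $\bm y\pm\varepsilon\bm z$. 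Once this is settled, the dimension bound $k\leq n$ and the claimed properties of the $\bm g^i$ follow routinely from Carath\'eodory and from membership in $C_\sigma$.
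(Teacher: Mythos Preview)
The paper does not include its own proof of this proposition; it simply cites Proposition~1.4 of~\cite{finhold} and uses the result as a black box. So there is no ``paper's proof'' to compare against.

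Your argument is correct and is in fact the standard one (and almost certainly the one in the cited reference): restrict to the pointed rational cone $C_\sigma$ cut out by the sign pattern of $B\bm v$, identify its extreme rays with circuits, and apply Carath\'eodory for conic hulls. The verification you flag as the ``main obstacle''---that $\bm y\pm\varepsilon\bm z\in C_\sigma$ when $\supp(B\bm z)\subsetneq\supp(B\bm y)$---goes through exactly as you sketch: on indices in $\supp(B\bm y)$ the strict sign survives small perturbation; on indices outside $\supp(B\bm y)$ (whether $\sigma_i\neq 0$ or $\sigma_i=0$) the assumption $\supp(B\bm z)\subseteq\supp(B\bm y)$ forces $(B\bm z)(i)=0$, so nothing changes. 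Linear independence of $\bm y$ and $\bm z$ follows from injectivity of $B|_{\ker(A)}$ together with $\supp(B\bm z)\neq\supp(B\bm y)$.

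Two minor remarks. First, the ``cancellation argument'' showing that two kernel vectors with the same support-minimal $B$-support are proportional is not strictly needed for the proposition: once you know $B\bm y$ is support-minimal, $\bm y$ already \emph{is} a circuit by definition, and since $C_\sigma$ is a rational cone you may take a rational generator and clear denominators to land in $\mc(A,B)$. Second, your dimension bound $\dim C_\sigma\leq n-\rk(A)\leq n$ is what Carath\'eodory needs; the proposition only claims $k\leq n$, so you are fine (and in fact slightly sharper).
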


Let $\bm x_{\max}$ be a maximizer
of the LP problem (\ref{lp}), i.e., an optimal solution of the LP obtained from (\ref{lp}) by multiplying
the objective function by $-1$. We will use the following lemma from~\cite{DL15} based on well-known estimates 
of \cite{Ahuja+Magnanti+Orlin}:

\begin{lem}[see Lemma 1 in~\cite{DL15}]
\label{lem:within_epsilon}
Let $\epsilon>0$ be given. Let $\bm c$ be an integer vector.
 Define $f^{\min}:=\bm c ^\T \bm x_{\min}$, $f^{\max}:=\bm c ^\T \bm x_{\max}$.
Suppose that $f^k=\bm c ^\T \bm x_k$ is the objective function value of the solution $\bm x_k$ at the $k$-th iteration of an
augmentation algorithm. Furthermore, suppose that the algorithm guarantees that for every augmentation $k$,
\[
(f^k-f^{k+1})\geq\beta(f^k-f^{\min}).
\] 
Then the algorithm reaches a solution with $f^k-f^{\min}<\epsilon$ in no more than $2 \log{((f^{\max}-f^{\min})/\epsilon)}/\beta$ augmentations.
\end{lem}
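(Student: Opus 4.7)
The plan is to exploit the hypothesis to obtain geometric decay of the optimality gap, and then invert that decay into a logarithmic iteration bound.

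First, I set $g^k := f^k - f^{\min}$ and rewrite the given inequality $(f^k - f^{k+1}) \geq \beta(f^k - f^{\min})$ as $g^{k+1} \leq (1-\beta)\, g^k$; this is immediate by subtracting $f^{\min}$ from both sides of $f^{k+1} \leq f^k - \beta(f^k - f^{\min})$. Iterating this relation yields $g^k \leq (1-\beta)^k g^0$, and since $\bm x_0 \in \mcp$ we have $g^0 \leq f^{\max} - f^{\min}$, giving
\[
f^k - f^{\min} \leq (1-\beta)^k (f^{\max} - f^{\min}).
\]

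Next, to guarantee $f^k - f^{\min} < \epsilon$ it suffices to require $(1-\beta)^k (f^{\max}-f^{\min}) < \epsilon$. Taking logarithms and using the elementary bound $-\log(1-\beta) \geq \beta$ (which follows at once from the Taylor expansion $-\log(1-\beta) = \beta + \beta^2/2 + \beta^3/3 + \cdots$), it suffices to choose
\[
k \;\geq\; \frac{\log\big((f^{\max}-f^{\min})/\epsilon\big)}{\beta}.
\]
The factor of $2$ in the stated bound is a conservative rounding that absorbs both integer rounding and any looseness in the logarithmic estimate (for instance, replacing $-\log(1-\beta) \geq \beta$ by the weaker $-\log(1-\beta) \geq \beta/2$ that holds more robustly in a certain range).

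There is essentially no obstacle here: the argument is a one-step induction followed by a standard logarithmic estimate. The only item worth flagging is that the hypothesis implicitly requires $\beta \in (0,1]$, since otherwise the decay factor $(1-\beta)$ would be non-positive and the iteration would stop making sense; but this is automatically enforced because $f^{k+1} \geq f^{\min}$ forces $(f^k - f^{k+1}) \leq (f^k - f^{\min})$, so the inequality is vacuous or implies $\beta \leq 1$ at every non-optimal iterate.
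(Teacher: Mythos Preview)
The paper does not actually prove this lemma: it is quoted verbatim from \cite{DL15} (with attribution to estimates in \cite{Ahuja+Magnanti+Orlin}) and used as a black box, so there is no in-paper proof to compare against. Your argument is the standard one and is correct: the hypothesis gives geometric contraction $g^{k+1}\le(1-\beta)g^k$ of the optimality gap, and inverting $(1-\beta)^k(f^{\max}-f^{\min})<\epsilon$ via $-\log(1-\beta)\ge\beta$ yields the claimed logarithmic bound, with the factor $2$ serving only as slack.
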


We now state the following easy lemma, that we reprove for completeness. 

\begin{lem}\label{cor:snap_to_vert} 
Let $\bar{\bm x}$ be any feasible solution of the LP problem~\emph{(\ref{lp})}.  Then with a sequence of at most $n$ maximal augmentations, we can reach an extreme point $\hat{\bm x}$ of~\emph{(\ref{lp})} such that $\bm c^\T\hat{\bm x}\leq\bm c^\T\bar{\bm x}$.
\end{lem}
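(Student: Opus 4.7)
The plan is to argue by induction on $n - r$, where $r := \rk\binom{A}{B_{T(\bar{\bm x})}}$ is the rank of the tight-constraint submatrix at $\bar{\bm x}$. If $r = n$, then $\bar{\bm x}$ is already an extreme point and no augmentations are needed. Otherwise, it suffices to perform one circuit augmentation that does not increase $\bm c^\T$ and strictly increases the rank of the tight-constraint matrix; iterating yields an extreme point in at most $n - r \leq n$ augmentations. Note that reaching an extreme point with non-increasing objective requires the LP to be bounded below, which holds in every intended application (e.g.\ Lemma~\ref{lem:approximation}, where the optimum $\bm x_{\min}$ is assumed to exist).

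For the inductive step, the lineality space $L := \{\bm z : A\bm z = \bm 0,\; B_{T(\bar{\bm x})} \bm z = \bm 0\}$ of the feasible cone at $\bar{\bm x}$ has dimension $n - r \geq 1$. I would pick any $\bm z \in L \setminus \{\bm 0\}$ and, after flipping sign if needed, assume $\bm c^\T \bm z \leq 0$. Applying Proposition~\ref{prop:sign_compat_sum} gives a decomposition $\bm z = \sum_i \alpha_i \bm g^i$ with $\bm g^i \in \mc(A,B)$, $\alpha_i \geq 0$, and $\supp(B\bm g^i) \subseteq \supp(B\bm z)$. Since $B_{T(\bar{\bm x})}\bm z = \bm 0$, the support containment forces $B_{T(\bar{\bm x})}\bm g^i = \bm 0$, so every circuit in the decomposition also lies in $L$. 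From $\sum_i \alpha_i \bm c^\T \bm g^i = \bm c^\T \bm z \leq 0$ with $\alpha_i \geq 0$, at least one circuit $\bm g^{i^*}$ satisfies $\bm c^\T \bm g^{i^*} \leq 0$; this is our candidate.

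The main obstacle is to guarantee that a \emph{maximal} augmentation along $\pm\bm g^{i^*}$ exists (i.e., is bounded) while preserving $\bm c^\T \leq \bm c^\T \bar{\bm x}$. Since $\pm\bm g^{i^*} \in L$, both signs are feasible directions at $\bar{\bm x}$. Pointedness of $\mcp$, which follows from $\rk\binom{A}{B} = n$, prevents $\bm g^{i^*}$ and $-\bm g^{i^*}$ from both being recession directions of $\mcp$: otherwise $B\bm g^{i^*} = \bm 0$ together with $A\bm g^{i^*} = \bm 0$ would force $\bm g^{i^*} = \bm 0$, contradicting that $\bm g^{i^*}$ is a circuit. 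If $\bm c^\T \bm g^{i^*} < 0$, then $+\bm g^{i^*}$ must be non-recession (else the LP would be unbounded below, contradicting the existence of an optimum), so a maximal augmentation along $+\bm g^{i^*}$ is well-defined and strictly improving. If $\bm c^\T \bm g^{i^*} = 0$, I would pick whichever of $\pm\bm g^{i^*}$ is non-recession; its objective contribution is $0$.

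Finally, I would verify that the augmentation strictly increases the rank of the tight constraints. The maximal augmentation terminates when some new inequality $B_{j_0}\bm x \leq d_{j_0}$ (with $j_0 \notin T(\bar{\bm x})$) becomes tight, so $(B\bm g^{i^*})(j_0) \neq 0$ while $A\bm g^{i^*} = \bm 0$ and $B_{T(\bar{\bm x})}\bm g^{i^*} = \bm 0$; hence $B_{j_0}$ is not in the row span of $\binom{A}{B_{T(\bar{\bm x})}}$, and the rank of the tight-constraint matrix at the new point is at least $r+1$. Applying the inductive hypothesis there completes the argument, giving at most $n - r - 1$ further augmentations for a total of at most $n$.
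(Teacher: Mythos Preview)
Your proof is correct and follows essentially the same approach as the paper: pick a non-increasing direction in the lineality space $\ker\binom{A}{B_{T(\bar{\bm x})}}$, take a maximal step, observe that the rank of the tight-constraint matrix strictly increases, and iterate. The paper's version is terser---it selects an arbitrary direction in that kernel and only remarks at the end that Proposition~\ref{prop:sign_compat_sum} allows one to use a circuit---whereas you explicitly extract a circuit via the sign-compatible decomposition and are more careful about justifying boundedness of the step (via pointedness and the assumed existence of $\bm x_{\min}$); both of these refinements are sound but not strictly necessary for the argument to go through.
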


\begin{proof}
Let $T=\set{i:B_i\bar{\bm x}=\bm d(i)}$.  
If $\bar{\bm x}$ is not a vertex, then we can select any direction $\bm g\in \ker\binom{A}{B_T}$ such that $\bm c^\T\bm g\leq0$, 
and such that for some $\epsilon>0$, $\tilde{\bm x}:=  \bar{\bm x}+\epsilon\bm g$ satisfies $B_i\tilde{\bm x}\leq \bm d(i)$ for all $i \notin T$.
We then use $\bm g$ to perform a maximal step $\alpha\bm g$ at $\bar{\bm x}$.  Since the step is maximal, there exists an index $i\notin T$ such that $B_i(\bar{\bm x}+\alpha\bm g) =\bm d(i)$.   This enables us to grow the set $T$ at the new feasible solution. Furthermore, $\bm c^\T(\bar{\bm x}+\alpha\bm g)\leq\bm c^\T\bar{\bm x}$.\\
We can iterate this process, and note that the number of linearly independent rows of $\binom{A}{B_T}$ increases by one at each step. Therefore, after at most $n-\rk(A)$ iterations we arrive at a vertex $\hat{\bm x}$.

Note that the above argument does not require the use of circuits, but it requires only that the selected directions are improving with respect to $\bm c$.  By the sign-compatible representation property of circuits though, at any non-optimal point $\bar{\bm x}$, there always exists an improving direction that is a circuit.  
\end{proof}

\smallskip

We can now give a proof of Lemma~\ref{lem:approximation}.

\begin{proof}[Proof of Lemma~\ref{lem:approximation}]
Let $\bm{x}_k$ be the solution at the $k$-th iteration of an augmentation algorithm.  By the sign-compatible representation property of the circuits,

$$\bm x_{\min}-\bm x_k = \sum_{i=1}^p \alpha_i\bm g^i $$
where $\bm g^i \in \C(A,B)$ and  $p \leq n$. Note that, as a consequence of Proposition \ref{prop:sign_compat_sum}, 
for any $i$ we have that $\alpha_i \geq 0 $ and $\bm x_k+\alpha_i \bm g^i$ is feasible. More precisely, 
$A(\bm x_k+\alpha_i \bm g^i)=A \bm x_k+\alpha_i A \bm g^i=A \bm x_k= \bm b$. Furthermore, we know $B \bm x_k \leq \bm d$
and $B (\bm x_{\min})= B(\bm x_k+ \sum_{i=1}^p \alpha_i \bm g^i) \leq \bm{d}$. This and the sign-compatibility of $\bm g^i$ and 
$(\bm x_{\min}-\bm x_k)$ implies that $B(\bm x_k+\alpha_i \bm g^i) \leq \bm d$ for all $i=1 \dots p$.

We then have
\[
  0>\bm c^\T (\bm x_{\min}-\bm x_k)
   =\bm c^\T \sum_{i=1}^p\alpha_i\bm g^i
   = \sum_{i=1}^p \alpha_i\bm c^\T\bm g^i
   \geq -n\Delta,
\]
where $\Delta>0$ is the largest value of $-\alpha\bm c^\T\bm z$ over all $\bm z\in\C(A,B)$  and $\alpha>0$ for which 
$\bm x_k+\alpha\bm z$ is feasible. Those circuit vectors and coefficients include the $\bm g^i, \alpha_i$ above in particular. 
Equivalently, we get
\[
  \Delta\geq\frac{\bm c^\T (\bm x_k-\bm x_{\min})}{n}.
\]

Now let $\alpha\bm z$ be a $\gamma$-approximate greatest-improvement augmentation applied to $\bm x_k$, leading to $\bm x_{k+1}:=\bm x_k+\alpha\bm z$. 
Since $-\alpha\bm c^\T\bm z\geq \frac{1}{\gamma} \Delta$, we get
\[
  \bm c^\T (\bm x_k-\bm x_{k+1})=-\alpha\bm c^\T\bm z\geq\frac{1}{\gamma}\Delta
  \geq\frac{\bm c^\T (\bm x_k-\bm x_{\min})}{\gamma n}.
\]
Thus, we have at least a factor of $\beta=\frac{1}{\gamma n}$ of objective function value decrease at each augmentation. Applying~\autoref{lem:within_epsilon} with $\epsilon=1/\delta^2$ then yields a solution $\bar{\bm x}$ with $\bm c^\T(\bar{\bm x}-\bm x_{\min})<1/\delta^2 $, obtained within at most $4n\gamma\,\log(\delta\,\bm c^\T (\bm x_0-\bm x_{\min}))$ augmentations.

\smallskip
By \autoref{cor:snap_to_vert}, a vertex
solution $\bm x'$ with $\bm c^\T\bm x'\leq\bm c^\T\bar{\bm x}$ can be reached from $\bar{\bm x}$ in at most $n$ additional augmentations.  It remains to prove that  $\bm x'$ is optimal.

\smallskip
Suppose $\bm x'$ is a non-optimal vertex.  There exist subsets $T_1$ and $T_2$ of $\set{1,\ldots,m_B}$ such that $\bm x'$ is the unique solution to $$\binom{A}{B_{T_1}}\bm x = \binom{\bm b}{\bm d(T_1)},$$ and $\bm x_{\min}$ is the unique solution to $$\binom{A}{B_{T_2}}\bm x=\binom{\bm b}{\bm d(T_2)}.$$ 
Let $\delta_1=|\det\binom{A}{B_{T_1}}|$ and $\delta_2=|\det\binom{A}{B_{T_2}}|$.  By Cramer's rule, the entries of $
\bm x'$ are integer multiples of $\frac{1}{\delta_1}$ and the entries of $\bm x_{\min}$ are integer multiples of $\frac{1}{\delta_2}$.  Then, by letting $\delta'=\lcm(\delta_1,\delta_2)$, we have that the entries of $(\bm x' - \bm x_{\min})$ are integer multiples of $\frac{1}{\delta'}$.  Since $\bm c$ is an integer vector, we have that $\bm c^\T(\bm x' - \bm x_{\min})\geq\frac{1}{\delta'}$, and by the definition of $\delta$, we have that $\frac{1}{\delta'}\geq\frac{1}{\delta^2}$.  This is a contradiction to the fact that $\bm c^\T(\bm x'-\bm x_{\min})<1/\delta^2$.
\end{proof}

Note that the above proof also establishes that the result obtained by~\cite{DL15} regarding the number greatest-improvement augmentations needed to solve an equality form LP extends to the general-form LP (trivially by taking $\gamma=1$). Therefore, an easy corollary of the above result shows a (weakly) polynomial bound on the circuit-diameter of a rational polyhedron. We recall the statement of Corollary~\ref{cor:circuit_diameter}:

\newtheorem*{cordiam}{Corollary~\ref{cor:circuit_diameter}}
\begin{cordiam} 
There exists a polynomial function $f(m,\alpha)$ that bounds above the circuit-diameter
of any rational polyhedron $\mcp=\pol$ with $m$ row constraints and maximum encoding length among the coefficients in
its description equal to $\alpha$.
\end{cordiam}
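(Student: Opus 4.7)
The plan is to obtain Corollary~\ref{cor:circuit_diameter} as an almost immediate consequence of Lemma~\ref{lem:approximation} applied with $\gamma=1$ (i.e., using the exact greatest-improvement circuit-pivot rule). Given any ordered pair of vertices $\bm u,\bm v$ of $\mcp$, I would exhibit a suitable integer objective vector $\bm c$ such that $\bm v$ is the unique minimizer of $\bm c^\T \bm x$ on $\mcp$, then run greatest-improvement augmentations starting at $\bm x_0:=\bm u$, and finally bound the resulting circuit-path length via Lemma~\ref{lem:approximation}. The maximum over all pairs $(\bm u,\bm v)$ then yields the desired polynomial bound $f(m,\alpha)$ on the circuit-diameter.

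For the construction of $\bm c$, I would exploit the fact that $\bm v$ is a vertex: there is a subset $T$ of row indices of $B$ such that $\binom{A}{B_T}$ has rank $n$ and $\bm v$ is the unique solution of $\binom{A}{B_T}\bm x=\binom{\bm b}{\bm d(T)}$. Setting $\bm c:=-B_T^\T \bm 1$, one checks that for every $\bm x\in\mcp$ one has $\bm c^\T\bm x - \bm c^\T\bm v = \sum_{i\in T}(\bm d(i)-B_i\bm x)\ge 0$, with equality forcing $B_i\bm x=\bm d(i)$ for every $i\in T$ and hence, together with $A\bm x=\bm b$, $\bm x=\bm v$. So $\bm v$ is the unique minimizer of $\bm c^\T\bm x$ on $\mcp$; moreover $\bm c$ is integer with entries of absolute value at most $m\cdot 2^\alpha$, so its encoding length is polynomial in $m,\alpha$.

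It then remains to check that $\log(\delta\,\bm c^\T(\bm u-\bm v))$ is polynomial in $m,\alpha$. By Hadamard's inequality, $\delta\le (n\cdot 2^\alpha)^n$, so $\log \delta$ is polynomial in $m,\alpha$ (recall $n\le m$). By Cramer's rule applied to the nonsingular linear systems that respectively define $\bm u$ and $\bm v$, all coordinates of $\bm u$ and $\bm v$ are rationals whose numerators and denominators have encoding length polynomial in $m,\alpha$; combined with the above bound on $\|\bm c\|_\infty$ this gives $|\bm c^\T(\bm u-\bm v)|\le 2^{p(m,\alpha)}$ for some polynomial $p$. Substituting into the bound of Lemma~\ref{lem:approximation} produces a circuit-path from $\bm u$ to $\bm v$ of length polynomial in $m,\alpha$, and taking the maximum over all ordered pairs of vertices completes the proof.

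I do not foresee a serious obstacle: the one nontrivial ingredient is choosing an integer objective $\bm c$ that simultaneously has polynomial encoding length and has $\bm v$ as its unique minimizer on $\mcp$, and the explicit choice $\bm c=-B_T^\T \bm 1$ delivers both properties at once. Everything else is routine size arithmetic via Hadamard's inequality and Cramer's rule applied to submatrices of the description of $\mcp$.
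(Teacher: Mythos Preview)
Your proposal is correct and follows essentially the same approach as the paper: choose an integer objective making the target vertex the unique minimizer (the paper takes $\bm c$ to be minus the sum of the rows of $A$ and $B_T$, which is equivalent to your choice since the $A$-rows contribute nothing to $\bm c^\T(\bm u-\bm v)$), apply Lemma~\ref{lem:approximation} with $\gamma=1$, and bound $\delta$ and $\bm c^\T(\bm u-\bm v)$ via Hadamard and Cramer. The size estimates you sketch are the same as the paper's.
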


\begin{proof}
Consider a polyhedron $\mcp = \pol$, and without loss of generality assume $m = m_{A} + m_{B} \geq n$. Recall that, 
by possibly scaling, we can assume that all coefficients in its description (i.e., all entries of $A,B,\bm{b}, \bm{d})$ are integers.
Let $\bm{x}$ and $\bm{\bar x}$ be two extreme points of $\mcp$. Let $T:=\{i : B_i \bm{\bar x} = \bm{d}(i)\}$.
Let $\bm c^\T$  be the vector obtained by adding the rows of $A$ and $B_T$, multiplied by $-1$.
By construction, using this vector as an objective function, $\bm{\bar x}$ is the unique optimal solution to the LP problem~(\ref{lp}).
Lemma~\ref{lem:approximation} shows that we can reach $\bm{\bar x}$ from $\bm{x}$ with 
$O(n\log\big( \delta \,\bm c^\T (\bm x-\bm {\bar x}) \big))$ augmentations. Note that $\bm c^\T (\bm x-\bm{ \bar x}) \leq ||\bm c||_{\infty} ||\bm x-\bm{ \bar x}||_{1}$. The result then follows by observing that $\log(||\bm c||_{\infty}) = O(\alpha + \log m)$,
$\log(||\bm x-\bm{ \bar x}||_{1} )\leq \log(2n \bar \delta)$ (using Cramer's rule), and $\log(\delta) \leq \log(\bar \delta)= O(n(\alpha+\log n))$.
\end{proof}

\subsection{Steepest-descent circuit-pivot rule}
\label{sec:steep_pr}

Using the  approximation result developed in the previous section, we here give a new bound on the number of steepest-descent augmentations needed to solve a bounded LP.  

\begin{thm}
\label{thm:steep_aprrox_deep}
Let $\omega_1$ denote the minimum 1-norm distance from any extreme point $\bm v \in \mcp$ to any facet $F$ of $\mcp$ such that $\bm v\notin F$.  Let
$M_1$ be the maximum 1-norm distance between any pair of extreme points of $\mcp$.
Using a steepest-descent circuit-pivot rule, a circuit-augmentation algorithm reaches an optimal solution $\bm x_{\min}$ of a bounded LP \emph{(\ref{lp})} from any initial feasible solution
$\bm x_0$, performing 
$$\mathcal{O}\left(n^2 \frac{M_1}{\omega_1} \,\log\big(\delta \,\bm c^\T (\bm x_0-\bm x_{\min})\big)\right)$$  augmentations. 
\end{thm}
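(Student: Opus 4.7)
The plan is to follow the template of Lemma~\ref{lem:approximation}, showing directly that each steepest-descent augmentation decreases the objective gap by at least a $\Theta(\omega_1/(n^2 M_1))$ fraction of the current gap $\bm c^\T(\bm x - \bm x_{\min})$. Lemma~\ref{lem:within_epsilon} applied with $\beta = \Theta(\omega_1/(n^2 M_1))$ and $\epsilon = 1/\delta^2$, combined with a final snap-to-vertex step (Lemma~\ref{cor:snap_to_vert}), then yields the claimed $O(n^2(M_1/\omega_1)\log(\delta\,\bm c^\T(\bm x_0 - \bm x_{\min})))$ bound, exactly as at the end of the proof of Lemma~\ref{lem:approximation}.

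The key estimate proceeds in two stages. First, I would apply the sign-compatible decomposition (Proposition~\ref{prop:sign_compat_sum}) to write $\bm x_{\min}-\bm x = \sum_{i=1}^{p}\alpha_i \bm g^i$ with $p\leq n$ circuits $\bm g^i \in \mc(A,B)$, each $\bm x + \alpha_i\bm g^i \in \mcp$. Since both $\bm x$ and $\bm x + \alpha_i\bm g^i$ lie in the bounded polytope $\mcp$, we have $\alpha_i\|\bm g^i\|_1 \leq M_1$, and hence $\sum_i \alpha_i\|\bm g^i\|_1 \leq n M_1$. The defining property of the steepest-descent pivot rule gives $-\bm c^\T \bm g^{sd}/\|\bm g^{sd}\|_1 \geq -\bm c^\T \bm g^i/\|\bm g^i\|_1$ for every $i$; multiplying each inequality by $\alpha_i\|\bm g^i\|_1$ and summing yields
\begin{equation*}
    \frac{-\bm c^\T \bm g^{sd}}{\|\bm g^{sd}\|_1} \cdot \sum_{i=1}^{p}\alpha_i\|\bm g^i\|_1 \;\geq\; -\bm c^\T(\bm x_{\min}-\bm x) \;=\; \bm c^\T(\bm x - \bm x_{\min}),
\end{equation*}
so $-\bm c^\T \bm g^{sd}/\|\bm g^{sd}\|_1 \geq \bm c^\T(\bm x - \bm x_{\min})/(n M_1)$. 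Multiplying through by $\alpha^{sd}\|\bm g^{sd}\|_1$ reduces everything to lower-bounding the maximal step length $\alpha^{sd}\|\bm g^{sd}\|_1$.

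The second stage is to establish $\alpha^{sd}\|\bm g^{sd}\|_1 \geq \omega_1/(n+1)$. When $\bm x$ is a vertex, the maximal augmentation lands on a facet $F$ of $\mcp$ that does not contain $\bm x$, so $\alpha^{sd}\|\bm g^{sd}\|_1 \geq \omega_1$ follows directly from the definition of $\omega_1$. For a general iterate, I would invoke Carathéodory's theorem to write $\bm x = \sum_j \lambda_j \bm v_j$ as a convex combination of at most $n+1$ vertices of the face of $\mcp$ on which $\bm x$ sits; since $\bm x \notin F$, some vertex $\bm v_{j^*}$ in the decomposition lies off $F$, and its weight $\lambda_{j^*}$ (or, alternatively, its contribution to $d - \bm b^\T\bm x$ when $\bm b^\T\bm x \leq d$ defines $F$) is at least $1/(n+1)$ after an appropriate choice, which propagates the vertex-to-facet $\omega_1$ bound into the $\omega_1/(n+1)$ bound on the step. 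The hard part will be exactly this last estimate: $\omega_1$ is defined as a distance from a vertex to a \emph{facet} inside $\mcp$, whereas the quantity actually controlling the step is the distance from $\bm x$ to the supporting hyperplane of $F$, and in general the former can strictly exceed the latter, so the Carathéodory-style reduction requires a careful choice of $\bm v_{j^*}$ to avoid losing additional polynomial factors. Once this bound is secured, plugging $\beta = \Theta(\omega_1/(n^2 M_1))$ into Lemma~\ref{lem:within_epsilon} and finishing with a snap-to-vertex (at most $n$ additional augmentations) completes the argument along the same lines as in the proof of Lemma~\ref{lem:approximation}.
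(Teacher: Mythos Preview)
Your first stage is correct and is in fact a cleaner route than the paper's (which goes through an intermediate comparison with the greatest-improvement circuit): the sign-compatible decomposition together with $\alpha_i\|\bm g^i\|_1\le M_1$ and the maximality of $-\bm c^\T\bm g^{sd}/\|\bm g^{sd}\|_1$ over all feasible circuit directions immediately gives the steepness bound you state.

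The second stage, however, does not go through. The inequality $\alpha^{sd}\|\bm g^{sd}\|_1\ge \omega_1/(n+1)$ for an \emph{arbitrary} iterate $\bm x\in\mcp$ is false. Take $\mcp=[0,1]^2$, $\bm c=(-1,0)$, and $\bm x=(1-\epsilon,\tfrac12)$ for small $\epsilon>0$. The steepest-descent circuit is $\bm g^{sd}=(1,0)$, the maximal step hits the facet $F=\{x_1=1\}$, and $\alpha^{sd}\|\bm g^{sd}\|_1=\epsilon$, while $\omega_1=1$. Your Carath\'eodory sketch cannot rescue this: in any convex representation $\bm x=\sum_j\lambda_j\bm v_j$, the total weight on vertices off $F$ equals $\epsilon$ (since $1-x_1=\sum_{j:\bm v_j\notin F}\lambda_j$), so no choice of $\bm v_{j^*}$ has $\lambda_{j^*}\ge 1/(n+1)$. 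More generally, the step length is governed by the slack $d-\bm b^\T\bm x$ at the blocking facet, and this slack can be made arbitrarily small at non-vertex iterates regardless of $\omega_1$.

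The paper sidesteps this entirely: it first snaps to a vertex $\bm x'$ (at most $n$ augmentations), performs one steepest-descent augmentation there---where the step length is genuinely $\ge\omega_1$ because $\bm x'$ is a vertex not on the blocking facet---and then snaps back to a vertex in at most $n-1$ further augmentations. Each such block of $\le n$ augmentations is then an $(M_1/\omega_1)$-approximate greatest-improvement augmentation between vertices, and Lemma~\ref{lem:approximation} applies. If you want to keep your direct per-step analysis, you must likewise interleave snap-to-vertex phases; the lower bound on step length is only available at vertices.
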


\begin{proof} 
First, we can apply~\autoref{cor:snap_to_vert} to move from $\bm x_0$ to an extreme point $\bm x'$ of the LP in at most $n$ steps.

Let $\bm{\hat z}$ be an optimal solution to $Steep(\mcp,\bm x',\bm c)$, and let $\bm z:= \frac{1}{\|\bm{\hat z}\|_1} \bm{\hat z}$.
Note that $\bm z$ is a circuit of $\mcp$, being a rescaling of $\bm{\hat z} \in \C(A,B)$.
Let $\alpha\bm z$  be a steepest-descent augmentation at $\bm x'$. 
Similarly, let $\bm{\hat z^*}$ be an optimal solution to $Great(\mcp,\bm x',\bm c)$, let $\bm z^*:= \frac{1}{\|\bm{\hat z^*}\|_1} \bm{\hat z^*}$
and let $\alpha^*\bm z^*$ be a greatest-improvement augmentation at $\bm x'$.  
Then we have that
$-(\bm c^\T\bm{\hat z})/\|\bm{\hat z}\|_1 \geq -(\bm c^\T\bm{\hat z^*})/\|\bm{\hat z^*}\|_1$, and so $-\bm c^\T\bm z\geq -\bm c^\T\bm z^*$.  Therefore
$$
-\alpha\bm c^\T\bm z \geq -\alpha\bm c^\T\bm z^*=\left(\frac{\alpha}{\alpha^*}\right)(-\alpha^*\bm c^\T\bm z^*).
$$
Since the augmentation $\alpha\bm z$ is maximal, we have that at the point $\bm x'+\alpha\bm z$, there exists some facet of our feasible region which contains $\bm x'+\alpha\bm z$ but not $\bm x'$.  Then $\omega_1\leq \|(\bm x'+\alpha\bm z) - \bm x'\|_1 = \alpha\|\bm z\|_1$.  Since $\|\bm z\|_1=1$, it follows that $\alpha\geq\omega_1$.
Since $\bm x'+\alpha^*\bm z^*$ is feasible, we have that $\|(\bm x'+\alpha^*\bm z^*)-\bm x'\|_1$ is at most the maximum 1-norm distance from $\bm x'$ to any other feasible point.  As above, it follows that $\alpha^*$ is at most the maximum 1-norm distance from $\bm x'$ to any other feasible point.   
Since the function $f(\bm y)=\|\bm y-\bm x'\|_1$ is convex, 
this maximum is achieved at an extreme point.  It follows that $\alpha^*\leq M_1$.

\smallskip
Given these bounds on $\alpha$ and $\alpha^*$, it follows that
\begin{equation}
-\alpha\bm c^\T\bm z \geq \left(\frac{\omega_1}{M_1}\right)(-\alpha^*\bm c^\T\bm z^*) \tag{$\star$}
\label{eq_approx}
\end{equation}
Now let $\bar{\bm x}=\bm x'+\alpha\bm z$.  By~\autoref{cor:snap_to_vert}, an extreme point solution $\hat{\bm x}$ can be found from $\bar{\bm x}$ in at most $n-1$ additional augmentations (e.g., using again steepest-descent augmentations, but on a sequence of face-restricted LPs) with $\bm c^\T\hat{\bm x}\leq\bm c^\T\bar{\bm x}$.  Then we have that $\hat{\bm x}-\bar{\bm x}$ is an $\big(\frac{\omega_1}{M_1} \big)$-approximate greatest-improvement augmentation at $\bm x'$, and since $\hat{\bm x}$ is also an extreme point, we can continue to apply this procedure.  Since it takes at most $n$ steepest-descent augmentations to find such an $\big(\frac{\omega_1}{M_1} \big)$-approximate greatest-improvement augmentation, it follows from Lemma~\ref{lem:approximation} that from an initial solution $\bm x_0$, we can reach $\bm x_{\min}$ in $\mathcal{O} \left(n^2\frac{M_1}{\omega_1} \log\big(\delta \bm c^\T(\bm x_0-\bm x_{\min})\big)\right)$ steepest-descent augmentations. 
 \end{proof}

We note that the inequality~\eqref{eq_approx} yields the following corollary.

\begin{cor}
\label{cor_approx}
Let $\bm x$ be an extreme point of a bounded LP. A steepest-descent augmentation is a $\left(\frac{M_1}{\omega_1}\right)$-approximate greatest-improvement augmentation.
\end{cor}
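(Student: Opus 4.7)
The plan is to essentially extract the approximation inequality $(\star)$ that already appears inside the proof of Theorem~\ref{thm:steep_aprrox_deep}, since that inequality is precisely the claim of the corollary. So the proof would consist of repeating the short chain of observations used to derive $(\star)$, rather than anything genuinely new.

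First, I would fix an extreme point $\bm x$ of the bounded LP and normalize both circuits so that they have unit $1$-norm: let $\bm z := \bm{\hat z}/\|\bm{\hat z}\|_1$ for the steepest-descent optimizer $\bm{\hat z}$, and likewise $\bm z^* := \bm{\hat z^*}/\|\bm{\hat z^*}\|_1$ for the greatest-improvement optimizer $\bm{\hat z^*}$; write the associated maximal step sizes as $\alpha$ and $\alpha^*$. Since rescaling leaves the ratio $\bm c^\T \bm g/\|\bm g\|_1$ unchanged, the definition of the steepest-descent pivot rule yields $-\bm c^\T \bm z \geq -\bm c^\T \bm z^*$ directly.

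Next, I would bound the step sizes in terms of $\omega_1$ and $M_1$. Because the augmentation $\alpha \bm z$ at the extreme point $\bm x$ is maximal, the point $\bm x + \alpha \bm z$ lies on some facet that does not contain $\bm x$, so $\omega_1 \leq \|\alpha \bm z\|_1 = \alpha$. For the upper bound on $\alpha^*$, feasibility of $\bm x + \alpha^* \bm z^*$ gives $\alpha^* = \|\alpha^* \bm z^*\|_1 \leq \max_{\bm y \in \mcp} \|\bm y - \bm x\|_1$, and since $\bm y \mapsto \|\bm y - \bm x\|_1$ is convex and $\mcp$ is bounded, the maximum is attained at a vertex, hence is at most $M_1$.

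Combining these three ingredients,
\begin{equation*}
-\alpha\, \bm c^\T \bm z \;\geq\; -\alpha\, \bm c^\T \bm z^* \;=\; \frac{\alpha}{\alpha^*}\bigl(-\alpha^* \bm c^\T \bm z^*\bigr) \;\geq\; \frac{\omega_1}{M_1}\bigl(-\alpha^* \bm c^\T \bm z^*\bigr),
\end{equation*}
which is exactly the definition of a $(M_1/\omega_1)$-approximate greatest-improvement augmentation. There is no real obstacle here: all the work was already done inside Theorem~\ref{thm:steep_aprrox_deep}, so the only concern is simply to present the argument cleanly as a standalone statement rather than an auxiliary step buried in the proof of the main convergence bound.
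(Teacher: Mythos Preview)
Your proposal is correct and is exactly the paper's approach: the paper simply notes that inequality~\eqref{eq_approx} from the proof of Theorem~\ref{thm:steep_aprrox_deep} yields the corollary, and you have faithfully reproduced the short derivation of~\eqref{eq_approx}. There is nothing to add.
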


The example given by~\autoref{fig:tight_approx_for_steep} shows that the approximation factor $\frac{M_1}{\omega_1}$  can be tight.
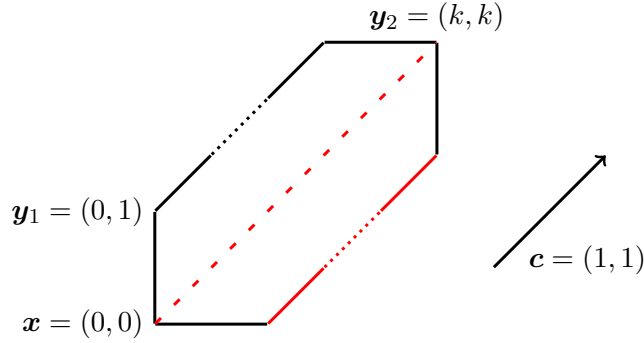
\begin{figure}
    \centering
    \begin{tikzpicture}[scale=1.5]
	    \tikzset{graph node/.style={shape=circle,draw=black,inner sep=0pt, minimum size=4pt}}
	    \tikzset{vert/.style={draw=none,fill=none,inner sep=0pt,minimum size=0pt}}
	    \tikzset{redge/.style={red, line width=.4mm}}		
	    \tikzset{dashredge/.style={red, line width=.4mm, dashed,dash pattern=on 3pt off 7pt}}
	    \tikzset{dotedge/.style={black, line width=.4mm, dotted}}
	    \tikzset{dotredge/.style={red, line width=.4mm, dotted}}
	    \tikzset{edge/.style={line width = .4mm}}
	    \tikzset{cedge/.style={->, line width=.4mm}}
	
	    \node[vert, label=left:{$\bm x = (0,0)$}] (x) at (0,0){};
	    \node[vert, label=left:{$\bm y_1 = (0,1)$}] (y1) at (0,1){};
	    \node[vert, label={$\bm y_{2} = (k,k)$}] (y2) at (2.5,2.5){};
	    \node[vert] (v1) at (2.5,1.5){};
	    \node[vert] (v2) at (1,0){};
	    \node[vert] (v3) at (1.5,2.5){};
	    
	    \node[vert] (c1) at (3,.5){};
	    \node[vert] (c2) at (4,1.5){};
	    
	    \node[vert] (i1) at (.5,1.5){};
	    \node[vert] (i2) at (1,2){};
	    
	    \node[vert] (j1) at (1.5,.5){};
	    \node[vert] (j2) at (2,1){};
	    
	    \draw[edge] (x)--(y1);
	    \draw[edge] (y1)--(i1);
	    \draw[dotedge] (i1)--(i2);
	    \draw[edge] (i2)--(v3);
	    \draw[edge] (v3)--(y2);
	    \draw[edge] (y2)--(v1);
	    \draw[redge] (j2)--(v1);
	    \draw[dotredge] (j1)--(j2);
	    \draw[redge] (j1)--(v2);
	    \draw[edge] (x)--(v2);
	    \draw[dashredge] (x)--(y2);
	    
	    \draw[cedge]  (c1)--(c2) node[pos=.3, below,xshift=8mm]{$\bm c=(1,1)$};
	\end{tikzpicture}
	\caption{This gives a family of examples (parameterized by $k$) where $i)$ moving along the edges incident at a vertex yields an arbitrarily bad approximation of moving along the greatest-improvement circuit, and $ii)$ a steepest-descent augmentation at $\bm x$ is a (tight) $\frac{M_1}{\omega_1}$-approximate greatest-improvement augmentation.  This polygon has vertices $\bm x=(0,0), \bm y_1=(0,1), \bm y_2=(k,k), (k,k-1), (k-1,k)$ and $(1,0)$.  One can check that at $\bm x$, $\bm y_1$ is both a steepest-descent augmentation as well as a steepest edge, while $\bm y_2$ is a greatest-improvement augmentation. We have that $\bm c^\T\bm y_1=\frac{1}{2k}\bm c^\T\bm y_2=\frac{\omega_1}{M_1}\bm c^\T\bm y_2$.}
	\label{fig:tight_approx_for_steep}
\end{figure}

\section{Implications for 0/1 Polytopes}\label{sec:01pol}

In this section, we explore the implications that~\autoref{thm:steep_aprrox_deep} has in the case of
0/1 polytopes, eventually proving Theorem~\ref{thm:approx_edge} and Theorem~\ref{thm:main1}. We start with the following lemma:

\begin{lem}\label{lem:0/1_steep_is_edge}
Consider a problem in the general form~\emph{(\ref{lp})} whose feasible region $\mcp$ is a 0/1 polytope, and let $\bm x$ be a non-optimal vertex of $\mcp$. 
Then, the optimal solution to $Steep(\mcp,\bm x,\bm c)$ corresponds to an edge-direction at $\bm x$, and can be computed in polynomial time.
\end{lem}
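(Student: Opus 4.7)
The plan is to establish two things: (i) the optimum of $Steep(\mcp,\bm x,\bm c)$ corresponds to an edge-direction at $\bm x$, and (ii) such an edge-direction can be computed by solving a single polynomial-size LP. The key geometric observation driving both parts is that, since $\mcp\subseteq[0,1]^n$ for a 0/1 polytope, the sign pattern of any feasible direction $\bm e$ at a vertex $\bm x$ is rigidly determined by $\bm x$: $\bm e(k)\geq 0$ whenever $\bm x(k)=0$ and $\bm e(k)\leq 0$ whenever $\bm x(k)=1$ (otherwise $\bm x+\varepsilon\bm e$ would leave $[0,1]^n$). Setting $\bm u:=\bm 1-2\bm x\in\{-1,+1\}^n$, this yields the identity $\|\bm e\|_1 = \bm u^\T \bm e$ for every $\bm e$ in the feasible cone at $\bm x$; that is, the $1$-norm becomes \emph{linear} on the feasible cone. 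This linearization is what collapses the fractional program $Steep$ into a plain LP.

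For part (i), I would use the fact that the feasible cone at a vertex $\bm x$ is pointed and is generated by its extreme rays, i.e.\ the edge-directions $\bm e^1,\dots,\bm e^r$ at $\bm x$. Any feasible circuit $\bm g$ for $Steep(\mcp,\bm x,\bm c)$ lies in this cone and decomposes as $\bm g=\sum_{i}\mu_i\bm e^i$ with $\mu_i\geq 0$. Using the identity above, $\|\bm g\|_1 = \bm u^\T \bm g = \sum_i\mu_i\,\bm u^\T\bm e^i = \sum_i\mu_i\,\|\bm e^i\|_1$, so
\[
\frac{-\bm c^\T\bm g}{\|\bm g\|_1}
= \frac{\sum_{i}\mu_i(-\bm c^\T\bm e^i)}{\sum_{i}\mu_i\|\bm e^i\|_1}
\leq \max_{i}\frac{-\bm c^\T\bm e^i}{\|\bm e^i\|_1}.
\]
Since each edge-direction is itself a circuit (extreme rays of the feasible cone at a vertex are circuits, and in the 0/1 setting they already have entries in $\{-1,0,1\}$ and so are co-prime), this shows that the optimum of $Steep(\mcp,\bm x,\bm c)$ is attained by (a positive scaling of) an edge-direction at $\bm x$.

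For part (ii), I would directly find the steepest edge by solving
\[
\max\;-\bm c^\T \bm e \qquad \text{s.t.}\qquad A\bm e = \bm 0,\;\; B_{T(\bm x)}\bm e \leq \bm 0,\;\; \bm u^\T \bm e = 1,
\]
where $T(\bm x)$ is the polynomially computable set of indices of $B\bm x\leq \bm d$ that are tight at $\bm x$. The feasible region of this LP is a bounded base of the feasible cone at $\bm x$: it is bounded because $\bm u^\T\bm e>0$ for every nonzero $\bm e$ in the cone. Hence the optimum is attained at an extreme point of this base, which is a positive rescaling of an edge-direction at $\bm x$. Solving the LP in polynomial time and rescaling the optimal vertex to co-prime integer entries produces a circuit $\bm g\in\mc(A,B)$ that is an optimal solution to $Steep(\mcp,\bm x,\bm c)$.

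The main obstacle, and the novelty of the argument, is establishing the linearity of $\|\cdot\|_1$ on the feasible cone. This is exactly what the 0/1 structure buys us: in general polyhedra the sign pattern of a feasible direction is not determined by the base point, and indeed Figure~\ref{fig:tight_approx_for_steep} shows that steepest circuits can be arbitrarily steeper than any edge-direction in that setting, so the LP reformulation above would no longer work.
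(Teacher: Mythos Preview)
Your proof is correct and rests on the same geometric fact as the paper's: since $\mcp\subseteq[0,1]^n$, the feasible cone at a $0/1$ vertex lies in a single orthant, so the $1$-norm is linear there. The paper packages this differently. It sets up the auxiliary LP $\mathcal{Q}=\max\{-\bm c^\T\bm z:\|\bm z\|_1\le 1,\ \bm z\text{ in the feasible cone at }\bm x\}$, observes that of the $2^n$ cross-polytope facets only one is active on that orthant, and then uses a dimension count on the vertices of $\mcp_{\mathcal Q}$ to conclude that an optimal vertex lies on at least $n-1$ feasible-cone facets and hence is an edge-direction. You instead make the linearization explicit via $\bm u=\bm 1-2\bm x$ and the identity $\|\bm e\|_1=\bm u^\T\bm e$ on the cone; this lets you (a) prove the ``steepest is an edge'' part by a clean mediant inequality after conic decomposition into edge-directions, and (b) write down a single polynomial-size LP with the linear constraint $\bm u^\T\bm e=1$ rather than the full cross-polytope. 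The two arguments are equivalent in spirit, but yours is somewhat more transparent and avoids the detour through the exponential cross-polytope description; the paper's version, on the other hand, makes the connection to the LP $\mathcal{Q}$ explicit, which is reused later when invoking the Frank--Tardos result in the proof of Theorem~\ref{thm:simplex_0/1}.
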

\begin{proof}
Consider the optimal objective function value of $Steep(\mcp,\bm x,\bm c)$. It is not difficult to see that this value is bounded above by the optimal objective function value of the following optimization problem $\mathcal Q$:

\begin{align}
    &\max  -\bm c^\T\bm z \nonumber\\
    &\text{s.t.} \nonumber\\
    &\|\bm z\|_1\leq1\label{eq:orthoplex}\\
    &\bm x+\varepsilon\bm z\in \mcp & \text{for some }\varepsilon>0\label{eq:feasible_cone}
\end{align}

over all $\bm z\in\R^n$.  This is true since if $\bm g \in \mathcal C(A,B)$ is a feasible solution to $Steep(\mcp,\bm x,\bm c)$, then 
$\frac{\bm g}{\|\bm g\|_1}$ is a feasible solution of $\mathcal Q$ with the same objective function value.

\smallskip
Let $\mcp_{\mathcal Q}$ denote the feasible region of $\mathcal Q$.
Note that $\mcp_{\mathcal Q}$ is the feasible cone at $\bm x$ in $\mcp$---given by the constraint~(\ref{eq:feasible_cone})---intersected with an $n$-dimensional cross-polytope---given by the constraint~(\ref{eq:orthoplex}).  The constraint~(\ref{eq:orthoplex}) can be modeled using the linear constraints
\begin{align}
    \bm v^\T\bm z\leq 1 &\,\,\text{ for all }\bm v\in\set{1,-1}^n.
\end{align}

It follows that $\mcp_{\mathcal Q}$ is a polytope, and therefore $\mathcal Q$ is a feasible bounded LP.  
There exists an optimal vertex $\bm y$ of $\mcp_{\mathcal Q}$ which is determined by $n$ linearly independent constraints of 
$\mcp_{\mathcal Q}$.

\smallskip
Since $\bm x\in\set{0,1}^n$ and $\mcp$ is a 0/1 polytope, each entry of $\bm x$ is either equal to its upper bound or its lower bound.  Thus, the feasible cone at $\bm x$ lies within a single orthant of $\R^n$.  This implies that among all the linear constraints that model $\|\bm z\|_1\leq 1$, only one is facet defining. 
Therefore, $\bm y$ is contained in at least $n-1$ facets corresponding to inequalities that describe the feasible cone at $\bm x$. Since $\bm x$ is not optimal, $\bm y\neq \bm 0$.

\smallskip
As a consequence of this, we have that the optimal solution of $\mathcal Q$ corresponds to an edge-direction of $\mcp$ incident with $\bm x$. It follows that the optimal solution of $Steep(\mcp,\bm x,\bm c)$ is an edge-direction of $\mcp$ incident with $\bm x$, and it can be computed in polynomial time.
\end{proof}

We note that it was shown in~\cite{Borgwardt+Viss} that for a slightly different definition of steepest-descent, a steepest-descent circuit-augmentation can be computed in polynomial time for all LPs.  Despite similarities in the two versions of the steepest-descent pivot rule, the technique they employ cannot be straightforwardly applied to work for the definition of steepest-descent used here, and our computability 
result in~\autoref{lem:0/1_steep_is_edge} is not implied by theirs.

We can now prove Theorem~\ref{thm:approx_edge}, which we restate  for convenience.

\newtheorem*{thm01}{Theorem~\ref{thm:approx_edge}}

\begin{thm01}
Let $\bm{x}$ be an extreme point of a 0/1-LP with $n$ variables. An augmentation along a steepest edge yields an $n$-approximation of an augmentation along a greatest-improvement circuit.
\end{thm01}

\begin{proof}
We combine Corollary~\ref{cor_approx} and Lemma~\ref{lem:0/1_steep_is_edge}. Since $\mcp$ is a 0/1 polytope, we immediately have that $M_1\leq n$.  We now show that $\omega_1\geq 1$.  Let $\bm v$ be any extreme point of $\mcp$ and let $F$ be any facet of $\mcp$ which does not contain $\bm v$.  By reflecting and translating $\mcp$, we may assume without loss of generality that $\bm v=\bm 0$ (Note that these operations do not change the 1-norm distance between any pair of points in $\mcp$).  It therefore suffices to show that for any facet $F$ not containing $\bm 0$, $\|\bm y\|_1\geq 1$ for all $\bm y\in F$.  Since all points in $F$ have non-negative coordinates, the minimum value of $\|\bm y\|_1$ over all $\bm y\in F$ is equal to the optimal solution to the following LP: 
\begin{equation*}
	\begin{aligned}
	    &\min \bm 1^\T\bm y\\
	    &\text{s.t.}\\
	    &\bm y\in F.
	\end{aligned}
\end{equation*}
There exists an optimal solution $\bm y^*$ to this LP which is an extreme point solution. Since $\bm y^*$ is an extreme point of $F$, it is also an extreme point of $\mcp$, and since $\bm y^*\in F$, it is an extreme point not equal to $\bm 0$.  Therefore, $\bm y^*$ has at least one coordinate equal to 1, and so $\|\bm y^*\|_1\geq 1$, as desired.  Therefore, $\frac{\omega_1}{M_1}\geq \frac{1}{n}$. 
\end{proof}

We will rely next on the following result of Frank and Tardos~\cite{andras+eva}.

\begin{lem}[\cite{andras+eva}]
\label{lem:replace_obj} Let $\bm w \in \mathbb R^n$ be a rational vector, and $\alpha$ be a positive integer.
Define $N:=(n+1)!2^{n\alpha}+1$. Then one can compute an integral vector $\bm w' \in \mathbb Z^n$ satisfying:
\begin{itemize} 
    \item[(a)] $\|\bm w'\|_{\infty}\leq2^{4n^3}N^{n(n+2)}$;
    \item[(b)] Consider any rational LP of the form $\max\set{\,\bm w^\T\bm x\ :\, A' \bm x \leq \bm b', \, \bm x\in\R^n\,}$, where 
    the encoding length of any entry of $A'$ is at most $\alpha$. Then,
$\bm x \in \mathbb R^n$ is an optimal solution to that LP  if and only if it is an optimal solution to $\max\set{\,\bm w'^\T\bm x\ :\, A' \bm x \leq \bm b', \, \bm x\in\R^n\,}$.  
\end{itemize}

 \end{lem}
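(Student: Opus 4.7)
The plan is to prove this via simultaneous Diophantine approximation, following the LLL-based approach of Frank and Tardos. The guiding observation is that the set of optimal solutions of a linear program $\max\{\bm w^\T \bm x : A' \bm x \leq \bm b'\}$ depends on $\bm w$ only through the sign pattern $\mathrm{sign}(\bm w^\T \bm g)$ as $\bm g$ ranges over the edge-directions of the polyhedron $\{\bm x : A'\bm x \leq \bm b'\}$: a vertex $\bm x$ is optimal if and only if $\bm w^\T \bm g \leq 0$ for every edge-direction $\bm g$ in its feasible cone, and the optimal face is further cut out by the edge-directions with $\bm w^\T \bm g = 0$. Hence it suffices to compute an integer $\bm w'$ that induces the same sign vector (including the distinction between zero and nonzero) on every edge-direction of every polyhedron of encoding length at most $\alpha$.

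First, I would bound the edge-directions uniformly in $\alpha$. By Cramer's rule applied to the $n\times n$ nonsingular subsystems of $A'$, every edge-direction $\bm g$ can be taken integer with $\|\bm g\|_\infty \leq n!\, 2^{n\alpha}$, so $\|\bm g\|_1 \leq N-1$ for the choice $N=(n+1)!\, 2^{n\alpha}+1$ used in the lemma statement. Second, I would invoke the LLL-based simultaneous Diophantine approximation algorithm on $\bm w$ with accuracy $1/N$: this yields an integer vector $\bm w' \in \mathbb Z^n$ and a positive integer scalar $q$ with $|q\,\bm w(i) - \bm w'(i)| \leq 1/N$ for every $i$, and with $q$ bounded polynomially in $N$ and exponentially in $n$. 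Combining the triangle inequality $\|\bm w'\|_\infty \leq q\|\bm w\|_\infty + 1/N$ with an absorption of the denominator of $\bm w$ into $q$ (via rescaling) yields the claimed estimate $\|\bm w'\|_\infty \leq 2^{4n^3} N^{n(n+2)}$.

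Third, I would verify sign preservation on edge-directions. For any such $\bm g$ we have $|q\,\bm w^\T \bm g - {\bm w'}^\T \bm g| \leq \|\bm g\|_1/N < 1$; when $\bm w^\T \bm g \neq 0$, the rational $q\,\bm w^\T \bm g$ has absolute value at least $1$ after suitable rescaling of $q$ (using the denominator bound coming from $\alpha$), and hence ${\bm w'}^\T \bm g$ shares its sign. The delicate case is $\bm w^\T \bm g = 0$, which I would handle by an iterative, lexicographic construction: after fixing the strict signs as above, one restricts to the subspace spanned by the ``tight'' edge-directions and repeats the approximation on the resulting lower-dimensional problem, then combines the outputs with polynomially large multipliers so that the final $\bm w'$ preserves the entire sign vector, and in particular the exact set of optimal solutions. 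The main obstacle I anticipate is tracking the accumulation of multiplicative constants through the at most $n$ iterated LLL calls and through the Cramer-type bounds simultaneously in order to arrive at the precise exponents $4n^3$ and $n(n+2)$, while ensuring the final output remains a \emph{single} integer vector rather than a sequence of nested approximations.
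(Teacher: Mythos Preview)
The paper does not prove this lemma at all: it is quoted verbatim as a result of Frank and Tardos and used as a black box in the proof of Theorem~\ref{thm:simplex_0/1}. So there is no ``paper's own proof'' to compare against; your proposal is an attempt to reconstruct the original Frank--Tardos argument rather than anything the present paper supplies.

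As a sketch of that original argument your outline is broadly on target: reduce preservation of optimal sets to preservation of the sign pattern $\mathrm{sign}(\bm w^\T \bm b)$ over all integer $\bm b$ with $\|\bm b\|_1 < N$ (Cramer's rule giving the bound $N$ on edge-directions), then use LLL-based simultaneous Diophantine approximation to produce an integer $\bm w'$ close to a scalar multiple of $\bm w$. Two points deserve tightening. First, your reduction phrases optimality purely in terms of edge-directions at vertices, but the lemma asserts equivalence of optimality for \emph{every} $\bm x$, including interior points of faces; the clean way through is the stronger Frank--Tardos statement that $\mathrm{sign}(\bm w^\T \bm b)=\mathrm{sign}({\bm w'}^\T \bm b)$ for all short integer $\bm b$, which immediately gives equality of optimal faces. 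Second, your treatment of the case $\bm w^\T \bm g = 0$ (``restrict to the subspace spanned by the tight edge-directions and repeat'') is not quite the Frank--Tardos recursion: they project $\bm w$ itself, not the constraint directions, writing $\bm w$ as a sum of at most $n$ successively approximated pieces with geometrically separated multipliers so that the strict signs fixed at earlier stages cannot be overturned by later corrections, while the zero signs are forced exactly. Getting the exponents $4n^3$ and $n(n+2)$ requires tracking those multipliers through all $n$ rounds, which is indeed the bookkeeping burden you flag.
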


We are now ready to prove the following theorem.

\begin{thm}
\label{thm:simplex_0/1}
Given a problem in the general form~\emph{(\ref{lp})} whose feasible region $\mcp$ is a 0/1 polytope, a circuit-augmentation algorithm with a steepest-descent circuit-pivot rule
 reaches an optimal solution performing
 a strongly-polynomial number of augmentations. \\
 Furthermore, if the initial solution is a vertex, 
 the algorithm follows a path on the 1-skeleton of $\mcp$.
\end{thm}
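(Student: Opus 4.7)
The strategy is to combine four of the preceding ingredients: Lemma~\ref{lem:0/1_steep_is_edge} (steepest-descent coincides with a steepest edge at any vertex of a $0/1$-polytope), Theorem~\ref{thm:approx_edge} (a steepest edge is an $n$-approximate greatest-improvement circuit), Lemma~\ref{lem:approximation} (approximate greatest-improvement rules admit a logarithmic iteration bound), and the Frank-Tardos rounding of Lemma~\ref{lem:replace_obj}.

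For the \emph{structural} part, suppose the initial point $\bm x_0$ is a vertex. Applying Lemma~\ref{lem:0/1_steep_is_edge} inductively, at every non-optimal vertex $\bm x$ reached by the algorithm the optimal solution to $Steep(\mcp,\bm x,\bm c)$ is an edge-direction of $\mcp$ incident to $\bm x$. Since $\mcp$ is a $0/1$ polytope, each edge is a $1$-dimensional face whose two endpoints are vertices in $\{0,1\}^n$; hence the maximal augmentation from $\bm x$ along such an edge-direction terminates exactly at the adjacent vertex along that edge. Inductively, all iterates are vertices of $\mcp$ and the trajectory is a monotone path on the $1$-skeleton.

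For the \emph{quantitative} bound, I would first apply Lemma~\ref{lem:replace_obj} to replace the original cost $\bm c$ by an equivalent integer vector $\bm c'$ with the same set of optimal solutions and with $\log \|\bm c'\|_{\infty}$ polynomial in $n$ and the encoding length of the constraint matrix. Running the steepest-descent circuit-augmentation algorithm with respect to $\bm c'$ terminates at an optimum of the original LP. By Lemma~\ref{lem:0/1_steep_is_edge} combined with Theorem~\ref{thm:approx_edge}, each steepest-descent augmentation performed at a vertex of this $0/1$-polytope is an $n$-approximate greatest-improvement augmentation. Plugging $\gamma = n$ into Lemma~\ref{lem:approximation} gives a bound of $\mathcal{O}\!\left(n^2 \log\!\big(\delta\,\bm c'^{\T}(\bm x_0 - \bm x_{\min})\big)\right)$ augmentations. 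Since $\mcp \subseteq [0,1]^n$, we have $\|\bm x_0 - \bm x_{\min}\|_1 \leq n$, so $\bm c'^{\T}(\bm x_0 - \bm x_{\min}) \leq n\|\bm c'\|_{\infty}$, which is polynomially bounded; combining yields a count polynomial in $n$ and the constraint-encoding, giving the strongly-polynomial claim. For a non-vertex initial point the same conclusion follows by instead invoking Theorem~\ref{thm:steep_aprrox_deep} and using $M_1 \leq n$ and $\omega_1 \geq 1$ for $0/1$-polytopes (the latter being established inside the proof of Theorem~\ref{thm:approx_edge}).

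The main delicate point is verifying that the maximal step along a steepest direction at a vertex lands exactly at an adjacent vertex, so that iterates remain on the $1$-skeleton: this uses that Lemma~\ref{lem:0/1_steep_is_edge} delivers a genuine \emph{edge}-direction (not merely an extreme ray of the feasible cone at $\bm x$), together with the general fact that edges of a $0/1$-polytope join two $0/1$-vertices. A secondary subtlety is the interaction between Frank-Tardos preprocessing and the pivot rule: optimality preservation is enough to guarantee termination at a correct optimum, and Lemma~\ref{lem:0/1_steep_is_edge} applies equally well to the rounded cost $\bm c'$, so running the algorithm with $\bm c'$ still yields a vertex-to-vertex trajectory along edges with the claimed iteration bound.
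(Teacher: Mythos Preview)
Your overall architecture matches the paper's, but there is a genuine gap in the interaction with Frank--Tardos, and a second issue with what ``strongly-polynomial'' means.

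\textbf{The main gap.} The theorem asserts that steepest-descent \emph{with the original cost $\bm c$} terminates in strongly-polynomially many steps. Your argument instead bounds the number of steps when the algorithm is run with the rounded cost $\bm c'$. Saying that ``optimality preservation is enough to guarantee termination at a correct optimum'' misses the point: you have not argued that the \emph{trajectory} of the $\bm c$-run is the same as (or at least no longer than) that of the $\bm c'$-run. The paper closes this gap by invoking part~(b) of Lemma~\ref{lem:replace_obj} not on the original LP, but on the auxiliary LP $\max\{-\bm c^\T\bm z:\bm z\in\mcp_{\mathcal Q}\}$ from the proof of Lemma~\ref{lem:0/1_steep_is_edge}, which characterizes the steepest direction at each vertex. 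Since the constraint coefficients of $\mcp_{\mathcal Q}$ are also bounded by $n^{n/2}/2^n$ (see the next paragraph), Frank--Tardos guarantees that $\bm c$ and $\bm c'$ select the \emph{same} steepest direction at every vertex, so the two runs perform identical pivots. Without this step your proof establishes only that \emph{some} cost vector makes steepest-descent fast, not that the given one does.

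\textbf{The ``strongly-polynomial'' issue.} You conclude with a bound ``polynomial in $n$ and the constraint-encoding,'' but that is a weakly-polynomial bound, not a strongly-polynomial one. Both $\log\delta$ and the Frank--Tardos parameter $\alpha$ depend on the bit-size of $A,B$ as given. The paper removes this dependence by invoking Ziegler's result that every $0/1$ polytope admits a description with coefficients of absolute value at most $n^{n/2}/2^n$; since the steepest-descent trajectory at vertices is determined by the edges of $\mcp$ (Lemma~\ref{lem:0/1_steep_is_edge}) and hence is intrinsic to the polytope, one may assume this bounded description for the analysis. This makes $\log\delta$ and $\alpha$ polynomial in $n$ alone, and is also what allows Frank--Tardos~(b) to apply uniformly to the auxiliary LPs above.
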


\begin{proof}
Let us call $(LP_1)$ the given LP problem of the form~(\ref{lp}) whose feasible region is $\mcp$. 
Since $\mcp$ is a $0/1$ polytope, for the sake of the analysis we can assume that the maximum absolute value of any element
in $A$ and $B$ is $\leq\frac{n^{n/2}}{2^n}$~\cite{Ziegler97}.
Apply Lemma~\ref{lem:replace_obj} to the LP obtained from $(LP1)$ after changing the objective function
to $\max \bm w^\T\bm x$, with $\bm w := - \bm c$. Set $\bm c' := - \bm w'$. 
Finally, let $(LP2):= \min\set{\,\bm c'^\T\bm x\ :\, A \bm x = \bm b, \, B \bm x \leq \bm d, \, \bm x\in\R^n\,}$.

\smallskip
Let $\bm x_0$ and $\bm x_{\min}$ be respectively the initial solution and the optimal solution.
By performing at most $n$ additional augmentations, we can assume $\bm x_0$  is an extreme point.

First, we will show that~\autoref{thm:simplex_0/1} holds for $(LP2)$.  Then, we will show that a circuit-augmentation algorithm traverses the same edge-walk when solving $(LP2)$ and $(LP1)$, if one uses the steepest-descent circuit-pivot rule. This will prove the statement.

\smallskip
Recall that the steepest-descent circuit-pivot rule selects at each step an improving circuit $\bm g$
that minimizes $\frac{\bm c^\T\bm g}{\|\bm g\|_{1}}$.  
Since the feasible region of $(LP2)$ is a $0/1$ polytope, we can apply Lemma~\ref{lem:0/1_steep_is_edge}. Therefore, 
each augmentation corresponds to moving from an extreme point to an adjacent extreme point. Furthermore, the total number of augmentations 
can be bounded via Theorem~\ref{thm:approx_edge} and Lemma~\ref{lem:approximation} by

$$
\mathcal{O}\left(n^2\,\log\big(\delta \,\bm c'^\T (\bm x_0-\bm x_{\min})\big)\right).
$$

Since the maximum absolute value of any entry of $A$ and $B$ is at most $\frac{n^{n/2}}{2^n}$, we have that the maximum absolute value of the determinant
of any $n \times n$ submatrix is at most $\big(\frac{n^{n/2}}{2^n} \big)^n n!$.
By the definition of $\delta$, we have that $\delta\leq \big( \big(\frac{n^{n/2}}{2^n} \big)^n n!\big)$, and so $\log(\delta)$ is polynomial in $n$.

\smallskip
Finally, we address the term $\log\big(\bm c'^\T(\bm x_0 - \bm x_{\min})\big)$.  Since $\bm x_0$ and $\bm x_{\min}$ are both in $\set{0,1}^n$, we have that $\log\big(\bm c'^\T(\bm x_0 - \bm x_{\min})\big)\leq\log(\|\bm c'\|_1)\leq \log(n\|\bm c'\|_{\infty})$, which is polynomial in $n$ due to Lemma~\ref{lem:replace_obj}(a).  Therefore, the number of augmentations required to solve $(LP2)$ is 
strongly-polynomial in the input size.

\smallskip
To finish our proof, it remains to show that when the circuit-augmentation algorithm is applied to $(LP1)$, it performs the same edge-walk as it does when it is applied to $(LP2)$. To see this, we will rely on the polyhedral characterization of the problem $Steep(\mcp,\bm x,\bm c)$, used in the proof of Lemma~\ref{lem:0/1_steep_is_edge}. As explained there, the edge-direction $\bm g$ selected by our algorithm applied to $(LP2)$ is an optimal solution to the LP $\max \{-\bm{c'}^\T \bm{x}: \; \bm{x} \in \mathcal P_{\mathcal Q}\}$, which describes $Steep(\mcp,\bm x,\bm c')$. Note that the maximum absolute value of a matrix-coefficient of this LP is also at most $\frac{n^{n/2}}{2^n}$. Therefore, due to Lemma~\ref{lem:replace_obj}(b), $\bm g$ is an optimal solution to $\max \{-\bm{c'}^\T \bm{x}: \; \bm{x} \in \mathcal P_{\mathcal Q}\}$ (i.e., $Steep(\mcp,\bm x,\bm c')$) if and only if it is an optimal solution to $\max \{-\bm{c}^\T \bm{x}: \; \bm{x} \in \mathcal P_{\mathcal Q}\}$ (i.e., $Steep(\mcp,\bm x,\bm c)$). Therefore, the circuit-augmentation algorithm implemented according to the steepest-descent circuit-pivot rule, performs the exact same pivots for the objective functions $\bm c'$ and $\bm c$. 
\end{proof}

 It can be readily seen that Theorem~\ref{thm:simplex_0/1} implies Part (i) of Theorem~\ref{thm:main1}. 
 This gives a strongly-polynomial bound on the number of \emph{distinct} basic feasible solutions visited with a steepest-descent pivoting rule. 
 In the context of the Simplex method, moving to a neighboring vertex along a steepest edge might require several \emph{degenerate} basis exchanges. Having a `pure' pivoting rule that implies a strongly-polynomial bound on the total number of basis exchanges remains an open question, but if the polytope is non-degenerate, then the two concepts coincide, hence we obtain Part (ii) of Theorem~\ref{thm:main1}.
 
 We conclude our paper with the following remark. As already mentioned,
in the context of pivot rules for the Simplex method, the
name \emph{steepest-descent} often refers to normalizations according to the $2$-norm of a vector, rather
than the $1$-norm. Since for any vector $\bm g \in \mathbb R^n$,   we
have  $\sqrt(n) ||\bm g||_2  \geq  ||\bm g||_1 \geq ||\bm g||_2$, it is not difficult to note that the result of Theorem~\ref{thm:simplex_0/1} still holds if we
normalize according to the $2$-norm.

\paragraph{Acknowledgements.} J.A. De Loera was partially supported by NSF grant DMS-1818969 and an 
NSF TRIPODS award (NSF grant no. CCF-1934568). S. Kafer and L. Sanit\`a were 
supported by the NSERC Discovery Grant Program and an Early Researcher Award by the Province of Ontario. 
We are grateful to the two anonymous referees who suggested several excellent recommendations and corrections. 
We are also grateful to Jon Lee, Thomas McCormick, and Francisco Barahona who provided useful references.

\bibliographystyle{plain}
\bibliography{biblioAugmentation.bib}

\begin{thebibliography}{10}

\bibitem{adler+papadimitriou+rubinstein}
I.~Adler, C.~Papadimitriou, and A.~Rubinstein.
\newblock On simplex pivoting rules and complexity theory.
\newblock In {\em Integer programming and combinatorial optimization}, volume
  8494 of {\em Lecture Notes in Comput. Sci.}, pages 13--24. Springer, Cham,
  2014.

\bibitem{Ahuja+Magnanti+Orlin}
R.K. Ahuja, T.L. Magnanti, and J.B. Orlin.
\newblock {\em Network flows}.
\newblock Prentice Hall Inc., Englewood Cliffs, NJ, 1993.

\bibitem{amentaziegler}
N.~Amenta and G.~M. Ziegler.
\newblock Deformed products and maximal shadows of polytopes.
\newblock In {\em Advances in discrete and computational geometry ({S}outh
  {H}adley, {MA}, 1996)}, volume 223 of {\em Contemp. Math.}, pages 57--90.
  Amer. Math. Soc., Providence, RI, 1999.

\bibitem{AvisFriedmann2017}
D.~Avis and O.~Friedmann.
\newblock An exponential lower bound for {C}unningham's rule.
\newblock {\em Mathematical Programming}, 161(1):271--305, Jan 2017.

\bibitem{BT89}
F.~Barahona and E.~Tardos.
\newblock Note on {W}eintraub's minimum-cost circulation algorithm.
\newblock {\em SIAM Journal on Computing}, 18(3):579--583, 1989.

\bibitem{Behrend13}
R.~E. Behrend.
\newblock Fractional perfect b-matching polytopes. i: General theory.
\newblock {\em Linear Algebra and its Applications}, 439, 01 2013.

\bibitem{blandthesis}
R.G. Bland.
\newblock {\em Complementary orthogonal subspaces of $n$-dimensional Euclidean
  space and orientability of matroids}.
\newblock 1974.
\newblock Thesis (Ph.D.)--Cornell University.

\bibitem{blandflorida}
R.G. Bland.
\newblock On the generality of network flow theory.
\newblock Presented at the ORSA/TIMS Joint National Meeting, Miami, Florida,
  1976.

\bibitem{blandedmonds}
R.G. Bland and J.~Edmonds.
\newblock Fast primal algorithms for totally unimodular linear programming.
\newblock Presented at XI-th International Symposium on Mathematical
  Programming, Bonn, West Germany, 1982.

\bibitem{manycircuitdiameters}
S.~Borgwardt, J.~A.~De~Loera, and E.~Finhold.
\newblock Edges vs circuits: a hierarchy of diameters in polyhedra.
\newblock {\em Advances in Geometry}, 16, 09 2014.

\bibitem{doi:10.1137/140976868}
S.~Borgwardt, E.~Finhold, and R.~Hemmecke.
\newblock On the circuit diameter of dual transportation polyhedra.
\newblock {\em SIAM Journal on Discrete Mathematics}, 29(1):113--121, 2015.

\bibitem{Borgwardt+Viss}
S.~{Borgwardt} and C.~{Viss}.
\newblock {A Polyhedral Model for Enumeration and Optimization over the Set of
  Circuits}.
\newblock {\em arXiv e-prints}, page arXiv:1811.00444, Nov 2018.

\bibitem{DHKbook}
J.~A. De~Loera, R.~Hemmecke, and M.~K{\"o}ppe.
\newblock {\em Algebraic and geometric ideas in the theory of discrete
  optimization}, volume~14 of {\em MOS-SIAM Series on Optimization}.
\newblock Society for Industrial and Applied Mathematics (SIAM), Philadelphia,
  PA, 2013.

\bibitem{DL15}
J.~A. De~Loera, R.~Hemmecke, and J.~Lee.
\newblock On augmentation algorithms for linear and integer-linear programming:
  From {E}dmonds--{K}arp to {B}land and beyond.
\newblock {\em SIAM Journal on Optimization}, 25:2494--2511, 2015.

\bibitem{DST1995}
J.~A. De~Loera, B.~Sturmfels, and R.~R. Thomas.
\newblock Gr{\"o}bner bases and triangulations of the second hypersimplex.
\newblock {\em Combinatorica}, 15(3):409--424, Sep 1995.

\bibitem{disser+skutella}
Y.~Disser and M.~Skutella.
\newblock The simplex algorithm is {NP}-mighty.
\newblock In {\em Proceedings of the {T}wenty-{S}ixth {A}nnual {ACM}-{SIAM}
  {S}ymposium on {D}iscrete {A}lgorithms}, pages 858--872. SIAM, Philadelphia,
  PA, 2015.

\bibitem{fearnley+savani}
J.~Fearnley and R.~Savani.
\newblock The complexity of the simplex method.
\newblock In {\em S{TOC}'15---{P}roceedings of the 2015 {ACM} {S}ymposium on
  {T}heory of {C}omputing}, pages 201--208. ACM, New York, 2015.

\bibitem{finhold}
E.~Finhold.
\newblock {\em Circuit diameters and their application to transportation
  problems.}
\newblock PhD thesis, Technische Univ. M\"unchen, 2015.

\bibitem{andras+eva}
A.~Frank and E.~Tardos.
\newblock An application of simultaneous {D}iophantine approximation in
  combinatorial optimization.
\newblock {\em Combinatorica}, 7(1):49--65, 1987.

\bibitem{Friedmannetal}
O.~Friedmann, T.~D. Hansen, and U.~Zwick.
\newblock Subexponential lower bounds for randomized pivoting rules for the
  simplex algorithm.
\newblock In {\em Proceedings of the 43rd ACM Symposium on Theory of Computing,
  STOC'11, San Jose, CA, USA}, pages 283--292, 2011.

\bibitem{FT94}
A.~M. Frieze and S.~H. Teng.
\newblock On the complexity of computing the diameter of a polytope.
\newblock {\em Comput. Complexity}, 4:207--219, 1994.

\bibitem{GAUTHIERetal2014}
J.~B. Gauthier, J.~Desrosiers, and M.~L{\"u}bbecke.
\newblock Decomposition theorems for linear programs.
\newblock {\em Operations Research Letters}, 42(8):553 -- 557, 2014.

\bibitem{GOLDFARB+sit1979}
D.~Goldfarb and W.~Y. Sit.
\newblock Worst case behavior of the steepest edge simplex method.
\newblock {\em Discrete Applied Mathematics}, 1(4):277 -- 285, 1979.

\bibitem{Hansen+Zwick2015}
T.~Hansen and U.~Zwick.
\newblock An improved version of the random-facet pivoting rule for the simplex
  algorithm.
\newblock In {\em Proceedings of the Forty-Seventh Annual {ACM} on Symposium on
  Theory of Computing, {STOC} 2015, Portland, OR, USA, June 14-17, 2015}, pages
  209--218, 2015.

\bibitem{Hemmecke+Onn+Weismantel:oracle}
R.~Hemmecke, S.~Onn, and R.~Weismantel.
\newblock A polynomial oracle-time algorithm for convex integer minimization.
\newblock {\em Mathematical Programming}, 126:97--117, 2011.

\bibitem{JEROSLOW1973}
R.G. Jeroslow.
\newblock The simplex algorithm with the pivot rule of maximizing criterion
  improvement.
\newblock {\em Discrete Mathematics}, 4(4):367 -- 377, 1973.

\bibitem{doi:10.1137/17M1152115}
S.~Kafer, K.~Pashkovich, and L.~Sanit\`a.
\newblock On the circuit diameter of some combinatorial polytopes.
\newblock {\em SIAM Journal on Discrete Mathematics}, 33(1):1--25, 2019.

\bibitem{KarzanovMcCormick97}
A.~V. Karzanov and S.~T. McCormick.
\newblock Polynomial methods for separable convex optimization in unimodular
  linear spaces with applications.
\newblock {\em {SIAM} J. Comput.}, 26(4):1245--1275, 1997.

\bibitem{kitaharamizuno}
T.~Kitahara and S.~Mizuno.
\newblock A bound for the number of different basic solutions generated by the
  simplex method.
\newblock {\em Mathematical Programming}, 137(1-2, Ser. A):579--586, 2013.

\bibitem{kitaharamatsuimizuno}
T.~Kitahara and S.~Mizuno.
\newblock On the number of solutions generated by the simplex method for {LP}.
\newblock In {\em Optimization and Control Techniques and Applications}, pages
  75--90, Berlin, Heidelberg, 2014. Springer Berlin Heidelberg.

\bibitem{kleeminty}
V.~Klee and G.~J. Minty.
\newblock How good is the simplex algorithm?
\newblock In {\em Inequalities, {III} ({P}roc. {T}hird {S}ympos., {U}niv.
  {C}alifornia, {L}os {A}ngeles, {C}alif., 1969; dedicated to the memory of
  {T}heodore {S}. {M}otzkin)}, pages 159--175. Academic Press, New York, 1972.

\bibitem{Kortenkamp1997}
U.~H. Kortenkamp, J.~Richter-Gebert, A.~Sarangarajan, and G.~M. Ziegler.
\newblock Extremal properties of 0/1-polytopes.
\newblock {\em Discrete {\&} Computational Geometry}, 17(4):439--448, Jun 1997.

\bibitem{KunoSanoTsuruda18}
T.~Kuno, Y.~Sano, and T.~Tsuruda.
\newblock Computing {K}itahara-{M}izuno's bound on the number of basic feasible
  solutions generated with the simplex algorithm.
\newblock {\em Optimization Letters}, 12(5):933--943, 2018.

\bibitem{McCormick00}
S.~T. McCormick and A.~Shioura.
\newblock Minimum ratio canceling is oracle polynomial for linear programming,
  but not strongly polynomial, even for networks.
\newblock {\em Operations Research Letters}, 27(5):199--207, 2000.

\bibitem{muller+regensburger}
S.~M\"uller and G.~Regensburger.
\newblock Elementary vectors and conformal sums in polyhedral geometry and
  their relevance for metabolic pathway analysis.
\newblock {\em Frontiers in Genetics}, 7:90, 2016.

\bibitem{onn:nonlinear-discrete-monograph}
S.~Onn.
\newblock {\em Non-Linear Discrete Optimization}.
\newblock Zurich Lectures in Advanced Mathematics. European Mathematical
  Society, 2010.

\bibitem{delpiamichini}
A.~Del Pia and C.~Michini.
\newblock Short simplex paths in lattice polytopes.
\newblock {\em arXiv e-prints}, abs/1912.05712, 2019.

\bibitem{rocka}
R.T. Rockafellar.
\newblock The elementary vectors of a subspace of {$R^{N}$}.
\newblock In {\em Combinatorial {M}athematics and its {A}pplications ({P}roc.
  {C}onf., {U}niv. {N}orth {C}arolina, {C}hapel {H}ill, {N}.{C}., 1967)}, pages
  104--127. Univ. North Carolina Press, Chapel Hill, N.C., 1969.

\bibitem{Sanita18}
L.~Sanit\`a.
\newblock The diameter of the fractional matching polytope and its hardness
  implications.
\newblock In {\em FOCS 2018 (59th Annual IEEE Symposium on Foundations of
  Computer Science)}, pages 910--921, 10 2018.

\bibitem{SW2002}
A.~S. Schulz and R.~Weismantel.
\newblock The complexity of generic primal algorithms for solving general
  integer programs.
\newblock {\em Mathematics of Operations Research}, 27:681--692, 2002.

\bibitem{10.1007/3-540-60313-1_164}
A.~S. Schulz, R.~Weismantel, and G.~M. Ziegler.
\newblock 0/1-integer programming: Optimization and augmentation are
  equivalent.
\newblock In Paul Spirakis, editor, {\em Algorithms --- ESA '95}, pages
  473--483, Berlin, Heidelberg, 1995. Springer Berlin Heidelberg.

\bibitem{st96}
B.~Sturmfels.
\newblock {\em Gr\"{o}bner bases and convex polytopes}, volume~8 of {\em
  University Lecture Series}.
\newblock American Mathematical Society, Providence, RI, 1996.

\bibitem{terlaky+zhang}
T.~Terlaky and S.~Zhang.
\newblock Pivot rules for linear programming: {A} survey on recent theoretical
  developments.
\newblock {\em Annals {OR}}, 46-47(1):203--233, 1993.

\bibitem{zadeh2}
N.~Zadeh.
\newblock What is the worst case behavior of the simplex algorithm?
\newblock In {\em Polyhedral computation}, volume~48 of {\em CRM Proc. Lecture
  Notes}, pages 131--143. Amer. Math. Soc., Providence, RI, 2009.

\bibitem{Ziegler97}
G.~M. Ziegler.
\newblock Lectures on 0/1 polytopes.
\newblock In {\em {P}olytopes -- {C}ombinatorics and {C}omputation. {DMV}
  {S}eminar}, volume~29. Birkha\"user, Basel, 2000.

\bibitem{zieglerextremelps}
G.~M. Ziegler.
\newblock Typical and extremal linear programs.
\newblock In {\em The sharpest cut}, MPS/SIAM Ser. Optim., pages 217--230.
  SIAM, Philadelphia, PA, 2004.

\end{thebibliography}

\end{document}